\newtheorem{theorem}{Theorem}[section]
\newtheorem{lemma}[theorem]{Lemma}
\newtheorem{claim}[theorem]{Claim}
\newtheorem{proposition}[theorem]{Proposition}
\newtheorem{corollary}[theorem]{Corollary}
\newtheorem{remark}[theorem]{Remark}
\title{Semi-random process without replacement}
\author{Shoni Gilboa\thanks{Department of Mathematics and Computer Science, The Open University of Israel, Raanana 43107, Israel.} \and Dan Hefetz \thanks{Department of Computer Science, Ariel University, Ariel 40700, Israel. Research supported by ISF grant 822/18.}}
\begin{document}
\maketitle
\begin{abstract}
Semi-random processes involve an adaptive decision-maker, whose goal is to achieve some pre-determined objective in an online randomized environment. We introduce and study a semi-random multigraph process, which forms a no-replacement variant of the process that was introduced in~\cite{BHKPSS}.
The process starts with an empty graph on the vertex set $[n]$. 
For every positive integers $q$ and $1\leq r\leq n$, in the $((q-1)n+r)$th round of the process, the decision-maker, called \emph{Builder}, is offered the vertex $\pi_q(r)$, 
where $\pi_1, \pi_2, \ldots$ is a sequence of permutations in $S_n$, chosen independently and uniformly at random.
Builder then chooses an additional vertex (according to a strategy of his choice) and connects it by an edge to $\pi_q(r)$. 

For several natural graph properties, such as $k$-connectivity, minimum degree at least $k$, and building a given spanning graph (labeled or unlabeled), we determine the typical number of rounds Builder needs in order to construct a graph having the desired property. Along the way we introduce and analyze an urn model which may also have independent interest.
\end{abstract}

\section{Introduction}

In this paper we introduce and analyze a general semi-random multigraph process, arising from an interplay between a sequence of random choices on the one hand, and a strategy of our choice on the other. It is a no-replacement variant of the process which was proposed by Peleg Michaeli, analyzed in~\cite{BHKPSS}, and further studied in~\cite{BGHK, GKMP, BMPR, GMP}.
Denote by $S_n$ the set of permutations of the set $[n] := \{1,2, \ldots, n\}$,
and let $\pi_1, \pi_2, \ldots$ be a sequence of permutations in $S_n$, chosen independently and uniformly at random.
The process starts with an empty graph on the vertex set $[n]$. 
For every positive integer $k$, in the $k$th round of the process, the decision-maker, called \emph{Builder}, is offered the vertex $v_k := \pi_q(r)$, where $q$ and $r$ are the unique integers satisfying $(q-1)n+r=k$ and $1\leq r\leq n$.
Builder then irrevocably chooses an additional vertex $u_k$ and adds the edge $u_k v_k$ to his (multi)graph, with the possibility of creating multiple edges (in fact, we will make an effort to avoid multiple edges; allowing them is a technical aid which ensures that Builder always has a legal edge to claim). 

The algorithm that Builder uses in order to add edges throughout this process is referred to as Builder's \emph{strategy}.

Given a positive integer $n$ and a family $\mathcal F$ of labeled graphs on the vertex set $[n]$, we consider the one-player game in which Builder's goal is to build a multigraph with vertex set $[n]$ that contains, as a (spanning) subgraph, some graph from $\mathcal F$, as quickly as possible; we denote this game by $({\mathcal F}, n)_{\text{lab}}$. 
In the case that the family $\mathcal F$ consists of a single graph $G$, we will use the abbreviation $(G,n)_{\text{lab}}$  for $(\{G\},n)_{\text{lab}}$.
We also consider the one-player game in which Builder's goal is to build a multigraph with vertex set $[n]$ that contains a subgraph which is \emph{isomorphic} to some graph from $\mathcal F$, as quickly as possible; we denote this game by $({\mathcal F}, n)$. Note that 
\begin{equation}\label{eq:lab_vs_iso_game}
({\mathcal F}, n)=({\mathcal F}_{\text{iso}}, n)_{\text{lab}},
\end{equation}
where ${\mathcal F}_{\text{iso}}$ is the family of all labeled graphs on the vertex set $[n]$ which are isomorpic to some graph from $\mathcal F$. 
The general problem discussed in this paper is that of determining the typical number of rounds Builder needs in order to construct such a multigraph under optimal play. 

\paragraph*{A formal treatment.}
Suppose that Builder follows some fixed strategy $\mathcal S$. Let $\mathcal S(n,m)$ denote the resulting multigraph if Builder follows $\mathcal S$ for $m$ rounds. That is, $\mathcal S(n,m)$ is the probability space of all multigraphs with vertex set $[n]$ and with $m$ edges, where each of these edges is chosen as follows. 
For every positive integer $k$, in the $k$th round of the process, Builder is offered the vertex $v_k := \pi_q(r)$, where $q$ and $r$ are the unique integers satisfying $(q-1)n+r=k$ and $1\leq r\leq n$.
Builder then chooses the vertex $u_k$ according to $\mathcal S$, and the edge $u_k v_k$ is added to his graph. 

For the labeled game $({\mathcal F}, n)_{\text{lab}}$ and a strategy $\mathcal{S}$, let $\tau(\mathcal{S})$ denote the total number of rounds played until Builder's graph first contains some graph from $\mathcal F$, assuming he plays according to $\mathcal{S}$. In other words, $\tau(\mathcal{S})$ is the smallest integer $m$ for which the graph ${\mathcal S}(n,m)$ contains some graph from $\mathcal F$. For completeness, if no such integer $m$ exists, we define $\tau(\mathcal{S})$ to be $+\infty$. Note that $\tau(\mathcal S)$ is  a random variable. 
Let $p_{\mathcal S}$ be the non-decreasing function from the set $\mathbb N$ of non-negative integers to the interval $[0,1]$ defined by $p_{\mathcal S}(k) = {\Pr}(\tau({\mathcal S})\leq k)$ for every non-negative integer $k$.
Following~\cite{BHKPSS}, we say that $\mathcal{S}$ \emph{dominates} another strategy $\mathcal{S}'$ if $p_{\mathcal S}(k) \geq p_{\mathcal{S}'}(k)$ for every $k$. A strategy $\mathcal{S}$ is said to be \emph{optimal}, if it dominates any other strategy $\mathcal{S}'$. 
For every non-negative integer $k$, let $p_{({\mathcal F},n)_{\text{lab}}}(k)$ be the maximum of $p_{\mathcal S}(k)$, taken over all possible strategies $\mathcal{S}$ for $({\mathcal F}, n)_{\text{lab}}$.
Clearly, $p_{({\mathcal F},n)_{\text{lab}}}$ is a non-decreasing function from $\mathbb N$ to $[0,1]$; hence there exists a random variable $\tau_{\text{lab}}({\mathcal F},n)$ taking values in ${\mathbb N} \cup \{+\infty\}$ such that ${\Pr}(\tau_{\text{lab}}({\mathcal F},n)\leq k) = p_{({\mathcal F},n)_{\text{lab}}}(k)$ for every non-negative integer $k$.
Note that if there is an optimal strategy $\mathcal{S}$ for the labeled game $({\mathcal F}, n)_{\text{lab}}$, then we may take $\tau_{\text{lab}}({\mathcal F},n)$  to be $\tau(\mathcal S)$.

For the unlabeled game $({\mathcal F}, n)$ we define $\tau({\mathcal F},n)$ in an analogous manner, or by using \eqref{eq:lab_vs_iso_game}, namely 
\begin{equation}\label{eq:lab_vs_iso_tau_def}
\tau({\mathcal F},n) = \tau_{\text{lab}}({\mathcal F}_{\text{iso}},n).
\end{equation}
Since, obviously, $p_{({\mathcal F},n)}(k)\geq p_{({\mathcal F},n)_{\text{lab}}}(k)$ for every $k$, we may assume (by coupling) that 
\begin{equation}\label{eq:lab_vs_iso_tau_compare}
\tau({\mathcal F}, n) \leq \tau_{\text{lab}}({\mathcal F}, n).
\end{equation}

For a given family ${\mathcal F}$ of graphs on the vertex set $[n]$, our prime objective for the game $({\mathcal F}, n)_{\text{lab}}$ is to obtain tight upper and lower bounds on $\tau_{\text{lab}}({\mathcal F}, n)$ which hold with high probability (w.h.p.~for brevity), i.e., with probability which tends to $1$ as $n$ tends to $\infty$. 
Note that in order to prove that w.h.p. $\tau_{\text{lab}}({\mathcal F},n) \leq m$, it suffices to present a strategy $\mathcal S$ such that w.h.p. $\mathcal S(n,m)$ cotains a graph of ${\mathcal F}$. On the other hand, in order to prove that w.h.p. $\tau_{\text{lab}}({\mathcal F},n) > m$, one has to show that for any strategy $\mathcal S$, w.h.p. the graph $\mathcal S(n,m)$ does not contain any graph of ${\mathcal F}$.
Our prime objective for the unlabelled version of the game is analogous, namely, to obtain tight upper and lower bounds on $\tau({\mathcal F}, n)$ which hold with high probability.

In this paper we will establish such lower and upper bounds on $\tau_{\text{lab}}({\mathcal F}, n)$ and on $\tau({\mathcal F}, n)$ for several natural families ${\mathcal F}$ of graphs. 

The rest of the paper is organized as follows. Section~\ref{sec::prelim} introduces some notation and a technical result which will be used later on. Section~\ref{sec::general} contains two very simple but very general results; these will determine our focus for the rest of the paper. In Section~\ref{sec::urn} we introduce and analyze an urn model; it will be used in later sections, but may also have independent interest. In section~\ref{sec::MinDeg} we study minimum degree games. In Section~\ref{sec::PM} we study the construction of labeled and unlabeled perfect matchings. In Section~\ref{sec::oddRegularGraphs} we study the construction of labeled regular graphs. In Section~\ref{sec::trees} we study the construction of labeled and unlabeled trees. In Section~\ref{sec::EdgeCon} we study edge-connectivity games. Finally, in Section~\ref{sec::openProbs}, we suggest several open problems for future study.

\section{Notation and Preliminaries} \label{sec::prelim}

For every positive integer $m$, let
$$
H_m = \sum_{k=1}^m \frac{1}{k}
$$
denote the $m$th harmonic number. For all positive integers $\ell \leq m$, let
$$
H_{\ell,m} = H_m - H_{\ell} = \sum_{k=\ell+1}^m \frac{1}{k}.
$$
For all positive integers $\ell \leq m$ it clearly holds that
\begin{equation}\label{eq:harmonic1}
\frac{m-\ell}{m} \leq H_{\ell,m} \leq \frac{m-\ell}{\ell+1}.
\end{equation}
Moreover, since $\frac{1}{x+1} < \ln(x+1) - \ln x < \frac{1}{x}$ for every $x > 0$, for all positive integers $ \ell \leq m$ it holds that
\begin{equation}\label{eq:harmonic2}
\ln \left(\frac{m+1}{\ell+1} \right) = \ln(m+1) - \ln(\ell+1) \leq H_{\ell,m} \leq \ln m - \ln\ell = \ln \left(\frac{m}{\ell} \right).
\end{equation}

The following simple technical claim will be useful in Sections~\ref{sec::PM} and~\ref{sec::oddRegularGraphs}. 

\begin{claim} \label{claim:match_comp}
Let $n$ be an even positive integer. For $0 \leq r \leq n/2$ let 
$$
p_r := \Pr(\{\pi(1), \pi(2), \ldots, \pi(n-r)\} \cap \{2i-1,2i\} \neq \emptyset \text{ for every } 1 \leq i \leq n/2\}
$$
where $\pi \in S_n$ is chosen uniformly at random. Then
$$
p_r = \begin{cases}
1 - o(1) & r = o(\sqrt{n})\\
o(1) & r = \omega(\sqrt{n}).
\end{cases}
$$
\end{claim}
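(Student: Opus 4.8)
The plan is to translate the event into a statement about a uniformly random $r$-element subset of $[n]$, to derive an exact product formula for $p_r$, and then to read off the two asymptotic regimes from elementary estimates. First, observe that for a uniformly random $\pi \in S_n$ the set $\{\pi(1), \ldots, \pi(n-r)\}$ is a uniformly random $(n-r)$-element subset of $[n]$; equivalently, its complement $T := [n] \setminus \{\pi(1), \ldots, \pi(n-r)\}$ is a uniformly random $r$-element subset of $[n]$. The event in the definition of $p_r$, namely that every pair $\{2i-1, 2i\}$ meets $\{\pi(1), \ldots, \pi(n-r)\}$, is exactly the event that no pair $\{2i-1, 2i\}$ is contained in $T$; that is, $T$ is an independent set in the perfect matching $M := \big\{\{1,2\}, \{3,4\}, \ldots, \{n-1,n\}\big\}$.

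To compute $p_r$ I would generate $T$ by choosing its $r$ elements one at a time, uniformly at random and without replacement. Suppose $j$ elements have been chosen and no edge of $M$ has yet been completed; then these $j$ elements lie in $j$ distinct edges of $M$, so among the $n-j$ elements not yet chosen there are exactly $j$ "forbidden" ones (the $M$-partners of the already-chosen elements) and $n - 2j$ "safe" ones. Hence the conditional probability that the $(j+1)$st chosen element again avoids completing an edge of $M$ equals $\frac{n-2j}{n-j}$, and multiplying over $j = 0, 1, \ldots, r-1$ gives
$$
p_r \;=\; \prod_{j=0}^{r-1} \frac{n-2j}{n-j} \;=\; \prod_{j=0}^{r-1} \left(1 - \frac{j}{n-j}\right)
$$
(equivalently, $p_r = 2^r \binom{n/2}{r} / \binom{n}{r}$, which can also be obtained by a direct count). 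Since $0 \le j \le r-1 < r \le n/2$ throughout, every factor lies in $(0,1]$ and, in particular, $n - j \ge n/2$.

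It remains to extract the asymptotics. For $r = o(\sqrt n)$, the inequality $\prod_j (1 - x_j) \ge 1 - \sum_j x_j$ (valid for $x_j \in [0,1]$) together with $n - j \ge n/2$ gives
$$
1 \;\ge\; p_r \;\ge\; 1 - \sum_{j=0}^{r-1} \frac{j}{n-j} \;\ge\; 1 - \frac{2}{n}\binom{r}{2} \;\ge\; 1 - \frac{r^2}{n} \;=\; 1 - o(1),
$$
so $p_r = 1 - o(1)$. For $r = \omega(\sqrt n)$, the inequality $1 - x \le e^{-x}$ gives
$$
p_r \;\le\; \exp\!\left(-\sum_{j=0}^{r-1} \frac{j}{n-j}\right) \;\le\; \exp\!\left(-\frac{1}{n}\binom{r}{2}\right) \;=\; \exp\!\left(-\frac{r(r-1)}{2n}\right) \;\longrightarrow\; 0 .
$$
There is no genuine obstacle in this argument; the one point needing a little care is that the elementary bounds on the factors $1 - \frac{j}{n-j}$ must be applied uniformly over all $j \le r - 1 \le n/2 - 1$, which is precisely what $n - j \ge n/2$ guarantees. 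The real content is simply spotting the product identity for $p_r$.
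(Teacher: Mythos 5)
Your proof is correct and takes essentially the same approach as the paper: both reduce to the exact product formula $p_r = \prod_{i=0}^{r-1}\bigl(1 - \tfrac{i}{n-i}\bigr)$ (you via sequential conditioning, the paper via the ratio $\binom{n/2}{r}2^r/\binom{n}{r}$, which are the same computation) and then estimate the product. The only cosmetic difference is in the lower-bound estimate, where you invoke the Weierstrass inequality $\prod(1-x_j)\ge 1-\sum x_j$ while the paper writes $p_r \ge (1-o(1))\exp\bigl(-\sum_{i=0}^{r-1}\tfrac{i}{n-i}\bigr)$; both are standard and correct.
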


\begin{proof}
Note that for a permutation $\pi\in S_n$ it holds that $\{\pi(1), \pi(2), \ldots, \pi(n-r)\} \cap \{2i-1,2i\} \neq \emptyset$ for every $1\leq i\leq n/2$ if and only if $|\{\pi(n-r+1), \ldots, \pi(n)\} \cap \{2i-1,2i\}| \leq 1$ for every $1\leq i\leq n/2$. In order to count the permutations which satisfy the latter, we choose $r$ of the pairs $\{\{2i-1,2i\} : 1\leq i\leq n/2\}$ and then one element from each chosen pair; the $r$ chosen elements form the image of $\{n - r + 1, \ldots, n\}$ and the remaining $n-r$ elements form the image of $\{1, \ldots, n-r\}$. It follows that
\begin{align*}
p_r &= \frac{\binom{n/2}{r} 2^r (n-r)! r!}{n!} 
= \frac{\frac{n}{2} (\frac{n}{2} - 1) \cdots (\frac{n}{2} - r + 1) 2^r}{n (n-1) \cdots (n-r+1)} \\
&= \frac{n (n - 2) \cdots (n - 2r + 2)}{n (n-1) \cdots (n-r+1)} = \prod_{i=0}^{r-1} \left(1 - \frac{i}{n-i} \right).
\end{align*} 
Hence, if $r = \omega(\sqrt{n})$, then
$$
p_r = \prod_{i=0}^{r-1} \left(1 - \frac{i}{n-i} \right) \leq \exp \left\{- \sum_{i=0}^{r-1} \frac{i}{n-i} \right\}\leq \exp\left\{-\frac{(r-1)r/2}{n}\right\} = o(1),
$$
and if $r = o(\sqrt{n})$, then
\begin{align*}
p_r &= \prod_{i=0}^{r-1} \left(1 - \frac{i}{n-i} \right)=\prod_{i=0}^{r-1}\frac{1}{1 + \frac{i}{n-2i} }\geq \exp\left\{- \sum_{i=0}^{r-1} \frac{i}{n-2i}\right\}\\
&\geq \exp \left\{- \frac{(r-1)r/2}{n-2(r-1)} \right\}= 1 - o(1).
\qedhere\end{align*} 
\end{proof}

\section{General bounds} \label{sec::general}

The following two results are very simple but very widely applicable. Together with their many corollaries they form a good indication of what is interesting to prove in relation to the no-replacement semi-random process. 

\begin{proposition} \label{prop::orientationUB}
Let $G$ be a graph on the vertex set $[n]$. If there exists an orientation $D$ of the edges of $G$ such that $d_D^+(u) \leq d$ for every $u \in [n]$, then $\tau(G, n) \leq \tau_{\text{lab}}(G, n) \leq d n$.
\end{proposition}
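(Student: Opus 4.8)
The plan is straightforward: the first inequality $\tau(G,n)\le\tau_{\text{lab}}(G,n)$ is nothing but \eqref{eq:lab_vs_iso_tau_compare}, so the whole content is the bound $\tau_{\text{lab}}(G,n)\le dn$. To get this I would exhibit one explicit Builder strategy $\mathcal S$ for the game $(G,n)_{\text{lab}}$ which, \emph{with probability $1$} — that is, for every possible choice of the random permutations $\pi_1,\pi_2,\ldots$ — produces a multigraph $\mathcal S(n,dn)$ that contains $G$ as a spanning subgraph. This is enough, since it forces $p_{\mathcal S}(dn)=1$ and hence $p_{(G,n)_{\text{lab}}}(dn)=1$, which by definition means $\tau_{\text{lab}}(G,n)\le dn$.

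Next I would set up the strategy. Partition the rounds into consecutive blocks $B_1,B_2,\ldots$, where $B_j=\{(j-1)n+1,\ldots,jn\}$; the point is that for $k\in B_j$ one has $v_k=\pi_j\bigl(k-(j-1)n\bigr)$, so as $k$ ranges over $B_j$ the offered vertex $v_k$ ranges over all of $[n]$, hitting each vertex exactly once. For each vertex $u\in[n]$ fix, once and for all, an arbitrary enumeration $w^u_1,\ldots,w^u_{d_D^+(u)}$ of the out-neighbours of $u$ in the orientation $D$; since $d_D^+(u)\le d$, this list has length at most $d$. Builder's strategy during the blocks $B_1,\ldots,B_d$ is then: in block $B_j$, upon being offered a vertex $u$, add the edge $uw^u_j$ if $d_D^+(u)\ge j$, and otherwise (no such out-neighbour exists) add an arbitrary edge incident to $u$, merely to make a formally legal move. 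After round $dn$ Builder may play arbitrarily.

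Finally I would check correctness. Take any edge $e\in E(G)$ and let $u\to v$ be its orientation in $D$; then $v=w^u_j$ for some index $1\le j\le d_D^+(u)\le d$. Within block $B_j$ the vertex $u$ is offered in exactly one round, and in that round Builder adds precisely the edge $uw^u_j=uv=e$. Hence, after the $d$ blocks $B_1,\ldots,B_d$ — together exactly $dn$ rounds — Builder's multigraph contains every edge of $G$, and therefore contains $G$ as a spanning subgraph, regardless of the permutations. I do not expect a genuine obstacle here (the proposition is, as the paper says, very simple); the only points demanding a line of care are the bookkeeping that each vertex of $[n]$ is offered exactly once per block and that the budget of $dn$ rounds accommodates all $d$ blocks.
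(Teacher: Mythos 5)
Your proof is correct and takes essentially the same approach as the paper: orient $G$ with out-degree at most $d$, and over $d$ passes through $[n]$ have Builder claim, for each offered vertex, one of its out-edges. The only cosmetic difference is that you index the chosen out-neighbour by the block number $j$, while the paper's strategy picks the smallest index $k$ for which the edge is still free — both descriptions produce the same set of claimed edges.
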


\begin{proof}
Let $D$ be an orientation of the edges of $G$ such that $d_D^+(u) \leq d$ for every $u \in [n]$. For every $u \in [n]$, let $\tilde{u}^1, \ldots, \tilde{u}^{d_D^+(u)}$ be an arbitrary ordering of the vertices of $N_D^+(u)$. For every $1 \leq i \leq d n$, let $u_i$ denote the vertex Builder is offered in the $i$th round. In the $i$th round, Builder claims the edge $u_i \tilde{u}_i^k$, where $k \leq d_D^+(u_i)$ is the smallest integer for which $u_i \tilde{u}_i^k$ is free; if no such $k$ exists, then Builder claims an arbitrary edge which is incident with $u_i$. Since $d_D^+(u) \leq d$ for every $u \in [n]$ and since, during the first $d n$ rounds, every vertex of $[n]$ is offered precisely $d$ times, it readily follows from the description of Builder's strategy that, after $d n$ rounds, Builder's graph contains $G$ as a subgraph.       
\end{proof}

\begin{proposition} \label{prop::orientationLB}
Let $G$ be a graph on the vertex set $[n]$. Let $d$ be the largest integer such that in every orientation of the edges of $G$ there exists a vertex of out-degree at least $d$. Then
$$
\tau_{\text{lab}}(G, n)\geq \tau(G, n) \geq \max \{(d-1) n + 1, e(G)\}.
$$
\end{proposition}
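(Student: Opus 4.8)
The bound $\tau(G,n) \ge e(G)$ is immediate: Builder adds exactly one edge per round, so at least $e(G)$ rounds are needed before his graph can contain $G$ (which has $e(G)$ edges) as a subgraph. The substance is in the bound $\tau(G,n) \ge (d-1)n + 1$, and by \eqref{eq:lab_vs_iso_tau_compare} it suffices to prove it for the unlabeled game, which by \eqref{eq:lab_vs_iso_tau_def} means proving $\tau_{\text{lab}}({\mathcal F}_{\text{iso}}, n) \ge (d-1)n+1$ where ${\mathcal F}_{\text{iso}}$ is the family of labeled copies of $G$. So I would fix an arbitrary strategy $\mathcal S$ and show that after $m := (d-1)n$ rounds, the graph ${\mathcal S}(n,m)$ (deterministically, not just w.h.p.) fails to contain a copy of $G$.

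The key combinatorial observation is this: in the first $m = (d-1)n$ rounds, each vertex of $[n]$ is offered exactly $d-1$ times. In each round where vertex $v$ is offered, Builder adds an edge incident to $v$ of the form $u v$; orient this edge \emph{away from} $v$. Under this orientation of ${\mathcal S}(n,m)$, every vertex has out-degree exactly $d-1$ (each vertex contributes one out-edge for each of its $d-1$ appearances as the offered vertex; note this works even with multiple edges, since we are bounding out-degrees). Now suppose for contradiction that ${\mathcal S}(n,m)$ contains a subgraph isomorphic to $G$, say via an embedding $\varphi: V(G) \to [n]$. Restricting the above orientation to the image $\varphi(G)$ gives an orientation of a copy of $G$ in which every vertex has out-degree at most $d-1$ — hence, transporting back through $\varphi$, an orientation of $G$ itself in which every vertex has out-degree at most $d-1$. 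This contradicts the defining property of $d$ (that in \emph{every} orientation of $G$ some vertex has out-degree at least $d$). Therefore ${\mathcal S}(n,m)$ does not contain a copy of $G$, so $\tau(\mathcal S) > m$, and since $\mathcal S$ was arbitrary, $\tau(G,n) \ge m+1 = (d-1)n+1$.

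The only point requiring a little care is the bookkeeping: when we restrict an orientation of ${\mathcal S}(n,m)$ to the edge set of an embedded copy of $G$, we must make sure we are keeping, for each edge of $G$, exactly one oriented edge (in case ${\mathcal S}(n,m)$ has multiple edges between two vertices of $\varphi(G)$, we just pick one), so that out-degrees in the copy of $G$ are bounded by out-degrees in ${\mathcal S}(n,m)$. This is routine. I do not expect any genuine obstacle here; the proposition is essentially a direct translation of the orientation/out-degree duality into the language of the process, mirroring the upper bound in Proposition \ref{prop::orientationUB}. Finally, combining $\tau(G,n) \ge (d-1)n+1$ and $\tau(G,n) \ge e(G)$ with \eqref{eq:lab_vs_iso_tau_compare} yields the full claimed chain of inequalities.
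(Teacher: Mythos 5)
Your proof is correct and takes essentially the same approach as the paper: orient each claimed edge from the offered vertex to the chosen vertex, observe that after $(d-1)n$ rounds every vertex has out-degree at most $d-1$, and derive a contradiction with the defining property of $d$. Your version is a bit more explicit about the embedding and multigraph bookkeeping, but the underlying argument is identical.
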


\begin{proof}
Trivially, $\tau(G, n) \geq e(G)$; we will prove that $\tau(G, n) \geq (d-1) n + 1$ as well. Suppose for a contradiction that there exist permutations $\pi_1, \ldots, \pi_{d-1} \in S_n$ and a strategy $\mathcal{S}$ such that, if the vertices are offered according to $\pi_1, \ldots, \pi_{d-1}$ and Builder follows $\mathcal{S}$, then he builds a copy of $G$ within $(d-1) n$ rounds. Orient each edge Builder claims from the vertex he was offered to the vertex he chose to connect it to. Observe that the maximum out-degree in Builder's graph after $(d-1) n$ rounds is $d-1$ and thus, by the definition of $d$, his graph cannot admit a copy of $G$, contrary to our assumption.
\end{proof}

\begin{remark}
It is well-known (see Lemma 3.1 in~\cite{AT}) that a graph $G$ admits an orientation in which the out-degree of every vertex is at most $d$ if and only if $d \geq L(G)$, where 
$$
L(G) := \max \left\{\frac{e(H)}{v(H)} : \emptyset \neq H \subseteq G \right\}.
$$
\end{remark}

As noted above, despite being very simple, Propositions~\ref{prop::orientationUB} and~\ref{prop::orientationLB} have many useful, essentially immediate, corollaries which cover the construction of many graph families.

\begin{corollary} \label{cor::evenRegular}
Let $G$ be a $2d$-regular graph on the vertex set $[n]$. Then
$\tau(G, n) = \tau_{\text{lab}}(G, n) = d n$.
\end{corollary}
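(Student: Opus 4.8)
The plan is to derive both bounds directly from Propositions~\ref{prop::orientationUB} and~\ref{prop::orientationLB}, so that the only real content is producing a suitable orientation for the upper bound and counting edges for the lower bound.

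For the upper bound, I would argue that every $2d$-regular graph $G$ admits an orientation $D$ in which $d_D^+(u) = d$ for every $u \in [n]$. Indeed, each connected component of $G$ has all degrees even, hence is Eulerian; orienting the edges of each component consistently along an Eulerian circuit makes the out-degree of every vertex equal to half its degree, i.e.\ exactly $d$. (When $d=0$ there is nothing to prove.) In particular $d_D^+(u) \leq d$ for all $u$, so Proposition~\ref{prop::orientationUB} gives $\tau(G,n) \leq \tau_{\text{lab}}(G,n) \leq dn$.

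For the lower bound, note first that $e(G) = \frac{1}{2} \cdot 2d \cdot n = dn$. Now consider the quantity $d'$ from Proposition~\ref{prop::orientationLB}, namely the largest integer such that every orientation of $G$ has a vertex of out-degree at least $d'$. In any orientation of $G$ the out-degrees sum to $e(G) = dn$, so if every vertex had out-degree at most $d-1$ the sum would be at most $(d-1)n < dn$, a contradiction; hence $d' \geq d$. Proposition~\ref{prop::orientationLB} then yields $\tau_{\text{lab}}(G,n) \geq \tau(G,n) \geq \max\{(d'-1)n+1,\, e(G)\} \geq \max\{(d-1)n+1,\, dn\} = dn$. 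Combining the two bounds gives $\tau(G,n) = \tau_{\text{lab}}(G,n) = dn$.

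There is no serious obstacle here; the only point requiring a moment's care is the existence of the Euler-circuit orientation, and in particular the observation that it works component-by-component so that connectivity of $G$ need not be assumed. Everything else is the edge count $e(G) = dn$ together with the fact that $dn$ simultaneously matches the $e(G)$ term in the lower bound of Proposition~\ref{prop::orientationLB} and the $dn$ term in the upper bound of Proposition~\ref{prop::orientationUB}, which is what makes the two estimates meet exactly.
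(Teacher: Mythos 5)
Your argument is correct and is exactly the intended derivation: the Eulerian orientation of each component gives the upper bound $dn$ via Proposition~\ref{prop::orientationUB}, and the edge count $e(G)=dn$ gives the matching lower bound via Proposition~\ref{prop::orientationLB}. The only extraneous part is the verification that $d' \geq d$ (in the notation of Proposition~\ref{prop::orientationLB}): since $(d'-1)n+1 < dn$ for $d \geq 1$, that term is strictly dominated by $e(G)=dn$, so the lower bound already follows from the edge count alone.
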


Let ${\mathcal H} = {\mathcal H}(n)$ denote the family of Hamiltonian cycles of $K_n$. It follows by Corollary~\ref{cor::evenRegular} that $\tau({\mathcal H}, n) = n$. For the original `with replacement' model, it was shown in~\cite{BHKPSS} that w.h.p. $\ln 2+\ln(1+\ln 2))-o(1) \leq \tau({\mathcal H}, n) /n \leq 3+o(1)$. Both bounds were subsequently improved in~\cite{GKMP}, where it was shown that w.h.p. $\ln 2+\ln(1+\ln 2))+10^{-8}-o(1) \leq \tau({\mathcal H}, n) /n \leq 2.07+\frac{4}{e^2}+o(1)$. Since $\ln 2+\ln(1+\ln 2))>1$, it follows that (for sufficiently large $n$) a Hamiltonian graph can be built in the `no replacement' model faster than in the `with replacement' model. We remark that there are graphs which can be built in the `with replacement' model faster than in the `no replacement' model. For example, \cite[Theorem 1.10]{BHKPSS} implies that  $\tau(K_4,n)=o(n)$ in the `with replacement' model, whereas it follows by Proposition~\ref{prop::orientationLB} that $\tau(K_4,n) > n$ in the `no replacement' model. It is not hard to augment this example to obtain spanning graphs which may be built in the `with replacement' model faster than in the `no replacement' model.

\begin{corollary}\label{cor:trees}
Let $G$ be a $d$-degenerate graph on the vertex set $[n]$. Then $e(G) \leq \tau(G, n) \leq \tau_{\textrm{lab}}(G, n) \leq d n$. In particular, in the special case where $e(G) = d n$, it holds that $\tau(G, n) = \tau_{\textrm{lab}}(G, n) = d n$. Another special case is when $T$ is a tree, and then $n-1 \leq \tau(T, n) \leq \tau_{\textrm{lab}}(T, n) \leq n$.
\end{corollary}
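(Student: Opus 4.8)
The plan is to derive everything from Propositions~\ref{prop::orientationUB} and~\ref{prop::orientationLB} together with the general comparison~\eqref{eq:lab_vs_iso_tau_compare}, so that the only real input is one standard fact about degeneracy. That fact is the orientation characterization of $d$-degeneracy: if $G$ is $d$-degenerate, there is an ordering $v_1, \ldots, v_n$ of $[n]$ in which every $v_i$ has at most $d$ neighbours among $\{v_1, \ldots, v_{i-1}\}$ (obtained by repeatedly peeling off a vertex of minimum degree), and orienting each edge of $G$ from its later endpoint to its earlier endpoint in this order produces an orientation $D$ with $d_D^+(u) \le d$ for every $u \in [n]$.

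First I would plug this orientation $D$ into Proposition~\ref{prop::orientationUB} to obtain $\tau_{\textrm{lab}}(G,n) \le d n$, and then invoke~\eqref{eq:lab_vs_iso_tau_compare} for $\tau(G,n) \le \tau_{\textrm{lab}}(G,n)$. For the lower bound I would use the trivial half of Proposition~\ref{prop::orientationLB}, namely $\tau(G,n) \ge e(G)$ (Builder adds exactly one edge per round, so he needs at least $e(G)$ of them). Concatenating these gives the claimed chain $e(G) \le \tau(G,n) \le \tau_{\textrm{lab}}(G,n) \le d n$.

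The two special cases then fall out immediately. If $e(G) = d n$, the outer terms of the chain coincide, forcing $\tau(G,n) = \tau_{\textrm{lab}}(G,n) = d n$. If $T$ is a tree, it is $1$-degenerate (repeatedly delete a leaf), so applying the above with $d = 1$ yields $\tau_{\textrm{lab}}(T,n) \le n$, while $e(T) = n-1$ supplies the matching lower bound $\tau(T,n) \ge n-1$.

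Each step is a one-line deduction, so I do not expect a genuine obstacle; the only point requiring care is to invoke the \emph{out}-degree version of the degeneracy orientation (bounding out-degrees, not in-degrees, by $d$), since that is precisely the hypothesis of Proposition~\ref{prop::orientationUB}.
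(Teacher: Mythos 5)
Your proof is correct and follows precisely the route the paper intends: the corollary is stated right after Propositions~\ref{prop::orientationUB} and~\ref{prop::orientationLB} as one of their ``essentially immediate'' consequences, and the only non-trivial ingredient is exactly the orientation characterization of $d$-degeneracy you invoke. The two special cases then drop out just as you describe.
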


\begin{corollary}
Let $G$ be an arbitrary balanced \footnote{a graph $G$ is balanced if $e(G)/v(G) = \max \{e(H)/v(H) : \emptyset \neq H \subseteq G\}$.} graph with $m$ edges on the vertex set $[n]$. Then $m \leq \tau(G, n) \leq \tau_{\textrm{lab}}(G, n) \leq \lceil m/n \rceil n$. In particular, if $m/n$ is an integer, then $\tau(G,n) = \tau_{\textrm{lab}}(G,n) = m$.  
\end{corollary}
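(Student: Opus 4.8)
The plan is to derive the statement directly from Propositions~\ref{prop::orientationUB} and~\ref{prop::orientationLB} together with the orientation characterisation recorded in the remark, so that no new work beyond assembling these ingredients is needed. First I would dispatch the lower bound: each round of the process adds exactly one edge to Builder's multigraph, so after fewer than $m = e(G)$ rounds his graph has fewer than $m$ edges and cannot contain $G$; hence $\tau(G,n) \geq m$. The middle inequality $\tau(G,n) \leq \tau_{\textrm{lab}}(G,n)$ is exactly~\eqref{eq:lab_vs_iso_tau_compare}.

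For the upper bound I would use the fact (Lemma 3.1 of~\cite{AT}, quoted in the remark) that $G$ admits an orientation in which every out-degree is at most $d$ if and only if $d \geq L(G) = \max\{e(H)/v(H) : \emptyset \neq H \subseteq G\}$. Here the only point requiring a moment's care is to observe that since $G$ is balanced the maximum defining $L(G)$ is attained by $G$ itself, so $L(G) = e(G)/v(G) = m/n$. Consequently $d := \lceil m/n \rceil$ satisfies $d \geq m/n = L(G)$, so $G$ has an orientation $D$ with $d_D^+(u) \leq \lceil m/n \rceil$ for every $u \in [n]$. Feeding this orientation into Proposition~\ref{prop::orientationUB} gives $\tau_{\textrm{lab}}(G,n) \leq \lceil m/n \rceil\, n$, which completes the chain $m \leq \tau(G,n) \leq \tau_{\textrm{lab}}(G,n) \leq \lceil m/n \rceil\, n$.

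Finally, for the ``in particular'' clause: when $m/n$ is an integer we have $\lceil m/n \rceil\, n = (m/n)\, n = m$, so the displayed chain collapses to $m \leq \tau(G,n) \leq \tau_{\textrm{lab}}(G,n) \leq m$, forcing equality throughout. There is essentially no obstacle in this argument; the only thing to get right is the identification $L(G) = m/n$ from the definition of ``balanced'' and the trivial check that $\lceil m/n\rceil$ is an admissible integer value of $d$ for the remark.
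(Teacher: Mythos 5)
Your proof is correct and follows exactly the route the paper intends: the corollary is stated without proof immediately after the remark about $L(G)$, precisely because it is meant to be read off from Proposition~\ref{prop::orientationUB} together with the Alon--Tarsi orientation criterion, using that $L(G)=m/n$ for a balanced $G$ and that $\lceil m/n\rceil \geq L(G)$; the lower bound $m\leq\tau(G,n)$ is the trivial edge-count observation already noted in Proposition~\ref{prop::orientationLB}. Nothing to add.
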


\begin{corollary}
Let $G \sim G(n,p)$, where $p = p(n) \geq \ln n/n$ and let $f : \mathbb{N} \to \mathbb{N}$ be a function satisfying $f(n) = \omega(n \sqrt{p})$. Then w.h.p.
$$
n^2 p/2 - f(n) \leq \tau(G, n) \leq \tau_{\textrm{lab}}(G, n) \leq n^2 p/2 + \sqrt{n^3 p \ln n} + n.
$$ 
\end{corollary}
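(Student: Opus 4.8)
The plan is to handle the lower and upper bounds separately. For the lower bound, recall from the proof of Proposition~\ref{prop::orientationLB} that always $\tau(G,n) \geq e(G)$, so it suffices to show that w.h.p.\ $e(G) \geq n^2p/2 - f(n)$. Now $e(G) \sim \mathrm{Bin}\!\left(\binom{n}{2}, p\right)$ has mean $\binom{n}{2}p = n^2p/2 - np/2$ and variance at most $\binom{n}{2}p \leq n^2p/2$, hence standard deviation $O(n\sqrt{p})$; since $f(n) = \omega(n\sqrt{p})$, Chebyshev's inequality gives that w.h.p.\ $e(G) \geq \binom{n}{2}p - f(n)/2$, and as $np/2 \leq \tfrac12 n\sqrt{p} = o(f(n))$ we conclude $\tau(G,n) \geq e(G) \geq n^2p/2 - f(n)$ w.h.p. (This part uses nothing about $p$ beyond $f(n) = \omega(n\sqrt p)$.)

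For the upper bound it suffices, by~\eqref{eq:lab_vs_iso_tau_compare}, to bound $\tau_{\mathrm{lab}}(G,n)$. By the remark following Proposition~\ref{prop::orientationLB}, $G$ admits an orientation in which every out-degree is at most $\lceil L(G)\rceil$, where $L(G) = \max\{e(H)/v(H): \emptyset \neq H \subseteq G\}$, so Proposition~\ref{prop::orientationUB} gives $\tau_{\mathrm{lab}}(G,n) \leq \lceil L(G)\rceil n \leq (L(G)+1)n$. Setting $\kappa := np/2 + \sqrt{np\ln n}$ and noting that $(\kappa+1)n = n^2p/2 + n\sqrt{np\ln n} + n = n^2p/2 + \sqrt{n^3 p\ln n} + n$, it is enough to prove that w.h.p.\ $L(G) \leq \kappa$, i.e.\ that w.h.p.\ $e(G[S]) \leq |S|\kappa$ for every nonempty $S \subseteq [n]$.

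I would prove this by a union bound over all nonempty $S \subseteq [n]$, split according to $s := |S|$. If $s-1 \leq 2\kappa$ then $e(G[S]) \leq \binom{s}{2} = \tfrac12 s(s-1) \leq s\kappa$ holds deterministically, so assume $s-1 > 2\kappa$. Then $e(G[S]) \sim \mathrm{Bin}\!\left(\binom{s}{2}, p\right)$ has mean $\mu_s = \binom{s}{2}p < \tfrac12 s^2 p \leq \tfrac12 snp$ (using $s \leq n$), so the threshold $s\kappa$ exceeds $\mu_s$ by at least $s\sqrt{np\ln n}$; moreover $\sqrt{np\ln n} \geq (1-o(1))\ln n$ by the hypothesis on $p$. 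For $s \leq \sqrt{n}$, using $\kappa \geq \sqrt{np\ln n}$ and $p \leq 1$ one checks that $e\mu_s/(s\kappa) \leq e\sqrt{p}/(2\sqrt{\ln n}) \leq e/(2\sqrt{\ln n})$, so the multiplicative Chernoff bound yields $\Pr(e(G[S]) \geq s\kappa) \leq \big(e/(2\sqrt{\ln n})\big)^{s\kappa}$, and multiplying by $\binom{n}{s} \leq n^{s}$ and summing over these $s$ leaves $o(1)$ (here $\kappa \geq (1-o(1))\ln n$ makes the exponent beat $s\ln n$). For $s > \sqrt{n}$, I would use the Bernstein inequality: with $t := s\kappa - \mu_s \geq s\sqrt{np\ln n}$ and $\mu_s < \tfrac12 s^2 p$, one has $t^2/\mu_s > 2n\ln n$ and $3t \geq 3s\sqrt{np\ln n}$, whence $\Pr(e(G[S]) \geq s\kappa) \leq \exp\!\big(-\tfrac14\min(t^2/\mu_s,\,3t)\big) \leq \exp\!\big(-(1-o(1))\min(\tfrac12 n\ln n,\, \tfrac34 s\ln n)\big)$. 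For $\sqrt{n} < s \leq \tfrac23 n$ this is at most $\exp\!\big(-(1-o(1))\tfrac34 s\ln n\big)$, which comfortably beats $\binom{n}{s} \leq (en/s)^{s} \leq \exp\!\big(s(1+\tfrac12\ln n)\big)$; for $\tfrac23 n < s \leq n$ it is at most $\exp\!\big(-(1-o(1))\tfrac12 n\ln n\big)$, which beats $\binom{n}{s} \leq 2^{n}$. Summing over all $s$ again gives $o(1)$, so w.h.p.\ $L(G) \leq \kappa$.

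The step I expect to be the main obstacle is this union bound over vertex subsets. The difficulty is that $\binom{n}{s}$ can be as large as $2^{n}$, so for the bulk of the range one needs a Chernoff exponent of order $n\ln n$, whereas for intermediate $s$ the threshold $s\kappa$ exceeds $\mu_s$ only by a bounded (or slowly growing) factor, so the crude multiplicative bound $(e\mu_s/(s\kappa))^{s\kappa}$ is too weak on its own and must be paired with the sharper estimate $\binom{n}{s} \leq (en/s)^{s}$; choosing the split points and constants so that every $s \in \{1,\dots,n\}$ is covered is where the care is needed. Everything else is routine, and the hypothesis $p \geq (1+o(1))\ln n/n$ enters only to guarantee $\sqrt{np\ln n} \geq (1-o(1))\ln n$, which is precisely what makes the Chernoff exponents dominate $\ln\binom{n}{s}$.
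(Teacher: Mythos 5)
Your lower bound is exactly the paper's (it follows from $\tau(G,n) \geq e(G)$ together with the concentration of $e(G)\sim \mathrm{Bin}(\binom{n}{2},p)$, which is where the hypothesis $f(n)=\omega(n\sqrt p)$ is used), but your upper bound takes a genuinely different and more laborious route. The paper does not touch $L(G)$ at all: it uses the standard fact that w.h.p.\ $d_G(u)\leq np+2\sqrt{np\ln n}$ simultaneously for every $u$ (a union bound over just $n$ vertices, each $d_G(u)\sim\mathrm{Bin}(n-1,p)$), and then invokes the elementary observation that \emph{any} graph admits an orientation with $d^+_D(u)\leq\lceil d_G(u)/2\rceil$ for every $u$ (by e.g.\ adding a dummy vertex joined to all odd-degree vertices and taking an Eulerian orientation). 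This gives $d^+_D(u)\leq np/2+\sqrt{np\ln n}+1$ directly and Proposition~\ref{prop::orientationUB} finishes. Your route instead passes through the Alon--Tarsi/Hakimi characterization of orientability in terms of $L(G)=\max_{\emptyset\neq H\subseteq G}e(H)/v(H)$, and then proves w.h.p.\ $L(G)\leq np/2+\sqrt{np\ln n}$ by a union bound over all $2^n$ vertex subsets, requiring the two-regime Chernoff/Bernstein analysis you describe. Both are valid and yield the same bound; the paper's version is considerably lighter because it replaces a union bound over exponentially many events with a union bound over $n$ events, which in turn only needs concentration of a single degree. Your argument does buy something, however: it shows the stronger fact that the \emph{fractional} arboricity-type parameter $L(G)$ is itself concentrated at $np/2(1+o(1))$, which the paper's proof does not establish (the maximum half-degree is a priori only an upper bound for $L(G)$). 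The calculations you outline are essentially sound: the deterministic case $s-1\leq 2\kappa$, the multiplicative Chernoff regime for small $s$, and the Bernstein regime for large $s$ do cover all $s$, and the key inequality $\sqrt{np\ln n}\geq(1-o(1))\ln n$ coming from $p\geq(1+o(1))\ln n/n$ is exactly what makes the exponents dominate $\ln\binom{n}{s}$.
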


\begin{proof}
Since $p \geq (1 + o(1)) \ln n/n$, it is well-known (see, e.g.,~\cite{Bollobas, FK, JLR}) that w.h.p. $e(G) \geq n^2 p/2 - f(n)$ (indeed, by Chernoff's bound, $\Pr\left(e(G) < n^2p/2 - f(n)\right) < e^{- f(n)^2/(n^2 p)} = o(1)$) and that w.h.p. $d_G(u) \leq n p + 2 \sqrt{n p \ln n}$ holds for every $u \in V(G)$ (indeed, for every $u \in V(G)$, it follows by Chernoff's bound that $\Pr(d_G(u) > np + 2\sqrt{np \ln n}) = o(1/n)$ and hence $\Pr(\exists u\in V(G): d_G(u) > np+2\sqrt{np\ln n})=o(1)$). It is then easy to find an orientation $D$ of the edges of $G$ such that $d^+_D(u) \leq \lceil d_G(u)/2 \rceil \leq n p/2 + \sqrt{n p \ln n} + 1$ holds for every $u \in V(G)$. It thus follows that w.h.p.
$$
n^2 p/2 - f(n) \leq e(G) \leq \tau(G, n) \leq \tau_{\textrm{lab}}(G, n) \leq n^2 p/2 + \sqrt{n^3 p \ln n} + n,
$$
where the second inequality holds by Proposition~\ref{prop::orientationLB} and the last inequality holds by Proposition~\ref{prop::orientationUB}.     
\end{proof}

It follows from all of the aforementioned corollaries that some properties which may still be interesting to study are the construction of odd regular graphs and the construction of a (not predetermined) graph from an interesting family, such as graphs of minimum degree $k$ or $k$-connected graphs (where $k$ is odd).

\section{An urn model} \label{sec::urn}

In this section we analyze an urn model which is somewhat reminiscent of Polya's urn model~\cite{Polya};
it will be used later on to prove Theorem~\ref{th::minDegk}, but may also have independent interest. 

We start with $n$ white balls in an urn. In each round, as long as there is at least one white ball in the urn, we remove one ball from the urn, chosen uniformly at random, and then if the urn still contains at least one white ball, we replace one white ball with one black ball. Let $T$ be the number of rounds until the process terminates (i.e., until there are no white balls left in the urn); clearly $T \leq n-1$. For every non-negative integer $i$, let $W_i$ be the number of white balls in the urn after exactly $\min \{i, T\}$ rounds. Clearly $W_0 = n$, $W_i = 0$ for every $i \geq T$, and for every $1 \leq i \leq T$, it holds that $W_i = W_{i-1} - 2$ if $W_{i-1} > 1$ and a white ball was chosen in the $i$th round (an event which occurs with probability $\frac{W_{i-1}}{n-(i-1)}$), and $W_i = W_{i-1} - 1$ otherwise.

Our main aim in this subsection is to prove that w.h.p. $T$ is very close to $(1 - 1/e) n$. 
We remark that this may be shown by using Wormald's differential equations method \cite{W1,W2}. For completeness, we present a simple direct argument.
We begin by estimating the expectation and variance of the $W_i$'s. Recall that $H_{\ell,m} = \sum_{k=\ell+1}^m \frac{1}{k}$ for every $\ell \leq m$.

\begin{claim}
For every $0 < j < n$ it holds that
\begin{align}
{\mathbb E}(W_j) &\geq (n-j) \left(1 - H_{n-j-1,n-1}\right) \label{eq:E_lower} \\
{\mathbb E}(W_j) &\leq (n-j) \left(1 - \Pr(T>j) H_{n-j-1,n-1}\right) \label{eq:E_upper} \\
{\rm Var}(W_j) &\leq \textstyle\frac{5}{4} j \label{eq:Var}
\end{align}
\end{claim}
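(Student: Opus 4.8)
The plan is to follow the template used for the first urn model: obtain a clean description of the conditional law of $W_i$ given $W_{i-1}$, and then sum the induced recursions for ${\mathbb E}(W_i)$ and ${\rm Var}(W_i)$. The basic ingredient, read off directly from the description of the process, is that for every $1 \le i < n$,
\[
{\mathbb E}(W_i \mid W_{i-1}) = \begin{cases} \frac{n-i}{n-(i-1)}\, W_{i-1} - 1 & \text{if } W_{i-1} \ge 2, \\ 0 & \text{if } W_{i-1} \le 1, \end{cases}
\]
since when $W_{i-1}\ge 2$ we lose two white balls with probability $W_{i-1}/(n-(i-1))$ and exactly one white ball otherwise, while when $W_{i-1}\le 1$ the next round empties the urn of white balls. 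Equivalently, ${\mathbb E}(W_i\mid W_{i-1}) = \left(\frac{n-i}{n-(i-1)}W_{i-1}-1\right)^{+}$, with $x^{+}:=\max\{x,0\}$; this reformulation will be convenient for the variance estimate. Note the contrast with the first urn model, where this conditional expectation was exactly proportional to $W_{i-1}$: here there is an additive $-1$ (responsible for the harmonic term) and a boundary effect at $W_{i-1}\le 1$.

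For \eqref{eq:E_lower} I would use the pointwise bound ${\mathbb E}(W_i\mid W_{i-1}) \ge \frac{n-i}{n-(i-1)}W_{i-1} - 1$, which is an equality when $W_{i-1}\ge 2$ and holds trivially when $W_{i-1}\le 1$ (the right-hand side is then negative). Taking expectations and dividing by $n-i$ gives $\frac{{\mathbb E}(W_i)}{n-i} \ge \frac{{\mathbb E}(W_{i-1})}{n-(i-1)} - \frac{1}{n-i}$; summing this over $i=1,\dots,j$, using ${\mathbb E}(W_0)/n = 1$ and $\sum_{i=1}^{j}\frac{1}{n-i}=H_{n-j-1,n-1}$, yields \eqref{eq:E_lower}. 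For \eqref{eq:E_upper} I would instead start from ${\mathbb E}(W_i\mid W_{i-1}) \le \frac{n-i}{n-(i-1)}W_{i-1} - \mathbf{1}[W_{i-1}\ge 2]$ (again immediate from the display, with equality when $W_{i-1}\ge 2$), take expectations to get ${\mathbb E}(W_i) \le \frac{n-i}{n-(i-1)}{\mathbb E}(W_{i-1}) - \Pr(W_{i-1}\ge 2)$, and then observe that $\{T>j\}\subseteq\{W_{i-1}\ge 2\}$ for every $1\le i\le j$ — because $W_{i-1}\le 1$ forces $W_i=0$, hence $W_j=0$ — so that $\Pr(W_{i-1}\ge 2)\ge \Pr(T>j)$. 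Dividing by $n-i$ and summing over $i=1,\dots,j$ exactly as in the lower bound then yields \eqref{eq:E_upper}.

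For \eqref{eq:Var} I would use the law of total variance, ${\rm Var}(W_i) = {\mathbb E}\big({\rm Var}(W_i\mid W_{i-1})\big) + {\rm Var}\big({\mathbb E}(W_i\mid W_{i-1})\big)$. Conditionally on $W_{i-1}$, the increment $W_i-W_{i-1}$ is deterministic when $W_{i-1}\le 1$ and equals $-1$ minus a Bernoulli variable when $W_{i-1}\ge 2$, so ${\rm Var}(W_i\mid W_{i-1})\le \tfrac14$ always, whence ${\mathbb E}\big({\rm Var}(W_i\mid W_{i-1})\big)\le\tfrac14$. For the second term, the map $g(w):=\left(\frac{n-i}{n-(i-1)}w-1\right)^{+}$ is $\frac{n-i}{n-(i-1)}$-Lipschitz, in particular $1$-Lipschitz, so the identity ${\rm Var}\big(g(X)\big)=\tfrac12{\mathbb E}\big((g(X)-g(X'))^{2}\big)$ for an independent copy $X'$ of $X$ gives ${\rm Var}\big({\mathbb E}(W_i\mid W_{i-1})\big) = {\rm Var}\big(g(W_{i-1})\big)\le {\rm Var}(W_{i-1})$. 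Hence ${\rm Var}(W_i)\le{\rm Var}(W_{i-1})+\tfrac14$, and since ${\rm Var}(W_0)=0$ we obtain ${\rm Var}(W_j)\le \tfrac j4$, which is more than \eqref{eq:Var} requires.

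I expect the only genuine subtlety to be the upper bound \eqref{eq:E_upper}. A direct summation of the exact identity ${\mathbb E}(W_i) = \frac{n-i}{n-(i-1)}{\mathbb E}(W_{i-1}) - \Pr(W_{i-1}\ge1) + \frac{\Pr(W_{i-1}=1)}{n-(i-1)}$ leaves the positive, $i$-dependent term $\frac{\Pr(W_{i-1}=1)}{n-(i-1)}$, which points the wrong way for an upper bound; the inclusion $\{T>j\}\subseteq\{W_{i-1}\ge 2\}$ (equivalently, the bound $\Pr(W_{i-1}=1)\le\Pr(T=i)$) is exactly what lets one trade this $i$-dependent loss for the single clean factor $\Pr(T>j)$ before summing. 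Everything else is the harmonic-sum bookkeeping already carried out for the first urn model, the variance step being essentially identical in spirit, the only new point there being that the relevant conditional-expectation map is $1$-Lipschitz.
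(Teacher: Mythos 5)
Your proposal is correct, and the treatment of \eqref{eq:E_lower} and \eqref{eq:E_upper} is essentially the same as the paper's, just unfolded: the paper packages the boundary cases $W_{i-1}\le 1$ into an auxiliary variable $\tilde W_{i-1}$ so that ${\mathbb E}(W_i\mid W_{i-1})$ becomes exactly affine in $\tilde W_{i-1}$, while you work directly with the piecewise formula and the two pointwise inequalities ${\mathbb E}(W_i\mid W_{i-1})\ge \frac{n-i}{n-(i-1)}W_{i-1}-1$ and ${\mathbb E}(W_i\mid W_{i-1})\le \frac{n-i}{n-(i-1)}W_{i-1}-\mathbf{1}[W_{i-1}\ge 2]$. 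In particular your inclusion $\{T>j\}\subseteq\{W_{i-1}\ge 2\}$ is exactly the paper's $\Pr(W_{i-1}\le 1)\le\Pr(T\le i)$ combined with the monotonicity of $i\mapsto\Pr(T>i)$, so this is the same idea in one step.

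Where you genuinely diverge is the variance bound \eqref{eq:Var}, and your route is cleaner and sharper. The paper controls ${\rm Var}\big({\mathbb E}(W_i\mid W_{i-1})\big)$ by first bounding ${\rm Var}(\tilde W_{i-1})\le{\rm Var}(W_{i-1})+\left(\frac{n-(i-1)}{n-i}\right)^2$ (the additive term being the price of replacing $W_{i-1}$ by $\tilde W_{i-1}$ on the event $W_{i-1}\le 1$); after rescaling this contributes $+1$ per step, giving ${\rm Var}(W_j)\le\frac{5}{4}j$. You instead observe that ${\mathbb E}(W_i\mid W_{i-1})=g(W_{i-1})$ with $g(w)=\big(\frac{n-i}{n-(i-1)}w-1\big)^{+}$ a $1$-Lipschitz map, and apply the symmetrization identity ${\rm Var}(g(X))=\frac12{\mathbb E}\big[(g(X)-g(X'))^2\big]$ to get ${\rm Var}\big(g(W_{i-1})\big)\le{\rm Var}(W_{i-1})$ with no additive loss at all. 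Combined with ${\mathbb E}\big({\rm Var}(W_i\mid W_{i-1})\big)\le\frac14$ this yields ${\rm Var}(W_j)\le\frac{j}{4}$, which is stronger than \eqref{eq:Var} requires. So your proof is both valid and, on this last point, an improvement in the constant; the difference is immaterial for the downstream concentration (Proposition~\ref{prop::urn1concentration}) but the argument is nicer.
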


\begin{proof}
Observe that for every $0 < i < n$, it follows from the definition of the $W_i$'s that
\begin{equation*}
{\mathbb E}(W_i \mid W_{i-1})=\begin{cases}
0 & W_{i-1} \leq 1 \\
W_{i-1} - 1 - \frac{W_{i-1}}{n-(i-1)} & W_{i-1} > 1.
\end{cases}
\end{equation*}
Hence
\begin{equation}\label{eq:conditional_expectation}
\frac{1}{n-i}{\mathbb E}(W_i \mid W_{i-1}) = \frac{1}{n-(i-1)} \tilde{W}_{i-1} - \frac{1}{n-i},
\end{equation}
where
\begin{equation*}
\tilde{W}_{i-1}=\begin{cases}
\frac{n-(i-1)}{n-i} & W_{i-1} \leq 1 \\
W_{i-1} & W_{i-1} > 1.
\end{cases}
\end{equation*}
Note that
\begin{equation}\label{eq:tilde}
{\mathbb E} (\tilde{W}_{i-1}) = {\mathbb E}(W_{i-1}) + \frac{n-(i-1)}{n-i} \Pr(W_{i-1} = 0) + \frac{1}{n-i} \Pr(W_{i-1} = 1).
\end{equation}
Therefore
\begin{equation} \label{eq::Etilde}
{\mathbb E} (\tilde{W}_{i-1}) \leq {\mathbb E}(W_{i-1}) + \frac{n-(i-1)}{n-i} \Pr(W_{i-1} \leq 1) \leq {\mathbb E}(W_{i-1}) + \frac{n-(i-1)}{n-i} \Pr(T\leq i).
\end{equation}
It follows that
\begin{align*}
\frac{1}{n-i} {\mathbb E}(W_i) &= {\mathbb E}\left(\frac{1}{n-i}{\mathbb E}(W_i \mid W_{i-1})\right) = \frac{1}{n-(i-1)} {\mathbb E}(\tilde{W}_{i-1}) - \frac{1}{n-i} \\ &\leq \frac{1}{n-(i-1)}{\mathbb E}(W_{i-1}) + \frac{1}{n-i} \Pr (T \leq i) - \frac{1}{n-i} = \frac{1}{n-(i-1)} {\mathbb E}(W_{i-1}) - \frac{\Pr(T > i)}{n-i},
\end{align*}
where the first equality holds by the law of total expectation, the second equality holds by~\eqref{eq:conditional_expectation}, and the inequality holds by~\eqref{eq::Etilde}. Therefore, for every $0 < j < n$, it holds that
\begin{equation*}
\frac{1}{n-j} {\mathbb E}(W_j) \leq \frac{1}{n} {\mathbb E}(W_0) - \sum_{i=1}^j \frac{\Pr(T>i)}{n-i} \leq 1 - \Pr(T>j) \sum_{i=1}^j \frac{1}{n-i},
\end{equation*}
which proves~\eqref{eq:E_upper}. Similarly, \eqref{eq:E_lower} follows upon observing that 
\begin{equation}\label{eq:E_tilde}
{\mathbb E} (\tilde{W}_{i-1}) \geq {\mathbb E}(W_{i-1}).
\end{equation}
holds by~\eqref{eq:tilde}.

We proceed to prove~\eqref{eq:Var}. For every $0 < i < n$ it holds that
\begin{equation*}
{\rm Var}(W_i\mid W_{i-1}) = {\rm Var}(W_i - W_{i-1} \mid W_{i-1})=\begin{cases}
0 & W_{i-1} \leq 1 \\
\frac{W_{i-1}}{n-(i-1)} \left(1 - \frac{W_{i-1}}{n-(i-1)}\right) & W_{i-1} > 1.
\end{cases}
\end{equation*}
Since the maximum of $x (1-x)$ is $1/4$, it follows that ${\rm Var}(W_i \mid W_{i-1}) \leq 1/4$ and thus
\begin{equation*}
{\mathbb E}\left({\rm Var}(W_i \mid W_{i-1})\right) \leq \textstyle\frac{1}{4}.
\end{equation*}
Moreover, using~\eqref{eq:E_tilde}, we obtain
\begin{equation} \label{eq:Var_tilde}
{\rm Var} (\tilde{W}_{i-1}) \leq {\rm Var}(W_{i-1}) + {\mathbb E}\left(\tilde{W}_{i-1}^2 - W_{i-1}^2 \right) \leq {\rm Var}(W_{i-1}) + \left(\frac{n-(i-1)}{n-i}\right)^2. 
\end{equation}
Therefore 
\begin{align*}
{\rm Var}\left({\mathbb E}(W_i \mid W_{i-1})\right) &= {\rm Var}\left(\frac{n-i}{n-(i-1)} \tilde{W}_{i-1} - 1 \right) = \left(\frac{n-i}{n-(i-1)}\right)^2 {\rm Var}(\tilde{W}_{i-1}) \\
&\leq \left(\frac{n-i}{n-(i-1)}\right)^2 {\rm Var}(W_{i-1}) + 1 \leq {\rm Var}(W_{i-1}) + 1,
\end{align*}
where the first equality holds by~\eqref{eq:conditional_expectation} and the first inequality holds by~\eqref{eq:Var_tilde}.

It then follows by the law of total variance that 
\begin{equation*}
{\rm Var}(W_i) = {\rm Var}\left({\mathbb E}(W_i \mid W_{i-1})\right) + {\mathbb E}\left({\rm Var}(W_i \mid W_{i-1})\right) < {\rm Var}(W_{i-1}) + \textstyle\frac{5}{4}.
\end{equation*}
Noting that ${\rm Var}(W_0) = 0$, this proves~\eqref{eq:Var}.
\end{proof}

\begin{proposition} \label{prop::urn1concentration}
Let $m_0 = \lfloor (1 - 1/e) n \rfloor$ and let $\alpha(n)$ be a positive integer smaller than $m_0$ (in particular, $n\geq 4$). Then
\begin{equation} \label{eq:polya_lower}
\Pr\left(T < m_0 - \alpha(n) \right) < \frac{6n}{(\alpha(n))^2} 
\end{equation}
and
\begin{equation} \label{eq:polya_upper}
\Pr\left(T > m_0 + 36 \alpha(n) + \frac{12n}{(\alpha(n))^2}\right) < \frac{6n}{(\alpha(n))^2}. 
\end{equation}
\end{proposition}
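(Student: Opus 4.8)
The plan is to prove the lower‑tail bound \eqref{eq:polya_lower} first, and then feed it into the proof of the upper‑tail bound \eqref{eq:polya_upper}. For the \textbf{lower tail}: since $\Pr(T\le j)=\Pr(W_j=0)$ for every $j$, the bound \eqref{eq:polya_lower} amounts to estimating $\Pr(W_j=0)$ with $j:=m_0-\alpha(n)-1$; if $j\le 0$ (i.e.\ $\alpha(n)=m_0-1$) then $\Pr(T=0)=0$ and we are done, so I would assume $1\le j\le m_0-2$ and apply Chebyshev's inequality to $W_j$. The only quantitative input needed is that $\mathbb{E}(W_j)\ge\alpha(n)$: this $W_j$ sits one round before the ``critical'' round $m_0$, where the factor $1-H_{n-j-1,n-1}=1-H_{n-m_0+\alpha(n),\,n-1}$ is of order $\alpha(n)/n$, and combining \eqref{eq:E_lower}, the harmonic estimate \eqref{eq:harmonic2}, the elementary inequality $\ln(1+x)\ge x/(1+x)$, and the fact that $n-m_0\ge n/e$ (which holds since $m_0\le(1-1/e)n$) yields, after a short calculation, $\mathbb{E}(W_j)\ge(n-m_0+\alpha(n)+1)\ln\!\big(1+e\alpha(n)/n\big)\ge\alpha(n)$. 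Since $\{W_j=0\}\subseteq\{\,|W_j-\mathbb{E}(W_j)|\ge\mathbb{E}(W_j)\,\}$, Chebyshev's inequality together with the variance bound \eqref{eq:Var} then gives $\Pr(W_j=0)\le{\rm Var}(W_j)/(\mathbb{E}(W_j))^2<\tfrac54 n/(\alpha(n))^2<6n/(\alpha(n))^2$.

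For the \textbf{upper tail}, write $\alpha:=\alpha(n)$. First I would discard the ranges where there is nothing to prove: if $\alpha^2<6n$ the right‑hand side of \eqref{eq:polya_upper} exceeds $1$, and if $\alpha=m_0-1$ then $m_0+36\alpha\ge n-1\ge T$, so the left‑hand side is $0$. Hence I may assume $6n\le\alpha^2$ (so $\alpha\ge 5$, as $n\ge 4$) and $\alpha\le m_0-2$. The key structural observation is that in the second urn model $W$ decreases, by $1$ or by $2$, in each of the first $T$ rounds; consequently $W_{j_1+t}\le W_{j_1}-t$ for $0\le t\le T-j_1$, which forces $T\le j_1+W_{j_1}$, and therefore $\Pr(T>j_1+d)\le\Pr(W_{j_1}>d)$ for every non‑negative integer $d$. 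I would take $j_1:=m_0-1$, and it then remains to bound $W_{j_1}$ from above, once more by Chebyshev, so everything hinges on an upper bound for $\mathbb{E}(W_{j_1})$.

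Here the clean estimate \eqref{eq:E_upper} is useless, since its right‑hand side already contains $\Pr(T>j_1)$ (and on its own it only yields $\Pr(T>j)\le 1/H_{n-j-1,n-1}$, which stays bounded away from $0$). Instead I would use the finer, summed estimate that is established on the way to \eqref{eq:E_upper}, namely $\mathbb{E}(W_{j_1})\le(n-j_1)\big(1-\sum_{i=1}^{j_1}\Pr(T>i)/(n-i)\big)$, and feed into it the lower tail just proved: by \eqref{eq:polya_lower}, $\Pr(T>i)>1-6n/(m_0-1-i)^2$ for every $i\le m_0-2$. Keeping only the terms with $1\le i\le m_0-1-\alpha$, the main contribution $\sum_{i=1}^{m_0-1-\alpha}1/(n-i)$ is at least $1-e(\alpha+2)/n$ by \eqref{eq:harmonic2} together with $n-m_0\le n/e+1$, while the error contribution $6n\sum_{i=1}^{m_0-1-\alpha}\frac{1}{(n-i)(m_0-1-i)^2}$ becomes, after the substitution $k=m_0-1-i$ and the bound $n-m_0+1+k\ge n/e$, at most $6e\sum_{k\ge\alpha}k^{-2}\le 6e/(\alpha-1)$. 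Since $n-j_1=n-m_0+1\le n/e+2$, these estimates give $\mathbb{E}(W_{j_1})\le(\alpha+2)(1+2e/n)+(6n+12e)/(\alpha-1)$; the decisive point is that on the surviving range $6n\le\alpha^2$ one has $6n/(\alpha-1)\le\alpha^2/(\alpha-1)\le\alpha+2$, so this collapses to $\mathbb{E}(W_{j_1})=O(\alpha)$ — a routine estimate gives, say, $\mathbb{E}(W_{j_1})\le 3\alpha+15$. Finally, with $d:=\lceil\mathbb{E}(W_{j_1})\rceil+\lceil\alpha/2\rceil$ one has $d-\mathbb{E}(W_{j_1})\ge\alpha/2$, so Chebyshev and \eqref{eq:Var} give $\Pr(W_{j_1}>d)\le{\rm Var}(W_{j_1})/(\alpha/2)^2\le 5n/\alpha^2<6n/\alpha^2$, while at the same time $j_1+d\le m_0+\tfrac72\alpha+16\le m_0+36\alpha+12n/\alpha^2$; combining these two facts yields \eqref{eq:polya_upper}.

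The step I expect to be the main obstacle — and, I suspect, the source of the generous constants $36$ and $12$ in the statement — is precisely this upper bound on $\mathbb{E}(W_{j_1})$: one must abandon \eqref{eq:E_upper}, recycle the just‑proved lower tail \eqref{eq:polya_lower} inside the summed estimate, control the tail $\sum k^{-2}$ of the resulting error sum, and observe that the leftover term $6n/(\alpha-1)$ only fits inside the prescribed window $36\alpha+12n/\alpha^2$ once the trivial range $\alpha^2<6n$ has been set aside.
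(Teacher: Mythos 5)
Your lower‑tail argument is essentially the paper's: apply Chebyshev to $W$ at a time index near $m_0-\alpha$, bound $\mathbb{E}(W)$ from below via \eqref{eq:E_lower} and the harmonic estimates, and invoke the variance bound \eqref{eq:Var}. The only difference is that you shift the index by one ($j=m_0-\alpha-1$) and get the slightly sharper $\mathbb{E}(W_j)\ge\alpha$, where the paper works at $m=m_0-\alpha$ and settles for $\mathbb{E}(W_m)\ge\alpha/e$; both land comfortably below $6n/\alpha^2$.

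For the upper tail your route is genuinely different from the paper's, and in my view more robust. Both you and the paper convert a bound on $W$ into a bound on $T$ via the fact that $W$ drops by at least one per round (the paper's form is $W_{m+2m_1-1}\le W_m-(2m_1-1)$, yours is $T\le j_1+W_{j_1}$), and both finish with Chebyshev; the divergence is in how one controls $\mathbb{E}(W_\cdot)$. The paper evaluates \eqref{eq:E_upper} at the single time point $m=m_0-\alpha$, substituting the lower‑tail estimate for $\Pr(T>m)$. As printed it uses $\Pr(T>m)>1-\tfrac{6}{\alpha^2}$, but what the lower tail actually gives is $1-\tfrac{6n}{\alpha^2}$; with the correct expression the intermediate bound on $\mathbb{E}(W_m)$ acquires an extra factor of order $n-m=\Theta(n)$ multiplying $\tfrac{6n}{\alpha^2}$ and no longer fits inside $18\alpha+\tfrac{6n}{\alpha^2}$. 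Your approach sidesteps this: you evaluate at $j_1=m_0-1$ and bound $\mathbb{E}(W_{j_1})$ by the \emph{summed} inequality $\mathbb{E}(W_{j_1})\le(n-j_1)\bigl(1-\sum_{i=1}^{j_1}\Pr(T>i)/(n-i)\bigr)$ established inside the proof of \eqref{eq:E_upper}, feeding the just‑proved lower tail in at every $i$. The error terms $\tfrac{6n}{(m_0-1-i)^2}$ then form the convergent tail $\sum_{k\ge\alpha}k^{-2}=O(1/\alpha)$ and contribute only $O(n/\alpha)$, which is absorbed once the trivial range $\alpha^2<6n$ is discarded. This is a sound derivation, and in fact tighter than needed; the generous constants $36$ and $12$ in the statement leave you more slack than you use.
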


\begin{proof}
Denote $m := m_0 - \alpha(n)$.
It follows by~\eqref{eq:harmonic1} and~\eqref{eq:harmonic2} that
\begin{align} 
H_{n-m-1,n-1}&=H_{n-m_0,n}-H_{n-m_0,n-m-1}-\frac{1}{n} \leq \ln\frac{n}{n-m_0}-\frac{m_0-m-1}{n-m-1}-\frac{1}{n}\nonumber\\
&<\ln e-\frac{m_0-m-1}{n}-\frac{1}{n}=1- \frac{\alpha(n)}{n}. \label{eq::smallFirstSum}
\end{align}
Therefore
\begin{equation} \label{eq::EWmLarge}
{\mathbb E}(W_m) \geq (n - m) \left(1 -H_{n-m-1,n-1}\right) \geq \left(\frac{n}{e} + \alpha(n)\right) \frac{\alpha(n)}{n} \geq \frac{1}{e} \alpha(n),
\end{equation}
where the first inequality holds by~\eqref{eq:E_lower} and the second inequality holds by~\eqref{eq::smallFirstSum}. Thus 
\begin{equation}\label{eq:chebyshev}
\Pr\Big(|W_m - {\mathbb E}(W_m)| \geq {\mathbb E}(W_m)\Big) \leq \frac{{\rm Var}(W_m)}{({\mathbb E}(W_m))^2} \leq \frac{\frac{5}{4} e^2 m}{(\alpha(n))^2} \leq \frac{\frac{5}{4} e (e-1) n}{(\alpha(n))^2} < \frac{6n}{(\alpha(n))^2},
\end{equation}
where the first inequality holds by Chebyshev's inequality and the second inequality holds by~\eqref{eq:Var} and by~\eqref{eq::EWmLarge}. Therefore
\begin{equation} \label{eq::StrongForm}
\Pr\left(T \leq m\right) = \Pr\left(W_m = 0 \right) \leq \Pr\Big(|W_m - {\mathbb E}(W_m)| \geq {\mathbb E}(W_m) \Big) < \frac{6n}{(\alpha(n))^2},
\end{equation}
which proves~\eqref{eq:polya_lower}. We proceed to prove~\eqref{eq:polya_upper}. 
It follows by~\eqref{eq:harmonic1} and~\eqref{eq:harmonic2} that
\begin{align} 
H_{n-m-1,n-1} &= H_{n-m_0-2,n-1} - H_{n-m_0-2,n-m-1} \geq \ln\frac{n}{n-m_0-1} - \frac{m_0-m+1}{n-m_0-1}\nonumber\\
&> \ln e - \frac{m_0-m+1}{n/e-1} = 1-\frac{\alpha(n) + 1}{n/e - 1} \geq 1-\frac{2\alpha(n)}{n/e - n/4} > 1 - \frac{18 \alpha(n)}{n}. \label{eq::firstSumLarge}
\end{align}
Therefore
\begin{align} \label{eq::EWmUB}
{\mathbb E}(W_m) &\leq  (n - m) \left(1 - \Pr(T > m) H_{n-m-1,n-1}\right) \nonumber \\
&< n \left(1 - \left(1 - \frac{6}{(\alpha(n))^2}\right) \left(1 - \frac{18 \alpha(n)}{n}\right)\right) < 18 \alpha(n) + \frac{6n}{(\alpha(n))^2},
\end{align}
where the first inequality holds by~\eqref{eq:E_upper} and the second inequality holds by~\eqref{eq::StrongForm} and~\eqref{eq::firstSumLarge}.

Denote $m_1 := \left\lceil 18 \alpha(n) + \frac{6n}{(\alpha(n))^2} \right\rceil$. Then   
\begin{align*}
\Pr\left(T \geq m + 2 m_1 \right) &= \Pr\left(W_{m + 2 m_1 - 1} \geq 1 \right) \leq \Pr\left(W_m \geq 2 m_1 \right) \leq
\Pr\big(W_m \geq 2 {\mathbb E}(W_m)\big) \\
&\leq \Pr\Big(|W_m - {\mathbb E}(W_m)| \geq {\mathbb E}(W_m)\Big) < \frac{6n}{(\alpha(n))^2}.
\end{align*}
where the first inequality holds since $W_{m + 2 m_1 - 1} \leq W_m - (2 m_1 - 1)$, the second inequality holds by~\eqref{eq::EWmUB} and the last inequality holds by~\eqref{eq:chebyshev}. This proves~\eqref{eq:polya_upper}.
\end{proof}

\section{Minimum degree} \label{sec::MinDeg}
In this section we consider minimum degree games. Let ${\mathcal D}_d = {\mathcal D}_d(n)$ be the family of $n$-vertex simple graphs with minimum degree at least $d$. Note that $\tau(\mathcal{D}_d, n) = \tau_{\text{lab}}(\mathcal{D}_d, n)$ for every $d$ and every $n$.

\begin{theorem} \label{th::minDegk}
Let $d \leq n-1$ be a positive integer and let $f : \mathbb{N} \to \mathbb{N}$ be a function satisfying $f(n) = \omega(\sqrt{n})$. 
\begin{description}
\item [(i)] If $d$ is even and $k = d/2$, then $\tau(\mathcal{D}_d, n) = k n$;

\item [(ii)] If $d$ is odd and $k = (d-1)/2$, then w.h.p. it holds that
$$
\left(k + 1 - 1/e \right) n - f(n) \leq \tau(\mathcal{D}_d, n) \leq \left(k + 1 - 1/e \right) n + f(n) + 2k,
$$
where the upper bound holds under the additional assumption that $d = o(n)$.
\end{description}
\end{theorem}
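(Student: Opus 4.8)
The plan is to handle the two parts separately, with part~(i) being an immediate consequence of the general bounds and part~(ii) requiring the first urn model from Section~\ref{sec::urn1}.

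For part~(i), when $d = 2k$ is even, the lower bound $\tau(\mathcal{D}_d, n) \geq kn$ follows from Proposition~\ref{prop::orientationLB}: in any graph with minimum degree $\geq 2k$ and in any orientation of its edges, the sum of out-degrees equals the number of edges, which is at least $kn$, so some vertex has out-degree at least $k$; hence $d$ in the sense of that proposition is at least $k+1$ only if... — more carefully, if every orientation of every graph in $\mathcal{D}_d$ has a vertex of out-degree $\geq k$, we get $\tau \geq (k-1)n+1$, which is not quite enough, so instead I would argue directly: after fewer than $kn$ rounds the total number of edges is less than $kn$, so by averaging some vertex has degree less than $2k = d$, and Builder's graph is not in $\mathcal{D}_d$. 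For the upper bound, Builder simply plays so that whenever he is offered a vertex of current degree less than $d$, he connects it to an arbitrary vertex of current degree less than $d$ (such a vertex exists as long as the graph is not yet in $\mathcal{D}_d$, since two low-degree vertices cannot all be mutually adjacent when $d \le n-1$ — one needs to check a simple pigeonhole/parity point here, possibly allowing a multi-edge as the technical aid mentioned in the introduction). Over $kn$ rounds each vertex is offered exactly $k$ times, and a short accounting shows every vertex ends with degree at least $d$; alternatively, invoke Corollary~\ref{cor::evenRegular} applied to a fixed $2k$-regular graph on $[n]$ (which exists since $2k \le n-1 < n$, adjusting parity of $n$ if needed), giving $\tau(\mathcal{D}_d,n) \le kn$ because a $2k$-regular graph lies in $\mathcal{D}_d$.

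For part~(ii), when $d = 2k+1$ is odd, the key idea is that Builder should first spend roughly $kn$ rounds making the graph (close to) $2k$-regular, and then spend extra rounds to push every vertex up by one more edge to degree $2k+1$; the no-replacement structure forces these "extra" rounds to behave like the first urn model. \textbf{Upper bound.} I would have Builder play a two-phase strategy. In a preliminary phase of about $kn - O(f(n))$ rounds he builds a near-$2k$-regular graph (using Proposition~\ref{prop::orientationUB} on a fixed $2k$-regular graph, so that after $kn$ rounds all degrees are exactly $2k$ — but we want to stop slightly early to keep error terms controlled, so really: build a $2k$-regular graph in exactly $kn$ rounds, using that in these $kn$ rounds each vertex is offered exactly $k$ times and the orientation strategy works). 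Now $n$ "deficient" vertices remain, each needing one more edge; Builder pairs them up greedily. In the remaining rounds, the permutations $\pi_{k+1}, \pi_{k+2}, \ldots$ offer vertices; when a deficient vertex $v$ is offered, Builder connects it to another deficient vertex $u$ and both become satisfied. This is exactly the first urn model: "white" = still-deficient vertex, and when a white ball is drawn we turn one other white ball black (the no-replacement feature, drawing without replacement from the $n$ vertices within each permutation block, matches the urn's "remove a ball, chosen uniformly at random" rule). By Proposition~\ref{prop::urn2concentration} (the first urn model's concentration, equation~\eqref{eq::polya_lower}) with $\alpha(n)$ chosen of order, say, $n^{3/4}$ or more precisely between $\sqrt n$ and $f(n)$ so that $n^3/\alpha(n)^4 = o(1)$ — wait, we need $\alpha(n) = \omega(n^{3/4})$ for that, so instead choose $\alpha(n) = \min\{f(n), n^{0.8}\}$, say, but then $f(n) = \omega(\sqrt n)$ might be smaller than $n^{3/4}$; here is where the assumption $d = o(n)$ and a more careful bound must be used, or one applies the urn result within a single near-complete permutation block so the "$(1-1/e)n$" scale is right — the process terminates within $(1-1/e)n + o(n)$ extra rounds w.h.p. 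Actually the relevant urn is the one giving $T$ close to $n$ minus a small deviation, so the extra cost is $n - o(f(n))$-ish; combined, the total is $(k+1-1/e)n + f(n) + 2k$ after bookkeeping the $2k$ additive slack (from integrality and from the multi-edge avoidance). \textbf{Lower bound.} For any strategy, after $(k+1-1/e)n - f(n)$ rounds I would argue via Proposition~\ref{prop::orientationLB}-type reasoning combined with the urn lower bound: orient each claimed edge from the offered vertex to the chosen one, so each vertex has out-degree equal to the number of times it was offered-and-used, which is tightly concentrated around $k + (\text{fraction of a block})$. Since a vertex needs degree $2k+1$ but has out-degree at most roughly $k$ plus the contribution from the last partial block, and in-degree contributions require *other* vertices to choose it, a counting argument shows that to satisfy all $n$ vertices' last-edge requirement one must essentially run the urn process to completion, which by Proposition~\ref{prop::urn2concentration} w.h.p. does not finish before $n - f(n)$ of the post-$kn$ rounds have elapsed.

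The main obstacle, I expect, is the lower bound for part~(ii): unlike the upper bound, where Builder chooses the strategy and can literally realize the urn, here one must show that *every* strategy is at least as slow as the urn process. The difficulty is that a clever Builder might interleave the "regularizing" edges with the "finishing" edges, or use earlier rounds' edges to pre-position, so one cannot cleanly separate the two phases. I would overcome this by a potential/entropy-style argument: define, for any strategy after $m = kn + t$ rounds, the set of vertices of degree $\leq 2k$, show its size dominates (stochastically) the white-ball count $W_t$ of the urn regardless of Builder's choices — the point being that *each round removes at most one* deficient vertex via the offered vertex and at most one more via Builder's choice, but the offered vertex is uniform among the not-yet-offered-this-block vertices, which is precisely the urn's removal rule, and Builder's choice can satisfy at most one additional deficient vertex, matching "turn one white ball black". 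Monotone coupling then transfers the urn's lower tail (Proposition~\ref{prop::urn2concentration}) to $\tau(\mathcal{D}_d,n)$. The $d = o(n)$ hypothesis in the upper bound should enter only to guarantee that deficient vertices can always be paired with *non-adjacent* deficient vertices (avoiding forced multi-edges), or to control lower-order error terms; I would note this but not belabor it.
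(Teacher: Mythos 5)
Your part~(i) is essentially the paper's argument and is fine. For part~(ii), however, there are two substantive gaps.

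\textbf{Upper bound: wrong urn model.} After Builder spends $kn$ rounds constructing a fixed $2k$-regular graph, the second phase is the game of reaching positive minimum degree while avoiding already-claimed edges. The key observation (Proposition~\ref{prop::minDeg1}, Lemma~\ref{lem::edgeDisjoint}) is that this realizes the \emph{second} urn model of Section~\ref{sec::urn2}: white balls are the vertices still needing one more edge, and — crucially — even when a \emph{black} (already-satisfied) vertex is offered, Builder connects it to a white vertex, which thereby turns black. So $W$ drops by $1$ on a black draw and by $2$ on a white draw, and $T$ concentrates around $(1-1/e)n$ by Proposition~\ref{prop::urn1concentration}. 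You instead describe (and cite) the \emph{first} urn model, Proposition~\ref{prop::urn2concentration}, where only white draws convert whites to black and $T\approx n$; this is the model for perfect matchings, not for minimum degree. Your parameter trouble is a symptom of this: the first model's tail $n^3/\alpha^4$ would require $\alpha=\omega(n^{3/4})$, incompatible with the hypothesis $f(n)=\omega(\sqrt n)$, whereas the second model's tail $6n/\alpha^2$ matches $\omega(\sqrt n)$ exactly. You notice the mismatch but do not resolve it, and the resulting prose is internally inconsistent (you say the extra cost is ``$n$-ish'' yet claim the total $(k+1-1/e)n$). The multi-edge/edge-disjointness issue that you acknowledge but wave off is also where the extra $+2k$ additive term comes from and requires the bookkeeping of Lemma~\ref{lem::edgeDisjoint}.

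\textbf{Lower bound: the coupling fails at the initial condition.} You propose to dominate the number of vertices of degree $\le 2k$ by the urn's white-ball count, started from $n$ whites. But after $kn$ rounds of an \emph{arbitrary} strategy the graph need not be $2k$-regular: some vertices may already have degree $>2k$, so the number of still-deficient vertices can be well below $n$ (compensated by larger per-vertex deficits). Domination then fails already at time zero, and coupling against an urn started from only $m<n$ whites would give the weaker bound $(1-1/e)m$. The missing ingredient is the paper's Lemma~\ref{lem::degreeSequence}: pass to arbitrary deficit vectors $\bar{h}$ with $\sum_i h_i\ge n$, and induct on the number of zero entries by trading a vertex with $h_u=0$ against decrementing some $h_v\ge 2$, until all deficits are positive; the base case is exactly $(\mathcal{D}_1,n)$, for which one must separately show that the greedy strategy is optimal (Claim~\ref{cl::S0Optimal}) before the urn concentration can be applied. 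Your stochastic-domination intuition is reasonable, but without this reduction (or an equivalent handling of the $m<n$ case) the lower bound does not close.
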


We remark that for the `with replacement' model, it was shown in \cite{BHKPSS} that w.h.p. $\tau(\mathcal{D}_d, n) /n=h_d+o(1)$, where $h_d$ is a constant that was presented by Wormald in \cite{W2}, and may be computed by using his differential equations method \cite{W1,W2}. 
The first few $h_d$’s were explicitly calculated; it was shown in \cite{W2, KKLR} that $h_1=\ln 2$ and $h_2=\ln 2+\ln(1+\ln 2)$, and in \cite{KKLR} that $h_3=\ln\left((\ln 2)^2+2(1+\ln 2)(1+\ln(1+\ln 2))\right)$. 
In particular, $h_1 > 1 - 1/e$, $h_2 > 1$, and $h_3 > 2 - 1/e$. Hence, for $d \in \{1, 2, 3\}$, a graph with minimum degree $d$ can be built in the `no replacement' model faster than in the `with replacement' model. It would be interesting to determine if this is true in general.

We will first prove the special case $d=1$ of Theorem~\ref{th::minDegk}. 

\begin{proposition} \label{prop::minDeg1}
Let $f : \mathbb{N} \to \mathbb{N}$ be a function satisfying $f(n) = \omega(\sqrt{n})$. Then w.h.p. it holds that
$$
\left(1 - 1/e \right) n - f(n) \leq \tau(\mathcal{D}_1, n) \leq \left(1 - 1/e \right) n + f(n).
$$
\end{proposition}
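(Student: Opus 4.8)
The plan is to reduce the minimum-degree-$1$ game to the second urn model of Section~\ref{sec::urn2}. Throughout we work within the first $n$ rounds, so $v_k=\pi_1(k)$ and, conditioned on $v_1,\dots,v_{k-1}$, the offered vertex $v_k$ is uniform among the $n-(k-1)$ vertices not offered so far. Call a vertex \emph{covered} once it has an incident edge. A vertex becomes covered the moment it is offered (the rules force Builder to add an edge at it) or the moment Builder uses it as the non-offered endpoint of some edge; in particular every uncovered vertex is one that has not yet been offered. Hence, writing $W_i$ for the number of uncovered vertices after $i$ rounds, we have $W_0=n$, $W_i\le n-i$, and $\tau(\mathcal D_1,n)=\min\{i:W_i=0\}$ under the strategy being used. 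Fix a positive integer function $\alpha$ with $\alpha(n)=\omega(\sqrt n)$ and $\alpha(n)=o(f(n))$; this is possible since $f(n)=\omega(\sqrt n)$ (e.g.\ $\alpha(n)=\lceil\sqrt{\sqrt n\,f(n)}\,\rceil$ works).

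For the upper bound, let Builder play greedily: when offered $v_k$, connect it to another uncovered vertex if one exists, and otherwise to an arbitrary covered vertex. One checks that in round $k$ this gives $W_k=W_{k-1}-2$ when $v_k$ is uncovered and $W_{k-1}>1$ (probability $W_{k-1}/(n-k+1)$), and $W_k=W_{k-1}-1$ when $v_k$ is covered with $W_{k-1}\ge 1$ or when $v_k$ is the unique uncovered vertex — exactly the transitions of the second urn model. Thus the stopping time of this strategy has the distribution of the urn's stopping time $T$, and since $\tau(\mathcal D_1,n)$ is at most this stopping time, Proposition~\ref{prop::urn1concentration} gives $\Pr\!\big(\tau(\mathcal D_1,n)>m_0+36\alpha(n)+12n/\alpha(n)^2\big)=o(1)$, where $m_0=\lfloor(1-1/e)n\rfloor$. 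As $m_0\le(1-1/e)n$ and $36\alpha(n)+12n/\alpha(n)^2=o(f(n))$, this yields the claimed upper bound w.h.p.

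For the lower bound I would show that, for \emph{every} strategy $\mathcal S$, the sequence $(W_i^{\mathcal S})$ stochastically dominates the urn sequence $(W_i^{\mathrm{urn}})$, and hence $\Pr(W_m^{\mathcal S}=0)\le\Pr(T\le m)$ for every $m$. The key facts are that, conditioned on the history up to round $i$, the event ``$v_i$ is uncovered'' has probability exactly $W_{i-1}^{\mathcal S}/(n-i+1)$ — the same formula governing the urn, with $W_{i-1}^{\mathcal S}$ in place of $W_{i-1}^{\mathrm{urn}}$ — while the ensuing decrease of $W^{\mathcal S}$ is at most $2$ when $v_i$ is uncovered and at most $1$ when $v_i$ is covered, whereas the urn decreases by exactly $2$, resp.\ exactly $1$, in these two cases. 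I would couple the two processes with a common fresh uniform $\xi_i$, declaring $v_i$ uncovered iff $\xi_i<W_{i-1}^{\mathcal S}/(n-i+1)$ and the urn's drawn ball white iff $\xi_i<W_{i-1}^{\mathrm{urn}}/(n-i+1)$, and verify case by case that $W_i^{\mathcal S}\ge W_i^{\mathrm{urn}}$ is preserved. Applying this with $m=\lceil(1-1/e)n\rceil-f(n)-1$, which is at most $m_0-\alpha(n)$ for large $n$ since $\alpha(n)=o(f(n))$, and using the lower-tail bound $\Pr(T<m_0-\alpha(n))<6n/\alpha(n)^2=o(1)$ from Proposition~\ref{prop::urn1concentration}, gives $\tau(\mathcal D_1,n)>\lceil(1-1/e)n\rceil-f(n)-1$, i.e.\ $\tau(\mathcal D_1,n)\ge(1-1/e)n-f(n)$, w.h.p.

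The main obstacle is the coupling step: one must ensure Builder's adversarial choice of the other endpoint can never make $W^{\mathcal S}$ fall faster than the urn relative to their current gap — the only delicate case is $W_{i-1}^{\mathrm{urn}}/(n-i+1)\le\xi_i<W_{i-1}^{\mathcal S}/(n-i+1)$, where domination is restored using $W_{i-1}^{\mathcal S}\ge W_{i-1}^{\mathrm{urn}}+1$ — and one must also be sure ``$v_i$ is uncovered'' may legitimately be driven by an exogenous uniform variable. The latter is fine: conditionally on the history the offered vertex is uniform over the not-yet-offered vertices, of which exactly $W_{i-1}^{\mathcal S}$ are uncovered, so one first flips the type of $v_i$ using $\xi_i$ and only afterwards samples $v_i$ within the chosen class and lets $\mathcal S$ select its endpoint. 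Everything else is routine bookkeeping together with the two tail estimates already established for the urn.
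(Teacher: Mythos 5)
Your upper bound argument is identical in substance to the paper's: you run the greedy strategy (connect the offered vertex to an isolated vertex if one exists), observe that the number of isolated vertices evolves exactly as the white-ball count in the second urn model, and apply Proposition~\ref{prop::urn1concentration}. The only cosmetic difference is that the paper phrases the bookkeeping in terms of balls representing vertices.

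Your lower bound takes a genuinely different route. The paper proves Claim~\ref{cl::S0Optimal}, that the greedy strategy $\mathcal S_0$ is \emph{optimal}, via an exchange argument: it introduces the notion of an $i$-natural strategy, shows any strategy is $1$-natural, and shows every $i$-natural strategy is dominated by some $(i+1)$-natural one by modifying a single move and maintaining a set-inclusion coupling on isolated vertices between the two strategies. Once optimality is established, $\tau(\mathcal D_1,n)=\tau(\mathcal S_0)$, which has exactly the urn's distribution, and the concentration bound finishes. You instead couple an \emph{arbitrary} strategy directly against the urn process using a shared uniform $\xi_i$, arguing that $W_i^{\mathcal S}\ge W_i^{\mathrm{urn}}$ is preserved at every step. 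I have checked the case analysis in your coupling and it is correct, including the delicate boundary case $W_{i-1}^{\mathrm{urn}}/(n-i+1)\le\xi_i<W_{i-1}^{\mathcal S}/(n-i+1)$: there the strict inequality forces $W_{i-1}^{\mathcal S}\ge W_{i-1}^{\mathrm{urn}}+1$, so a drop of at most $2$ against the urn's drop of exactly $1$ cannot violate domination. The two-stage sampling you describe (first decide the type of $v_i$ with $\xi_i$, then sample $v_i$ uniformly in that class, then let $\mathcal S$ respond) is indeed a legitimate realization of the conditional distribution of the offered vertex. What each approach buys: the paper's exchange argument yields the stronger structural conclusion that $\mathcal S_0$ is optimal in the domination partial order (useful in its own right and reused in spirit for Claim~\ref{cl::S0OptimalMatching}); your coupling is arguably shorter and avoids defining an auxiliary hierarchy of strategies, delivering the distributional inequality $\Pr(\tau(\mathcal S)\le m)\le\Pr(T\le m)$ directly. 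One small caveat you should make explicit when writing this up: for Proposition~\ref{prop::urn1concentration} to apply you also need $\alpha(n)<m_0=\lfloor(1-1/e)n\rfloor$, which your choice of $\alpha$ need not satisfy if $f(n)$ is very large; but in that regime the bounds in the proposition are vacuous anyway, so one can cap $\alpha$ (e.g.\ take $\alpha(n)=\min\{\lceil\sqrt{\sqrt n\,f(n)}\rceil,\lfloor n/4\rfloor\}$) without affecting the conclusion.
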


\begin{proof}
Starting with the upper bound, consider the following strategy, which we denote by $\mathcal{S}_0$: as long as there are isolated vertices in his graph, in every round Builder connects the vertex he is offered to an arbitrary isolated vertex (if he is offered the last isolated vertex, then he connects it to an arbitrary vertex). 

Now, consider an urn containing $n$ white balls, each one representing one vertex. Whenever Builder is offered a vertex $u$, the ball corresponding to $u$ is removed from the urn. Moreover, if Builder then connects $u$ to $v$, the ball corresponding to $v$ is replaced by a black ball; note that, by the description of $\mathcal{S}_0$, before it was replaced, the ball corresponding to $v$ was white (unless $u$ was the last isolated vertex). This is precisely the urn model described in Section~\ref{sec::urn}, where white balls represent isolated vertices, black balls represent non-isolated vertices that were not yet offered, and the process terminates precisely when Builder's graph has positive minimum degree. It thus follows by Proposition~\ref{prop::urn1concentration} that w.h.p. $\tau(\mathcal{D}_1, n) \leq \left(1 - 1/e \right) n + f(n)$.

For the lower bound, our main goal is to prove the following claim.

\begin{claim} \label{cl::S0Optimal}
$\mathcal{S}_0$ is optimal for $(\mathcal{D}_1, n)$.
\end{claim}

\begin{proof}
For a positive integer $i$, we say that a strategy $\mathcal{S}$ is \textit{$i$-natural} if for every $1 \leq j \leq i$, in the $j$th round, if after Builder is offered a vertex $u_j$ there is still an isolated vertex $v \neq u_j$ in his graph, $\mathcal{S}$ instructs Builder to connect $u_j$ to an isolated vertex. A strategy is said to be \textit{natural} if it is $i$-natural for every $i$. Noting that $\mathcal{S}_0$ is natural, that any two natural strategies are equivalent (in the sense that each one dominates the other), that any strategy is $1$-natural, and that domination is a transitive relation, in order to prove the claim it suffices to prove that, for any positive integer $i$ and any $i$-natural strategy $\mathcal{S}$, there exists an $(i+1)$-natural strategy $\mathcal{S}'$ which dominates $\mathcal{S}$. Let $\mathcal{S}$ be an arbitrary $i$-natural strategy. If $\mathcal{S}$ is $(i+1)$-natural, then set $\mathcal{S}' = \mathcal{S}$ (note that domination is a reflexive relation); assume then that it is not. Define $\mathcal{S}'$ as follows: 
\begin{description}
\item [$1 \leq j \leq i$:] In the $j$th round, Builder plays as instructed by $\mathcal{S}$.

\item [$j = i+1$:] Let $u_{i+1}$ be the vertex Builder is offered in round $i+1$ and let $v_{i+1}$ be the vertex he connects it to when playing according to $\mathcal{S}$. Instead, Builder connects $u_{i+1}$ to an arbitrary isolated vertex $y$ (such a vertex exists as otherwise $\mathcal{S}$ would be $(i+1)$-natural contrary to our assumption).

\item [$j > i+1$:] In the $j$th round, Builder plays as instructed by $\mathcal{S}$ under the assumption that in round $i+1$ he claimed the edge $u_{i+1} v_{i+1}$ (whenever he has to claim an edge he has already claimed, he claims an arbitrary edge instead).        
\end{description} 
Clearly $\mathcal{S}'$ is $(i+1)$-natural. Moreover, it follows from the description of $\mathcal{S}'$ that, at any point during the game $(\mathcal{D}_1, n)$, the set of isolated vertices in Builder's graph when he plays according to $\mathcal{S}$ contains the set of isolated vertices in his graph when he plays according to $\mathcal{S}'$. Hence, $\mathcal{S}'$ dominates $\mathcal{S}$.   
\end{proof}

We conclude that w.h.p. $\tau(\mathcal{D}_1, n) = \tau(\mathcal{S}_0) \geq \left(1 - 1/e \right) n - f(n)$, where the equality holds by Claim~\ref{cl::S0Optimal} and the inequality holds w.h.p. by Proposition~\ref{prop::urn1concentration}.           
\end{proof}

Theorem~\ref{th::minDegk} is an easy corollary of the following two lemmas. The first lemma will be used again in Section~\ref{sec::EdgeCon}.

\begin{lemma} \label{lem::edgeDisjoint}
Let $G$ be a graph on the vertex set $[n]$ with maximum degree $\Delta = o(n)$, let $\mathcal{D}_{1,G}$ be the family of positive minimum degree graphs on the vertex set $[n]$ which are edge disjoint from $G$, and let $f : \mathbb{N} \to \mathbb{N}$ be a function satisfying $f(n) = \omega(\sqrt{n})$.
Then w.h.p. it holds that
$$
\tau(\mathcal{D}_{1,G}, n) \leq \left(1 - 1/e \right) n + f(n) + \Delta.
$$
\end{lemma}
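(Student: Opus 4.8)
The plan is to run the ``connect the offered vertex to an isolated vertex'' strategy behind Proposition~\ref{prop::minDeg1}, made $G$-aware: when offered a vertex $v$, if Builder's current graph still has an isolated vertex $w\ne v$ with $vw\notin E(G)$, he adds the edge $vw$; otherwise (a \emph{wasted} round) he adds an arbitrary edge at $v$ whose other endpoint is not a neighbour of $v$ in $G$ (such an endpoint exists since $\Delta=o(n)$). As the offered vertex has at most $\Delta$ neighbours in $G$, a wasted round can occur only once the number of isolated vertices has dropped to at most $\Delta$. Split the game accordingly into Phase~I, lasting until the first such moment, and Phase~II, the rest. We may assume $f(n)\le n/(3e)$, since otherwise $(1-1/e)n+f(n)+\Delta\ge n\ge\tau(\mathcal{D}_{1,G},n)$, the last inequality holding by Proposition~\ref{prop::orientationUB} applied to a spanning subgraph of minimum degree at least $1$ that is edge-disjoint from $G$ and admits an orientation of maximum out-degree $1$ (one exists since $\Delta=o(n)$).

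Throughout Phase~I no round is wasted, and the set of isolated vertices evolves precisely as the white balls in the second urn model of Section~\ref{sec::urn2}, so Proposition~\ref{prop::urn1concentration} applies. Write $m_0=\lfloor(1-1/e)n\rfloor$. Choosing $\alpha(n)=\omega(\sqrt n)$ with $\alpha(n)\le f(n)/100$, Proposition~\ref{prop::urn1concentration} gives that w.h.p.\ Phase~I ends by round $m_0+f(n)/2$. In addition, when $\Delta=\omega(\sqrt n)$ one uses $\mathbb E(W_j)\ge(n-j)(1-H_{n-j-1,n-1})$ from \eqref{eq:E_lower}, a quantity of order $(n-j)-(n-j)\ln\frac{n}{n-j}$, hence of order $\Delta$ when $n-j\approx n/e+\Delta$, i.e.\ when $j\approx m_0-\Delta$; combining the matching upper estimate, the variance bound \eqref{eq:Var}, and Chebyshev's inequality, w.h.p.\ Phase~I in fact ends by round $m_0-\Delta+O(\Delta^2/n)+o(f(n))$.

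In Phase~II the set of isolated vertices never grows and strictly shrinks at every non-wasted round, so there are at most $\Delta$ non-wasted rounds. Let $u^*$ be the last vertex to lose its isolated status; then $u^*$ is isolated at every Phase~II round except the last, and a wasted round forces the offered vertex into $N_G(u^*)$. Since, being in the first cycle, these offered vertices are distinct, there are at most $|N_G(u^*)|\le\Delta$ wasted rounds, so Phase~II deterministically lasts at most $2\Delta$ rounds. In particular the whole process lasts at most $(1-1/e)n+o(n)$ rounds w.h.p., so at least $n/5$ vertices remain un-offered throughout; hence each Phase~II round is wasted with conditional probability $O(\Delta/n)$, the number $W^*$ of wasted rounds satisfies $\mathbb E(W^*)=O(\Delta^2/n)=o(\Delta)$, and (by Markov, or a Chernoff bound since $W^*$ is stochastically dominated by a sum of independent geometrics) $W^*\le\min\{f(n),\Delta\}/4$ w.h.p.

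Finally, the total number of rounds is at most $(\text{end of Phase~I})+\Delta+W^*$, and we conclude by splitting on $\Delta$. If $\Delta^2=O(nf(n))$, then $W^*\le f(n)/2$ w.h.p., so with ``end of Phase~I $\le m_0+f(n)/2$'' the total is w.h.p.\ at most $(1-1/e)n+f(n)+\Delta$. If $\Delta^2=\omega(nf(n))$, then $\Delta=o(n)$ forces $f(n)=o(n)$, whence $\Delta=\omega(f(n))$ and $\Delta=\omega(\sqrt n)$, so we use ``end of Phase~I $\le m_0-\Delta+O(\Delta^2/n)+o(f(n))$''; since $O(\Delta^2/n)+o(f(n))=o(\Delta)$ here, the total is w.h.p.\ at most $m_0+o(\Delta)+\Delta/4\le(1-1/e)n+\Delta\le(1-1/e)n+f(n)+\Delta$. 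Either way $\tau(\mathcal{D}_{1,G},n)\le(1-1/e)n+f(n)+\Delta$ w.h.p. The main obstacle is Phase~II: controlling the wasted rounds once few isolated vertices remain, and, when $\Delta$ is polynomially close to $n$, verifying that the urn's early emptying near round $(1-1/e)n-\Delta$ creates enough slack to absorb both the $O(\Delta^2/n)$ overshoot and the wasted rounds --- which is why the split on $\Delta$ versus $f$ and a careful use of the estimates in the proof of Proposition~\ref{prop::urn1concentration} are needed.
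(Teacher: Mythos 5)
Your overall plan — run a $G$-aware version of $\mathcal{S}_0$, split the game into Phase~I (while $|A|>\Delta$, no round can be wasted) and Phase~II, bound each — is essentially the paper's, and your Phase~II analysis contains a nice deterministic observation the paper doesn't use: since the last vertex $u^*$ to lose isolation is isolated throughout Phase~II, a wasted round must offer some vertex of $N_G(u^*)\cup\{u^*\}$, so in the first cycle there are at most $\Delta+1$ wasted rounds, giving a hard $2\Delta+1$ bound on the length of Phase~II. However, the Phase~I analysis has a genuine gap, and there are some smaller errors. The paper's key (and much cleaner) observation is that if $|A|>\Delta$ at the start of round $j$, then the urn's termination time $T$ is at least $j+\lceil(\Delta-1)/2\rceil$, so the end of Phase~I is automatically bounded by $T-\lceil(\Delta-1)/2\rceil$; combined with the upper tail of Proposition~\ref{prop::urn1concentration} this immediately yields Phase~I end $\le (1-1/e)n+f(n)-(\Delta-1)/2$ w.h.p., for every $\Delta$. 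You instead bound Phase~I by $T$ itself (losing the $\sim\Delta/2$ slack) and try to recover it, when $\Delta$ is large, via a Chebyshev argument on $W_j$ around $j\approx m_0-\Delta$. But the upper estimate~\eqref{eq:E_upper} involves the factor $\Pr(T>j)$, and for $j$ near $m_0-\Delta$ the error it injects is roughly $(n-j)\,\Pr(T\le j)\approx(n/e)\cdot O(n/\Delta^2)=O(n^2/\Delta^2)$; for this to be $o(\Delta)$ you need $\Delta=\omega(n^{2/3})$, not merely $\Delta=\omega(\sqrt n)$. (In your case split this does happen to follow from $\Delta^2=\omega(nf(n))$ together with $f(n)=\omega(\sqrt n)$, but you never verify it, and the form ``$m_0-\Delta+O(\Delta^2/n)+o(f(n))$'' is not something the stated estimates deliver.) Relatedly, your two cases $\Delta^2=O(nf(n))$ and $\Delta^2=\omega(nf(n))$ are not exhaustive, and in the regime $\Delta^2=\Theta(nf(n))$ Markov gives only $\Pr(W^*>f(n)/2)=O(1)$ from $\mathbb{E}(W^*)=O(\Delta^2/n)$; you would need a strict $\Delta^2=o(nf(n))$ or a genuine Chernoff-style concentration argument, which your parenthetical remark gestures at but does not carry out (the implicit constant in $\mathbb{E}(W^*)=O(\Delta^2/n)$ is well above $1/4$, so $W^*\le\min\{f(n),\Delta\}/4$ does not follow from the stated $O$-bound alone). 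Finally, the justification of the assumption $f(n)\le n/(3e)$ is wrong: if $f(n)>n/(3e)$ then $(1-1/e)n+f(n)+\Delta$ is still strictly less than $n$ when $\Delta=o(n)$, so the orientation bound does not make the claim trivial; the correct reduction is simply to replace $f$ by $\min\{f(n),g(n)\}$ for a suitable $g(n)=\omega(\sqrt n)$ (as the paper does), without pretending the large-$f$ case is vacuous.
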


\begin{proof}
We may assume that $f(n)=o(n)$. 
Hence, we may assume that $(1 - 1/e) n + f(n) + \Delta - 1 < 2n/3$.
At any point during the game, let $A$ denote the set of isolated vertices in Builder's graph $H$ and let $B = \{u \in [n] : uv \in E(G) \textrm{ for every } v \in A\}$; note that $|B| \leq \Delta$ holds throughout the process. In order to build $H$, Builder follows the strategy $\mathcal{S}_0$ which is described in the proof of Proposition~\ref{prop::minDeg1}. Since, when playing according to $\mathcal{S}_0$, Builder is allowed to connect the vertex he is offered to any isolated (in $H$) vertex, he connects the vertex $u$ he is offered to some $v \in A \setminus N_G(u)$. Whenever this is not possible, even though $A \neq \emptyset$, Builder claims an arbitrary edge $uz$ and the corresponding round is declared a \emph{failure}. 

Let $i$ denote the index of the round at the beginning of which $|A| \leq \Delta$ first occurs; observe that there are no failures prior to the $i$th round.
Hence, it follows by Proposition~\ref{prop::minDeg1} that w.h.p. $i \leq \left(1 - 1/e \right) n + f(n) - (\Delta - 1)/2$. For every integer $i \leq j < i + (3\Delta - 1)/2$, Let $u_j$ be the vertex Builder is offered in round $j$. If round $j$ is a failure, we must have $u_j \in B$. 
The probability of this event is $\frac{|B|}{n-(j-1)} < \frac{\Delta}{n/3}$, where the inequality holds since $|B| \leq \Delta$ and $j-1 < i + (3\Delta - 1)/2 - 1 \leq (1 - 1/e) n + f(n) + \Delta - 1 < 2n/3$. 
Let $F$ be the number of indices $i \leq j < i + (3\Delta - 1)/2$ for which $u_j \in B$. 
Then 
$\mathbb{E}(F) \leq 2\Delta \frac{\Delta}{n/3} = o_n(\Delta)$, 
where the equality holds by our assumption that $\Delta = o(n)$. It then follows by Markov's inequality that w.h.p. $F \leq (\Delta - 1)/2$. This leaves at least $\Delta$ indices $i \leq j < i + (3\Delta - 1)/2$ for which the $j$th round is not a failure, each such round decreasing the size of $|A|$ by at least 1 (unless $A = \emptyset$ and Builder has already won). 
Hence, w.h.p. building $H$ requires at most $i + (3\Delta - 1)/2 \leq \left(1 - 1/e \right) n + f(n) + \Delta$ rounds.
\end{proof}

\begin{lemma} \label{lem::degreeSequence}
Let $f : \mathbb{N} \to \mathbb{N}$ be a function satisfying $f(n) = \omega(\sqrt{n})$. Let $\bar{h} = (h_1, \ldots, h_n)$ be a vector of non-negative integers satisfying $\sum_{i=1}^n h_i \geq n$. Let $(\mathcal{D}_{\bar{h}}, n)$ be the game which Builder wins as soon as, for every $1 \leq i \leq n$, the degree of vertex $i$ in his graph is at least $h_i$. Then w.h.p. it holds that
$$
\tau(\mathcal{D}_{\bar{h}}, n) \geq \left(1 - 1/e \right) n - f(n).
$$
\end{lemma}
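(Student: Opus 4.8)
The plan is to reduce the general degree-sequence game $(\mathcal{D}_{\bar h},n)$ to the special case $d=1$, i.e.\ to $(\mathcal{D}_1,n)$, which is handled by Proposition~\ref{prop::minDeg1}. The key observation is that any strategy for $(\mathcal{D}_{\bar h},n)$ must in particular produce a graph in which every vertex $i$ with $h_i\ge 1$ has positive degree. So consider any strategy $\mathcal{S}$ for $(\mathcal{D}_{\bar h},n)$ and let $W=\{i\in[n]:h_i\ge 1\}$. If $W=[n]$, then winning $(\mathcal{D}_{\bar h},n)$ implies having reached a graph of positive minimum degree, and the lower bound follows immediately from Proposition~\ref{prop::minDeg1} (together with the optimality of $\mathcal{S}_0$, via Claim~\ref{cl::S0Optimal}). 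The slight subtlety is that $\bar h$ need not be supported on all of $[n]$; but the hypothesis $\sum_i h_i\ge n$ forces $\bar h$ to have ``enough mass'', and we must show this still costs essentially $(1-1/e)n$ rounds.

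The main step I would carry out is therefore the following. Let $s=|W|$. If $s=n$ we are done as above. If $s<n$, note that since $\sum_{i=1}^n h_i\ge n>s$, at least one vertex of $W$ needs degree at least $2$; more to the point, let me instead track isolated vertices directly. Set up the same urn coupling as in the proof of Proposition~\ref{prop::minDeg1}: a white ball for each currently-isolated vertex, removed when that vertex is offered, and (at best) one white ball turned black whenever Builder adds an edge to a previously-isolated endpoint. Regardless of $\mathcal{S}$, as long as there is more than one isolated vertex, each round destroys at most two white balls, and the ball offered is uniform among the not-yet-offered balls in the current permutation block. Hence the number of isolated vertices in Builder's graph stochastically dominates $W_i$ from the second urn model (Section~\ref{sec::urn2}): Builder's graph can only reduce the isolated set faster than $\mathcal{S}_0$ does on the pessimistic urn, and $\mathcal{S}_0$ is optimal by Claim~\ref{cl::S0Optimal}. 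Since winning $(\mathcal{D}_{\bar h},n)$ requires in particular that every vertex in $W$ be non-isolated, and since $|W|=s$, Builder cannot win before the isolated set has shrunk to size at most $n-s$. I would then invoke Proposition~\ref{prop::urn1concentration}: with $\alpha(n)$ chosen so that $f(n)=\omega(\alpha(n))$ and $\alpha(n)=\omega(\sqrt n)$ (e.g.\ $\alpha(n)=\sqrt{f(n)\sqrt n}$ or simply a suitable intermediate function), w.h.p.\ $T\ge m_0-\alpha(n)$ where $m_0=\lfloor(1-1/e)n\rfloor$, so w.h.p.\ the isolated set still has size $\ge 1$ for at least $(1-1/e)n-f(n)$ rounds. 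The one thing to check is that when $s<n$ the target ``isolated set of size $\le n-s$'' is not reached appreciably sooner than ``isolated set empty''; but the hypothesis $\sum h_i\ge n$ combined with $s<n$ shows $n-s$ is small compared with the slack we have — more carefully, even in the worst case $s=1$ is impossible since $\sum h_i\ge n$ with a single supported coordinate would need $h_1\ge n$, yet $\tau\ge e(G)\ge h_1\ge n$ trivially beats the bound; and for any $2\le s\le n$ one has $n-s\le n-2$, which is absorbed by replacing $f(n)$ by a slightly smaller function of the same order. So in all cases w.h.p.\ $\tau(\mathcal{D}_{\bar h},n)\ge(1-1/e)n-f(n)$.

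The hard part, such as it is, is making the stochastic-domination argument between ``Builder's isolated set under an arbitrary $\mathcal{S}$'' and ``$W_i$ from the second urn model'' fully rigorous, because an arbitrary $\mathcal{S}$ may connect two already-isolated vertices to each other (destroying two white balls at once, matching the urn) or connect an offered isolated vertex to a non-isolated one (destroying only one), and it must be argued that no strategy does better than $\mathcal{S}_0$ — this is exactly the content of Claim~\ref{cl::S0Optimal}, so I would lean on it directly rather than re-prove a domination statement. Concretely: the number of rounds until Builder's graph has no isolated vertices is, for any strategy, at least $\tau(\mathcal{D}_1,n)$ under the optimal strategy $\mathcal{S}_0$ in the sense of the function $p_{\mathcal S}$, hence w.h.p.\ at least $(1-1/e)n-f(n)$ by Proposition~\ref{prop::minDeg1}; and a graph in $\mathcal{D}_{\bar h}$ with $W\neq\emptyset$ has no isolated vertex in $W$, which (handling the trivial edge-count bound $\tau\ge\sum h_i/?$... rather $\tau\ge e(G)$ separately in the degenerate cases) yields the claim. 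I would organize the final write-up as: (1) dispose of $W=\emptyset$ and tiny-$s$ degenerate cases via $\tau\ge\sum_i h_i/\,$(max degree argument) or directly via $\sum h_i\ge n$; (2) for $|W|=n$ apply Claim~\ref{cl::S0Optimal} and Proposition~\ref{prop::minDeg1} verbatim; (3) for general $W$, observe winning implies all of $W$ non-isolated and reduce to (2) with a negligibly smaller error term.
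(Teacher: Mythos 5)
Your reduction for the case $W=\{i:h_i\geq 1\}=[n]$ is correct and matches the paper's base case: winning $(\mathcal{D}_{\bar h},n)$ forces a graph of positive minimum degree, so the bound follows from Proposition~\ref{prop::minDeg1} (via Claim~\ref{cl::S0Optimal}). But the case $s:=|W|<n$ contains a genuine gap, and the step you flag as ``the one thing to check'' is precisely where the argument breaks. You claim that reaching ``isolated set of size $\leq n-s$'' is not appreciably faster than reaching ``isolated set empty'' and that the discrepancy is absorbed into $f(n)$ since $n-s\leq n-2$. This is a non sequitur: $n-2$ is a linear quantity and cannot be absorbed into an error term $f(n)=\omega(\sqrt n)$, which may be as small as $n^{0.6}$, say. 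Concretely, take $s=n/2$ and all nonzero $h_i=2$, so $\sum h_i=n$. An isolated-vertex count alone only needs to drop to $n/2$, and a back-of-the-envelope computation with the urn drift $(n-m)(1+\ln(1-m/n))$ shows this can happen in roughly $0.27n$ rounds, far below $(1-1/e)n\approx 0.632n$. Your fallback, the edge-count bound $\tau\geq e(G)\geq\tfrac12\sum h_i\geq n/2$, is also below $(1-1/e)n$. So for intermediate $s$ neither bound, nor their maximum, reaches the claimed threshold; only the extremes $s=1$ (where $h$-mass gives $\tau\geq n$) and $s=n$ (your base case) are handled by what you wrote.

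What you are missing is an explicit mechanism to trade unused ``zero'' coordinates against coordinates with $h_v\geq 2$, which is what the paper's proof does. It runs an induction on $|A|$ where $A=\{i:h_i=0\}$: given $u\in A$ and $v$ with $h_v\geq 2$ (such $v$ exists since $\sum h_i\geq n>n-|A|$), it couples any strategy $\mathcal{S}$ for $(\mathcal{D}_{\bar h},n)$ to a strategy $\mathcal{S}'$ that the first time $\mathcal{S}$ would connect some offered vertex to $v$, connects it to $u$ instead and thereafter pretends it had connected to $v$. The resulting graph has $d(u)\geq 1$, $d(v)\geq h_v-1$, and $d(i)\geq h_i$ for the rest, so $\mathcal{S}'$ wins the game with the modified target $\bar h'$ (one fewer zero entry, $\sum h'_i=\sum h_i\geq n$) no later than $\mathcal{S}$ wins the original, giving $\tau(\mathcal{S})\geq\tau(\mathcal{S}')\geq\tau(\mathcal{D}_{\bar h'},n)$. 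Iterating $|A|$ times reduces to the all-positive case, which is your step (2). Your proposal would need this degree-redistribution coupling (or something equivalent) to close the $s<n$ case; tracking isolated vertices alone does not encode the requirement that $v$ needs degree $\geq 2$, which is exactly the information lost in your reduction.
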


\begin{proof}
Let $\bar{h} = (h_1, \ldots, h_n)$, let $\mathcal{S}$ be an arbitrary strategy for Builder in $(\mathcal{D}_{\bar{h}}, n)$ and let $A = \{1 \leq i \leq n : h_i = 0\}$. We will prove by induction on $|A|$ that w.h.p. $\tau(\mathcal{S}) \geq \left(1 - 1/e \right) n - f(n)$. The induction basis, $|A| = 0$, follows by Proposition~\ref{prop::minDeg1}. For the induction step, assume that $1 \leq |A| \leq n$. Let $u$ be an arbitrary vertex of $A$ and let $v$ be an arbitrary vertex such that $h_v \geq 2$; such vertices exist since $|A| \geq 1$ and $\sum_{i=1}^n h_i \geq n$. Consider the following Builder's strategy $\mathcal{S}'$: Builder follows $\mathcal{S}$ until the first round $i$ in which he is offered some vertex $z$ which he is instructed to connect to $v$ (since $h_v \geq 2$, if no such round exists, then $\tau(\mathcal{S}) > n$ and we are done). In round $i$, Builder claims the edge $zu$ instead if it is free; otherwise he claims $zv$ as instructed. In every subsequent round, he follows $\mathcal{S}$ under the assumption that he claimed $zv$ in round $i$. As soon as he wins $(\mathcal{D}_{\bar{h}}, n)$ (under the assumption that he claimed $zv$ in round $i$) his graph $G$ satisfies $d_G(u) \geq 1$, $d_G(v) \geq h_v - 1$, and $d_G(i) \geq h_i$ for every $i \in [n] \setminus \{u,v\}$. 
Hence, $\tau(\mathcal{S}) \geq \tau(\mathcal{S}')$.    
It follows by the induction hypothesis that w.h.p. $\tau(\mathcal{S}') \geq \left(1 - 1/e \right) n - f(n)$. Therefore, w.h.p. $\tau(\mathcal{S}) \geq  \left(1 - 1/e \right) n - f(n)$ holds. Since $\mathcal{S}$ was arbitrary, it follows that $\tau(\mathcal{D}_{\bar{h}}, n) \geq \left(1 - 1/e \right) n - f(n)$ as claimed.    
\end{proof}

\begin{proof} [Proof of Theorem~\ref{th::minDegk}]
The lower bound in (i) is trivial and the upper bound is an immediate consequence of Corollary~\ref{cor::evenRegular}. 

In order to prove the upper bound in (ii) we present a strategy for Builder; it is divided into two stages. 
In the first stage, Builder constructs an arbitrary $2k$-regular graph $G$; by Proposition~\ref{prop::orientationUB} this can be done in $k n$ rounds. 
In the second stage, Builder constructs a graph $H$ with positive minimum degree which is edge disjoint from $G$. By Lemma~\ref{lem::edgeDisjoint}, this can be done w.h.p. within $\left(1 - 1/e \right) n + f(n) + 2k$ additional rounds.
We conclude that w.h.p. $\tau(\mathcal{D}_d, n) \leq \left(k + 1 - 1/e \right) n + f(n) + 2k$.           

It remains to prove the lower bound in (ii). Let $\mathcal{S}$ be a strategy for Builder in $(\mathcal{D}_d, n)$. Observe that $\tau(\mathcal{S}) \geq d n/2 > k n$. Let $G$ denote Builder's graph after following $\mathcal{S}$ for $k n$ rounds. For every $1 \leq i \leq n$, let $h_i = \max \{0, 2k+1 - d_G(i)\}$; note that $h_i$ is a non-negative integer for every $1 \leq i \leq n$ and $\sum_{i=1}^n h_i \geq n$. In order to build a graph with minimum degree at least $2k+1$, Builder has to build a graph $H$ such that $E(H) \cap E(G) = \emptyset$ and $d_H(i) \geq h_i$ for every $1 \leq i \leq n$. By Lemma~\ref{lem::degreeSequence} w.h.p. this requires at least $\left(1 - 1/e \right) n - f(n)$ rounds. Therefore, for any strategy $\mathcal S$, w.h.p. $\tau(\mathcal{S}) \geq \left(k + 1 - 1/e \right) n - f(n)$. 
We conclude that w.h.p. $\tau(\mathcal{D}_d, n) \geq \left(k + 1 - 1/e \right) n - f(n)$.
\end{proof}

\section{Perfect matching} \label{sec::PM}
In this section we consider perfect matching games. Throughout this section we assume $n$ to be an even integer. Let $\mathcal{PM} = \mathcal{PM}(n)$ be the family of all perfect matchings on the vertex set $[n]$, and let $M_0$ be the particular perfect matching whose set of edges is $\{(2i-1, 2i) : 1 \leq i \leq n/2\}$. Since $\mathcal{PM} = \{M_0\}_{\text{iso}}$, it follows that $\tau(\mathcal{PM},n) = \tau_{\text{lab}}(\mathcal{PM},n) = \tau(M_0,n)$.
\begin{theorem} \label{th::PM}
$\tau(\mathcal{PM}, n) = \tau_{\text{lab}}(M_0,n)$. Moreover, given any functions $f, g : \mathbb{N} \to \mathbb{N}$ such that $f(n) = \omega \left(\sqrt{n}\right)$ and $g(n) = o\left(\sqrt{n}\right)$, w.h.p.
$$
n - f(n) \leq \tau_{\text{lab}}(M_0,n) \leq n -g(n).
$$
\end{theorem}

We remark that for the `with replacement' model, it was shown in \cite{BHKPSS} that w.h.p. $\ln 2+o(1) \leq \tau(\mathcal{PM}, n) /n \leq 1 + 2/e + o(1)$. Both  bounds were subsequently improved in~\cite{GMP}, where it was shown that w.h.p. $0.93261 + o(1) \leq \tau(\mathcal{PM}, n) /n \leq 1.20524 + o(1)$.

\begin{proof}[Proof of Theorem \ref{th::PM}]
For every $i\in[n]$ let $\mu(i) := i + (-1)^{i+1}$ denote the vertex that is matched to $i$ in the matching $M_0$. 
Consider the following concrete strategy, which we denote by $\mathcal{S}_0$: until his graph first admits the perfect matching $M_0$, Builder maintains a graph in which every connected component contains a unique vertex which was not yet offered; we will mark this vertex as being \emph{active}. Moreover, if $u$ is the active vertex of a connected component $C$, then $\mu(u)$ is active as well if and only if the number of vertices in $C$ is odd. 
These properties clearly hold before the game starts. Assume that they are satisfied immediately after $i-1$ rounds for some $1 \leq i < n$ and that Builder's graph at this point does not yet admit the perfect matching $M_0$. Let $u_i$ be the vertex Builder is offered in round $i$. 
If the vertex $\mu(u_i)$ is active, Builder connects $u_i$ to $\mu(u_i)$.
Otherwise, he connects $u_i$ to  a vertex in some other connected component (note that the graph must contain at least two odd connected components at this point. Indeed, for some $1 \leq j \leq n/2$, the pair $\{2j-1, 2j\}$ was not yet connected by Builder; hence both $2j-1$ and $2j$ are active and thus reside in two different odd components). It is straightforward to verify that Builder can indeed follow this strategy until his graph admits the perfect matching $M_0$.

It is evident that, following this strategy, Builder completes the matching $M_0$ as soon as he is offered at least one vertex of $\{2j-1, 2j\}$ for every $1 \leq j \leq n/2$, but not sooner. 
It thus follows by Claim~\ref{claim:match_comp} that w.h.p. 
$n - f(n) \leq\tau({\mathcal S}_0) \leq n - g(n)$.

Let $(\mathcal{EC}, n)$ be the game Builder wins as soon as every connected component in his graph is even. 
In order to complete the proof, we will prove that ${\mathcal S}_0$ is in fact an optimal strategy for this game.

\begin{claim} \label{cl::S0OptimalMatching}
$\mathcal{S}_0$ is optimal for $(\mathcal{EC}, n)$.
\end{claim}

\begin{proof}
For a nonnegative integer $i$, we say that a strategy $\mathcal{S}$ is \textit{$i$-good} if for every $1 \leq j \leq i$, upon being offered $u_j$ in the $j$th round, $\mathcal{S}$ instructs Builder to act as follows: if $\mu(u_j)$ is still active, Builder is to connect $u_j$ to $\mu(u_j)$; otherwise, he should connect $u_j$ to an arbitrary vertex in some other connected component.
A strategy is said to be \textit{good} if it is $i$-good for every $i$. 
Noting that $\mathcal{S}_0$ is good, that any two good strategies are equivalent (in the sense that each one dominates the other), that any strategy is $0$-good, and that domination is a transitive relation, in order to prove the claim it suffices to prove that, for any positive integer $i$ and any $(i-1)$-good strategy $\mathcal{S}$, there exists an $i$-good strategy $\mathcal{S}'$ which dominates $\mathcal{S}$. Let $\mathcal{S}$ be an arbitrary $(i-1)$-good strategy. If $\mathcal{S}$ is $i$-good, then set $\mathcal{S}' = \mathcal{S}$ (note that domination is a reflexive relation); assume then that it is not. Define $\mathcal{S}'$ as follows: 
\begin{description}
\item [$1 \leq j < i$:] In the $j$th round, Builder plays as instructed by $\mathcal{S}$.

\item [$j = i$:] Let $u_i$ be the vertex Builder is offered in round $i$ and let $v_i$ be the vertex he connects it to when playing according to $\mathcal{S}$. Let $C$ be the connected component containing $u_i$.
Since $\mathcal{S}$ is not $i$-good, there can be only two cases. 
\begin{description}
\item [(a)] 
$\mu(u_i)$ is active and $v_i \neq \mu(u_i)$.
Instead, Builder connects $u_i$ to $\mu(u_i)$.

\item [(b)] 
$\mu(u_i)$ is not active and $v_i \in C$.
Instead, Builder connects $u_i$ to a vertex in some other connected component (note that the graph contains at least two odd connected components at this point). 

\end{description}

\item [$j > i$:] In the $j$th round, Builder plays as instructed by $\mathcal{S}$ under the assumption that in round $i$ he claimed the edge $u_i v_i$ (whenever he has to claim an edge he has already claimed, he claims an arbitrary edge instead).        
\end{description} 

Clearly $\mathcal{S}'$ is $i$-good. Moreover, it follows from the description of $\mathcal{S}'$ that, at any point during the game $(\mathcal{EC}, n)$, the number of odd components in Builder's graph when he plays according to $\mathcal{S}$ is at least as large as the number of odd components in Builder's graph when he plays according to $\mathcal{S}'$. Hence, $\mathcal{S}'$ dominates $\mathcal{S}$.   
\end{proof}
                
Since Builder's graph cannot admit a perfect matching as long as it contains an odd component, we conclude that $\tau(\mathcal{PM}, n) \geq \tau(\mathcal{EC}, n) = \tau(\mathcal{S}_0)$.
Since, obviously, $\tau(\mathcal{PM}, n)\leq \tau_{\text{lab}}(M_0,n)\leq\tau(\mathcal{S}_0)$, 
it follows that
$\tau(\mathcal{PM}, n)= \tau_{\text{lab}}(M_0,n)=\tau(\mathcal{S}_0)$,
which concludes the proof of the Theorem.
\end{proof}

\section{Building regular graphs} \label{sec::oddRegularGraphs}

In this section we consider the construction of regular graphs. Let $G$ be a $d$-regular graph on $n$ vertices. If $d$ is even, then $\tau(G, n) = \tau_{\text{lab}}(G, n)  = d n/2$ holds by Corollary~\ref{cor::evenRegular}. If, on the other hand, $d$ is odd, then 
\begin{equation} \label{eq::oddRegularSimpleBounds}
((d + 1)/2 - 1/e - o(1)) n \leq \tau({\mathcal D}_d, n) \leq \tau(G, n) \leq \tau_{\text{lab}}(G, n) \leq (d+1) n/2
\end{equation}
holds by Proposition~\ref{prop::orientationUB} and by Theorem~\ref{th::minDegk}(ii). The following result shows that the upper bound in~\eqref{eq::oddRegularSimpleBounds} is asymptotically tight for $\tau_{\text{lab}}(G, n)$. 

\begin{theorem} \label{th::oddRegularGraph}
Let $n$ be an even integer, let $1 \leq k \leq n/2 - 1$ be an integer and let $f : \mathbb{N} \to \mathbb{N}$ be a function satisfying $f(n) = \omega(\sqrt{n})$. Let $G$ be a $(2k+1)$-regular graph on the vertex set  $[n]$. Then w.h.p. $\tau_{\text{lab}}(G, n) \geq (k+1) n - f(n)$. 
\end{theorem}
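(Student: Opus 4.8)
We want a lower bound: for any strategy $\mathcal{S}$ of Builder aiming to construct the *labeled* $(2k+1)$-regular graph $G$, w.h.p.\ he needs at least $(k+1)n - f(n)$ rounds. Let me think about what structure is available.

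The out-degree argument (Proposition~\ref{prop::orientationLB}): in any orientation of $G$, since $G$ is $(2k+1)$-regular, some vertex has out-degree $\geq k+1$, so $L(G) \geq (2k+1)/2 > k$, giving $\tau_{\text{lab}}(G,n) \geq kn+1$. But we need roughly $(k+1)n$, a full factor $n$ more, so the crude orientation bound is off by about $n/e \cdot$... actually off by about $(1-1/e)n$ compared to what we want minus what $\mathcal{D}_d$ gives. Hmm wait. Theorem~\ref{th::minDegk}(ii) already gives $\tau(\mathcal{D}_d,n) \geq (k+1-1/e)n - f(n)$, and since building $G$ forces minimum degree $d=2k+1$, we get $\tau_{\text{lab}}(G,n) \geq (k+1-1/e)n - f(n)$ for free. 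So the task is to push from $(k+1-1/e)n$ up to $(k+1)n$: to recover that missing $\approx n/e$.

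Where does the extra $n/e$ come from? It should be an orientation/deficiency argument run *on top of* an almost-$2k$-regular core. Here's my plan. Run Builder's strategy $\mathcal{S}$ for $m := (k+1)n - f(n)$ rounds and suppose for contradiction that w.h.p.\ his graph $H$ contains $G$ as a *labeled* spanning subgraph. Orient each edge of $H$ from the offered vertex $v_k$ to the chosen vertex $u_k$; call this orientation $D$. After $m$ rounds, for each vertex $w$, its out-degree $d_D^+(w)$ equals the number of rounds in which $w$ was offered. Now group the $m$ rounds into $k+1$ "passes": rounds $1,\dots,n$, rounds $n+1,\dots,2n$, etc., with the last pass incomplete (of length $n - f(n)$). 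In each complete pass every vertex is offered exactly once; in the last pass only the vertices offered by the first $n-f(n)$ entries of $\pi_{k+1}$ get offered. So $d_D^+(w) = k$ for the $f(n)$ vertices $w$ *not* offered during the last pass, and $d_D^+(w) = k+1$ for the other $n - f(n)$ vertices. In particular the *total* out-degree used is $m$, and at most $n - f(n)$ vertices have out-degree $k+1$.

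Now I argue $G$ cannot embed. Since $G \subseteq H$ on the same labeled vertex set, restrict $D$ to the edges of $G$ to get an orientation $D'$ of $G$ with $d_{D'}^+(w) \leq d_D^+(w)$ for all $w$. Thus $D'$ is an orientation of the $(2k+1)$-regular graph $G$ in which at least $f(n)$ vertices have out-degree $\leq k$. But in *any* orientation of a $(2k+1)$-regular graph, the number of vertices with out-degree $\leq k$ equals the number with in-degree $\geq k+1$, and summing in-degrees gives $\sum_w d^-_{D'}(w) = e(G) = (2k+1)n/2$; meanwhile $\sum_w d^+_{D'}(w) = (2k+1)n/2$ as well, so on average out-degree is $(2k+1)/2 = k + 1/2$, forcing at least $n/2$ vertices to have out-degree $\geq k+1$ — and each such vertex must have been offered $k+1$ times, i.e.\ offered in the final (incomplete) pass. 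But only $n - f(n)$ vertices are offered in the final pass, and since $f(n) = \omega(\sqrt n)$ we'd need... wait, $n/2 \leq n - f(n)$ is *true*, so this alone is not a contradiction. I need a sharper counting: the real point is a *deficiency* argument. Let $S$ be the set of $f(n)$ vertices not offered in pass $k+1$; every vertex of $S$ has $d_{D'}^+(\cdot) \leq k$, hence $d_{D'}^-(\cdot) \geq k+1$ within $G$. So $\sum_{w \in S} d_{D'}^-(w) \geq (k+1)|S| = (k+1)f(n)$, while $\sum_{w \notin S} d_{D'}^-(w) \geq 0$, giving $e(G) \geq$... that's the wrong direction too. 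The correct move: count *edges of $G$ oriented into $S$*. These come either from inside $S$ or from $[n]\setminus S$; edges oriented out of $[n]\setminus S$ number at most $\sum_{w \notin S} d^+_{D'}(w) \le (k+1)(n - f(n))$. Hence $\sum_{w\in S} d^-_{D'}(w) \le e(G[S]) + (k+1)(n-f(n))$, wait I should just compare total: $\sum_w d^+_{D'}(w) = e(G) = (2k+1)n/2$. The vertices in $S$ contribute $\le k f(n)$; the rest contribute $\le (k+1)(n - f(n))$. So $(2k+1)n/2 \le k f(n) + (k+1)(n-f(n)) = (k+1)n - f(n)$, i.e.\ $f(n) \le (k+1)n - (2k+1)n/2 = n/2$.

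That's it: we only get $f(n) \le n/2$, which is *consistent*, not a contradiction, for moderate $f$. So $m = (k+1)n - f(n)$ with $f(n)=\omega(\sqrt n)$ but $f(n) = o(n)$ slips through this deterministic argument. The deterministic orientation bound genuinely only yields $\tau_{\text{lab}}(G,n) \ge (k+1)n - n/2 = (k + 1/2)n$, matching $L(G)$. So the theorem's improvement to $(k+1)n - o(n)$ must use *randomness of the $\pi_i$* in an essential way, exactly as in the urn analysis. The plan, then, must instead reduce to the first urn model / Proposition~\ref{prop::urn2concentration}, in the spirit of the perfect-matching lower bound in Theorem~\ref{th::PM}.

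Here is the revised plan. Consider any strategy $\mathcal{S}$ and run it for $kn$ rounds; let $G_0$ be Builder's graph. Every vertex was offered exactly $k$ times, so orienting edges from offered to chosen vertex, $G_0$ has an orientation with all out-degrees exactly $k$; hence $G_0$ has at most $kn$ edges and, crucially, the set of vertices whose degree in $G_0$ is still below $2k+1$ — in fact below $2k+1$ by a *parity-forced* amount — is large. The key structural observation: since $G$ is $(2k+1)$-regular (odd degree) while any graph built with out-degrees $\le k$ in $k$ passes has, vertex by vertex, in-degree contributions that behave like the urn. More precisely, I would set up an urn where white balls correspond to vertices whose current degree in Builder's graph is "deficient of the right parity" — a vertex that has received only its $k$ out-edges plus some in-edges but still needs at least one more edge that cannot be supplied as an out-edge in the remaining passes. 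Each round past $kn$, the offered vertex's ball is removed; if Builder uses that round to fix another vertex's deficiency (by pointing an in-edge at it), that other vertex's ball turns black — precisely the dynamics of the first urn model (Section~\ref{sec::urn1}), where white balls vanish in pairs. Proposition~\ref{prop::urn2concentration} then says w.h.p.\ this takes $\ge n - f(n)$ further rounds, for a total of $\ge kn + n - f(n) = (k+1)n - f(n)$.

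The main obstacle — and the step I'd spend the most care on — is the reduction itself: defining the right notion of "deficient vertex" so that (a) after $kn$ rounds w.h.p.\ (or deterministically) there are $n - o(n)$ of them, (b) each round can reduce the count by at most $2$ and does so only when the offered vertex is itself deficient, matching the white-ball-pair dynamics exactly, and (c) Builder cannot cheat by having prepared, during the first $kn$ rounds, a graph $G_0$ whose deficiency pattern is already favorable. For (c) one likely needs an exchange/optimality argument à la Claim~\ref{cl::S0OptimalMatching}: show that the "natural" strategy — spend the first $kn$ rounds making $G_0$ as close to $2k$-regular as possible, then greedily fix deficiencies — dominates all others, so it suffices to lower-bound $\tau$ against that canonical strategy, where the urn coupling is transparent. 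The parity point (each vertex of odd target degree, having exactly $k$ out-edges, needs its in-degree to be odd at the end, hence at least one "special" in-edge) is what makes the pairing in the first urn model, rather than the second, the correct model, and pins the answer at $n$ rather than $(1-1/e)n$.
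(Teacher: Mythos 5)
You correctly diagnose that the deterministic orientation bound tops out at $(k+\tfrac12)n$ and that the extra $\approx n/2$ must come from randomness, and you correctly identify the first urn model and the pairing/parity heuristic as the relevant phenomenon. But your proposed reduction is not carried out, and I do not think it can be carried out in the form you sketch. The trouble is that $G$ is a fixed \emph{labeled} graph: when Builder is offered a vertex $v$ in pass $k+1$, he cannot ``fix another vertex's deficiency'' by pointing an in-edge wherever he likes --- he must claim an edge $vw$ with $w$ one of the specific $G$-neighbours of $v$ whose edge $vw$ is still missing. So the dynamics of which ``white ball'' turns black when $v$ is offered is dictated by the structure of $G$ restricted to the missing edges, not by Builder's choices, and a single exchange/optimality argument in the style of Claim~\ref{cl::S0OptimalMatching} does not obviously reduce all play to a canonical strategy whose evolution is that of the urn. (In fact, for some graphs $G$ and some deficiency patterns, offering a white vertex may be \emph{forced} to pair with an already-black one, or with nothing; you would need to argue this can only help, uniformly in $G$.) Your plan item (c) is precisely where the proof would have to live, and it is left entirely open.

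The paper takes a different and cleaner route that avoids urn coupling for this theorem. It conditions on Builder's graph $H_1$ after exactly $kn$ rounds, and observes that since $e(H_1)\le kn$ while $e(G)=(2k+1)n/2$, at least $n/2$ edges of $G$ are still missing; picking a subgraph $H\subseteq G\setminus H_1$ with exactly $n/2$ edges reduces the problem to showing $\Pr(\tau_{\text{lab}}(H,n)\le n-r)=o(1)$. One then notes that if this probability is positive at all, $H$ admits an orientation with out-degrees at most $1$, so every component of $H$ is a cycle or has a degree-$1$ vertex. Lemma~\ref{lem::GeneralizedPerfectMatching} then counts, for such $H$, the ``good'' permutations $\pi$ (those for which each component $C$ has enough of its vertices among the first $n-r$ offered), and shows via an explicit injection-style sequence of graph transformations that this count is maximised when $H$ is a perfect matching, where Claim~\ref{claim:match_comp} finishes the job. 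This counting argument is robust to Builder's strategy in the first $kn$ rounds in exactly the way your urn coupling is not: it needs only a bound on the number of missing edges, not on the shape of the deficiency. If you want to pursue your route, the missing ingredient is a domination argument over all Builder strategies that handles the labeled constraint, which is substantially harder than in the perfect-matching game; absent that, the proof is incomplete.
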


For the complete graph, the game $(K_n, n)$ is obviously the same as the game  $(K_n, n)_{\text{lab}}$. Hence, while $\tau(K_n, n) = e(K_n) = \binom{n}{2}$ holds by Corollary~\ref{cor::evenRegular} whenever $n$ is odd, Theorem~\ref{th::oddRegularGraph} yields a slightly better lower bound on $\tau(K_n, n)$ when $n$ is even.
\begin{corollary} \label{cor::Kn}
Let $n$ be an even integer and let $f : \mathbb{N} \to \mathbb{N}$ be a function satisfying $f(n) = \omega(\sqrt{n})$. Then w.h.p. $\tau(K_n, n) \geq n^2/2 - f(n)$. 
\end{corollary}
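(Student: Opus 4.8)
The plan is to obtain Corollary~\ref{cor::Kn} as an immediate consequence of Theorem~\ref{th::oddRegularGraph}. First I would observe that for the complete graph there is no distinction between the labeled and the unlabeled game: the only labeled graph on the vertex set $[n]$ that is isomorphic to $K_n$ is $K_n$ itself, so $(K_n)_{\text{iso}} = \{K_n\}$, and hence $\tau(K_n, n) = \tau_{\text{lab}}(K_n, n)$ by~\eqref{eq:lab_vs_iso_tau_def}. It therefore suffices to bound $\tau_{\text{lab}}(K_n, n)$ from below.

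Next I would note that, since $n$ is even, $K_n$ is a $(2k+1)$-regular graph on $[n]$ with $k := n/2 - 1$, and that $k$ is an integer satisfying $1 \le k \le n/2 - 1$ as soon as $n \ge 4$ — the finitely many smaller values of $n$ being irrelevant to a statement that is claimed to hold w.h.p.\ as $n \to \infty$. Applying Theorem~\ref{th::oddRegularGraph} with $G = K_n$ and this value of $k$ then yields that w.h.p. $\tau_{\text{lab}}(K_n, n) \ge (k+1) n - f(n) = (n/2)\cdot n - f(n) = n^2/2 - f(n)$. Combining this with the identity $\tau(K_n,n) = \tau_{\text{lab}}(K_n,n)$ from the first step completes the proof.

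The hard part is not located here at all: the whole substance of the bound — that every Builder strategy w.h.p.\ fails to construct a prescribed $(2k+1)$-regular labeled graph in fewer than $(k+1)n - f(n)$ rounds — is already packaged in Theorem~\ref{th::oddRegularGraph}, which I would simply invoke as a black box. The reduction above is pure bookkeeping; the only points requiring a moment's care are the remark that isomorphism-invariance buys nothing for the complete graph (i.e.\ $(K_n)_{\text{iso}} = \{K_n\}$) and the trivial verification that $k = n/2 - 1$ lies in the admissible range $1 \le k \le n/2 - 1$ of the theorem.
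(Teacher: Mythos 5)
Your proposal matches the paper's own argument: the paper states the corollary follows because $(K_n, n)$ and $(K_n, n)_{\text{lab}}$ are the same game, and then invokes Theorem~\ref{th::oddRegularGraph} with $k = n/2 - 1$. Your bookkeeping (checking $2k+1 = n-1$, $(k+1)n = n^2/2$, and that $k$ lies in the admissible range) is exactly what's implicit there, so the proof is correct and takes the same route.
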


The main ingredient in our proof of Theorem~\ref{th::oddRegularGraph} is the following lemma.

\begin{lemma} \label{lem::GeneralizedPerfectMatching}
Let $n$ be an even integer and let $G$ be a graph on the vertex set $[n]$ with $n/2$ edges in which every connected component is either a cycle or it contains a vertex of degree $1$. Let $f : \mathbb{N} \to \mathbb{N}$ be a function satisfying $f(n) = \omega(\sqrt{n})$. Then w.h.p. $\tau_{\text{lab}}(G, n) \geq n - f(n)$.    
\end{lemma}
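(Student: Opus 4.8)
The plan is to adapt the urn-model argument used in the proof of the lower bound in Theorem~\ref{th::PM}. Recall that there Builder's optimal strategy for the game "every component is even" was shown to be the natural one, and the progress of that strategy was tracked by the first urn model. Here the situation is similar: since $G$ has exactly $n/2$ edges and every component of $G$ is either a cycle or contains a degree-$1$ vertex, $G$ is a disjoint union of cycles and paths (including isolated vertices), and a labeled copy of $G$ can appear in Builder's graph only once Builder's graph has at least $n/2$ edges arranged so as to contain the specified edge set. The key structural observation is that $G$ has a well-defined notion of "which vertices still need an incident edge", and — because each component either is a cycle or has a leaf — building $G$ forces Builder to place edges at a rate comparable to building a perfect matching. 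More precisely, for each vertex $v$ let $r_v \in \{0,1,2\}$ be its degree in $G$; then $\sum_v r_v = n$, and vertex $v$ of $G$ with $r_v \geq 1$ must acquire its prescribed incident edges, each of which "uses up" one of the two specified neighbours of $v$.

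First I would reduce the labeled game $(G,n)_{\text{lab}}$ to a simpler combinatorial game analogous to $(\mathcal{EC},n)$. Partition $[n]$ according to the component structure of $G$: within each path component with vertex sequence $w_1 w_2 \cdots w_\ell$ and each cycle component, the constraint on Builder is that he must eventually claim every edge $w_j w_{j+1}$ (and the closing edge in a cycle). I would define, at each point in the process, the set of "unfinished" vertices — those $v$ for which not all of the $G$-edges at $v$ have yet been claimed by Builder — and show that Builder cannot win until this set is empty. The analogue of an "odd component" in the matching proof is, here, a maximal run of consecutive unclaimed $G$-edges along a path or cycle component that has odd length in a suitable sense; the point is that when Builder is offered a vertex $v$, the single edge he is forced (or allowed) to add can reduce the count of such runs by at most one, and each offered vertex kills at most one such obstruction — exactly the dynamics of the first urn model, in which removing a white ball turns at most one other white ball black.

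The second step is to set up the urn coupling. I would place $n$ white balls in an urn, one per vertex of $[n]$, coloured so that white means "this vertex still lies in an unclaimed $G$-edge on one side that must be matched". When Builder is offered $v_k$, the ball for $v_k$ is removed; if Builder's (forced) edge at $v_k$ is along a $G$-edge to some vertex $u$, then the ball for $u$ — which was necessarily white, by the structure of the forced play — is recoloured black. One must check that when $G$ consists of cycles and paths, this is literally an instance of the first urn model of Section~\ref{sec::urn1} (started with $n$ white balls, one ball removed per round, and the removed ball, if white, turning one other white ball black), and that the process there terminates no sooner than Builder can complete a labeled copy of $G$. Then Proposition~\ref{prop::urn2concentration} with $\alpha(n)$ chosen so that $n^3/\alpha(n)^4 = o(1)$ and $\alpha(n) = o(f(n))$ — e.g. $\alpha(n) = \min\{f(n), n^{4/5}\}$ — gives that w.h.p. the urn's termination time, hence $\tau_{\text{lab}}(G,n)$, is at least $n - f(n)$.

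The main obstacle I anticipate is handling components that are cycles, and verifying that Builder's forced/optimal play genuinely matches the white-to-black rule of the first urn model rather than some faster variant. Unlike the perfect-matching case, where the linear-forest invariant made the optimal strategy transparent, here Builder is constrained to build a specific labeled $G$, so he does not get to choose how to pair up components: he must place the prescribed edges. I would argue that this constraint only helps the lower bound — Builder is strictly more restricted than in the matching game — but one still has to be careful that along a path component the "unclaimed run of odd length" count behaves monotonically and that an offered vertex in an even-length run cannot create a shortcut. A clean way to finish is to observe that any graph with $n/2$ edges that is a union of paths and cycles contains, as a spanning subgraph of a superset of its vertex set, a structure forcing at least the same number of "matching-completion" events as $M_0$ does; formalizing this comparison, rather than re-deriving a bespoke concentration inequality, is where the real work lies, but it is bounded work and parallels Claim~\ref{cl::S0OptimalMatching} closely.
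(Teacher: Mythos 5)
Your proposal takes a genuinely different route from the paper, but it has a real gap. The paper does not couple the labeled game to the first urn model; instead it makes a strategy-independent observation: each $G$-edge within a component $C$ can only be claimed in a round in which some vertex of $V_C$ is offered, so winning by round $n-r$ forces $|V_C \cap \{\pi(1),\ldots,\pi(n-r)\}| \geq |E_C|$ for every component $C$. This defines the ``good'' permutations, and the proof then bounds their number by a purely combinatorial shifting argument: replace a cycle component by a matching on fresh isolated vertices, or detach a leaf and reattach it to a fresh isolated vertex, checking in each step that the set of good permutations does not shrink, until $G$ has been transformed into the perfect matching $M_0$, where Claim~\ref{claim:match_comp} finishes the job. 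No optimality-of-strategy or Markovian coupling is needed.

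The gap in your approach is precisely the part you flag as ``the real work.'' Your proposed invariant --- ``odd runs of unclaimed $G$-edges'' --- is not pinned down for cycle components, and more importantly the coupling you describe does not match the first urn model's dynamics. In that model the white-ball count drops by exactly $2$ when a white ball is drawn and is unchanged otherwise; your argument only asserts that an offered vertex ``kills at most one obstruction,'' which is both a weaker statement and one that involves Builder's choices, so it is not obviously strategy-independent. To make an urn coupling give a lower bound for $\tau_{\text{lab}}(G,n)$ you would need an analogue of Claim~\ref{cl::S0OptimalMatching} identifying an optimal (or dominating) strategy for the labeled game and showing its state evolves exactly as the urn --- but in the labeled game Builder's play is forced vertex-by-vertex by the fixed edge set of $G$, so there is no analogous exchange/shifting argument on strategies to appeal to. The remark that ``the constraint only helps the lower bound'' is the right intuition, but it is exactly the content that needs proof, and the paper's permutation-shifting is the rigorous version of it; replacing it by a dynamic coupling would require, at a minimum, a per-component case analysis (cycles vs.\ paths, leaves vs.\ internal vertices) that your sketch does not supply.
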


\begin{proof}
Fix a positive integer $r$.
A permutation $\pi \in S_n$ is said to be \emph{good} (with respect to $G$ and $r$) if for every connected component $C = (V_C, E_C)$ of $G$, it holds that $|\{\pi(1), \ldots, \pi(n-r)\} \cap V_C| \geq |E_C|$. 
If Builder is able to construct a graph containing $G$ within $n-r$ rounds when the vertices are offered according to $\pi \in S_n$, then $\pi$ must obviously be a good permutation.
Let $S_G \subseteq S_n$ denote the set of all good permutations with respect to $G$ and $r$. 
It follows from the aforementioned observation that $\Pr(\tau_{\text{lab}}(G, n) \leq n - r) \leq |S_G|/n!$. Therefore, in order to complete the proof of the lemma, it suffices to prove that if $r=\omega(\sqrt{n})$ then $|S_G|/n! = o(1)$. 

Assume first that $G$ is a perfect matching. Assume without loss of generality that $G$ is the matching $M_0 = \{(2i-1, 2i) : 1 \leq i \leq n/2\}$ described in Section \ref{sec::PM}. By definition, $\pi \in S_n$ is good with respect to this graph $G$ if and only if 
$\{\pi(1), \ldots, \pi(n-r)\} \cap \{2i-1, 2i\} \neq\emptyset$
for every $1 \leq i \leq n/2$. 
It follows by Claim~\ref{claim:match_comp} that if $r = \omega(\sqrt{n})$, then $|S_G|/n! = o(1)$.

Now, let $G$ be an arbitrary graph which satisfies the conditions of the lemma, but is not a perfect matching. We construct a sequence of graphs $G_0, G_1, \ldots, G_t$ such that the following properties hold
\begin{description}
\item [(1)] $G_0 = G$;

\item [(2)] $G_t$ is a perfect matching;

\item [(3)] $V(G_j) = V(G)$ and $e(G_j) = n/2$ for every $0 \leq j \leq t$;

\item [(4)] For every $0 \leq j \leq t$, every connected component of $G_j$ is either a cycle or it contains a vertex of degree $1$
\end{description}
as follows. Assume that we have already defined $G_0, G_1, \ldots, G_i$ and that $G_i$ is not a perfect matching. Let $C$ be a connected component of $G_i$ with vertices $w_1, \ldots, w_{\ell}$ for some $\ell \geq 3$; since $G_i$ is not a perfect matching, such a component $C$ exists by Property (3). Assume first that $C$ is a cycle. Let $x_1, \ldots, x_{\ell}$ be isolated vertices of $G_i$; such vertices exist by Property (3). Let $G_{i+1} = (G_i \setminus C) \cup \{w_i x_i : 1 \leq i \leq \ell\}$. Now, assume without loss of generality that $w_1$ is a vertex of degree $1$ in $C$ and that $w_2$ is its unique neighbour in $G_i$. Let $x$ be an isolated vertex of $G_i$; such a vertex exists by Property (3). Let $G_{i+1} = (G_i \setminus w_1 w_2) \cup w_1 x$. Note that, by Property (4), these are the only two possibilities for $C$. Moreover, note that the number of isolated edges in $G_{i+1}$ is strictly larger than the number of such edges in $G_i$, implying that the required sequence of graphs is indeed finite. 

Fix an arbitrary index $0 \leq i < t$. In order to complete the proof of the lemma, it suffices to prove that $|S_{G_i}| \leq |S_{G_{i+1}}|$. Let $C$ denote the unique connected component of $G_i$ which was ``broken'' to obtain $G_{i+1}$, and let $V(C) = \{w_1, \ldots, w_{\ell}\}$. 
If $C$ is not a tree, then it easy to verify that $S_{G_i} \subseteq S_{G_{i+1}}$, implying the required inequality $|S_{G_i}| \leq |S_{G_{i+1}}|$. Assume then that $C$ is a tree, let $w_1$ be a vertex of degree $1$ in $C$ and let $w_2$ be its unique neighbour. Observe that if $\pi\in S_{G_i}\setminus S_{G_{i+1}}$, then $\{w_1, x\} \subseteq \{\pi(n-r+1), \ldots, \pi(n)\}$ and $\{w_2, \ldots,w_{\ell}\} \subseteq \{\pi(1), \ldots, \pi(n-r)\}$. Therefore $|S_{G_i}\setminus S_{G_{i+1}}|\leq r (r-1) (n-r) (n-r-1) \ldots (n - r - \ell + 2) (n - \ell - 1)!$. On the other hand, if $\pi \in S_n$ is such that $x \notin \{\pi(n-r+1), \ldots, \pi(n)\}$, $w_1 \in \{\pi(n-r+1), \ldots, \pi(n)\}$, and $|\{\pi(n-r+1), \ldots, \pi(n)\} \cap \{w_2, \ldots, w_{\ell}\}| = 1$, then $\pi \in S_{G_{i+1}}\setminus S_{G_i}$. Therefore, $|S_{G_{i+1}}\setminus S_{G_i}|\geq (\ell - 1) r (r-1) (n-r) (n-r-1) \ldots (n - r - \ell + 2) (n - \ell - 1)!$. It follows that in the transition from $G_i$ to $G_{i+1}$ we lost several good permutations but gained at least that many, implying the required inequality $|S_{G_i}| \leq |S_{G_{i+1}}|$. 
\end{proof}

\begin{proof} [Proof of Theorem~\ref{th::oddRegularGraph}]
Fix some integer $r \geq f(n)$. Our goal is to prove that $\Pr(\tau_{\text{lab}}(G, n) \leq (k+1) n - r) = o(1)$.
Let $\mathcal H$ denote Builder's graph after exactly $k n$ rounds, and let $H_1$ be a graph for which $\Pr(\tau_{\text{lab}}(G, n) \leq (k+1) n - r\mid {\mathcal H}=H_1)$ is maximal. 
Note that $e(G \setminus H_1)\geq e(G) - e(H_1) = n/2$  and let $H \subseteq G \setminus H_1$ be some subgraph with exactly $n/2$ edges.
Since 
\begin{align*}
\Pr(\tau_{\text{lab}}(G, n) \leq (k+1) n - r) &\leq \Pr(\tau_{\text{lab}}(G, n) \leq (k+1) n - r \mid {\mathcal H} = H_1)\\
&=\Pr(\tau_{\text{lab}}(G \setminus H_1, n) \leq n - r)\leq \Pr(\tau_{\text{lab}}(H, n) \leq n - r),
\end{align*}
it suffices to prove that $\Pr(\tau_{\text{lab}}(H, n) \leq n - r) = o(1)$. 
This clearly holds if $\Pr(\tau_{\text{lab}}(H, n) \leq n) = 0$; we may thus assume that $\Pr(\tau_{\text{lab}}(H, n) \leq n) > 0$.
In particular, there exists an orientation $D$ of $H$ such that $d^+_D(u) \leq 1$ for every $u \in V(H)$, implying that every connected component of $H$ is either a cycle or it contains a vertex of degree $1$. We conclude that $H$ satisfies all the conditions of Lemma~\ref{lem::GeneralizedPerfectMatching} and thus 
\begin{equation*}
\Pr(\tau_{\text{lab}}(G, n) \leq (k+1) n - r) \leq \Pr(\tau_{\text{lab}}(H, n) \leq n - r) = o(1).
\qedhere\end{equation*}
\end{proof}

\begin{remark}
The proof of Theorem~\ref{th::oddRegularGraph} would follow through to show that w.h.p. $\tau(G, n) \geq (k+1) n - f(n)$, if for every $(2k+1)$-regular graph $G$ on $n$ vertices and for every (labelled) graph $H_1$ with $k n$ edges on the vertex set $[n]$ there would be at most one (or only a handful) labelled graph $G_1$ on the vertex set $[n]$ such that $G_1$ is isomorphic to $G$ and $e(G_1 \setminus H_1) \leq n$ (this holds, for example, for $G = K_n$-- see Corollary~\ref{cor::Kn}).
However, even the weaker statement that for every $(2k+1)$-regular graph $G$ on $n$ vertices and for every $2k$-regular (labelled) graph $H_1$ on the vertex set $[n]$ there are only a few labelled graphs $G_1$ on the vertex set $[n]$ such that $G_1$ is isomorphic to $G$ and $G_1 \setminus H_1$ is a matching, is false.
For example, the labelled graph $2 C_4$ may be completed to a graph isomorphic to the `cube' graph $H_8$ by adding a matching in $8$ different ways.
Therefore, the labelled graph $2k C_4$ may be completed to a graph isomorphic to the graph $k H_8$ by adding a matching in $\frac{(2k)!}{k!} 4^k$ different ways. 
\end{remark}

\section{Trees} \label{sec::trees}
Recall that   
$$
n-1 \leq  \tau(T, n) \leq \tau_{\text{lab}}(T, n) \leq n
$$
holds by Corollary~\ref{cor:trees} for every tree $T$ on $n$ vertices. The remaining interesting question is to determine $\Pr(\tau(T, n) = n-1)$ and $\Pr(\tau_{\text{lab}}(T, n) = n-1)$ for every tree $T$. We make the following small step in this direction. 

\begin{proposition} \label{prop::trees}
Let $n \geq 2$ be an integer and let $T$ be a tree on the vertex set $[n]$. 
\begin{description}
\item [(a)] If $T$ is a path, then $\tau(T, n) = n-1$ and $\Pr(\tau_{\text{lab}}(T, n) = n-1) = \Theta(1/n)$;

\item [(b)] If $\tau(T, n) = n-1$, then $T$ is a path;

\item [(c)] If $T$ is a star, then 
$$
\Pr(\tau(T, n) = n-1) = \frac{1}{n-1} \left(1 + H_{n-2}\right) = (1 + o(1)) \log n/n
$$
and 
$$
\Pr(\tau_{\text{lab}}(T, n)=n-1) = \frac{1}{n} \left(1 + H_{n-1}\right) = (1 + o(1)) \log n/n.
$$
In particular, 
$$
\Pr(\tau(T, n) = n-1) - \Pr(\tau_{\text{lab}}(T, n) = n-1) = \frac{1}{n(n-1)} H_{n-2} = (1+o(1)) \log n/n^2.
$$
\end{description}
\end{proposition}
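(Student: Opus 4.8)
I would show that Builder has a strategy winning in exactly $n-1$ rounds against \emph{every} sequence of permutations; with the trivial bound $\tau(T,n)\ge e(T)=n-1$ this gives $\tau(T,n)=n-1$. Builder maintains a linear forest and marks one endpoint of each path component as \emph{active}, keeping the invariant that the active endpoints are precisely the vertices not yet offered (vacuously true at the start). When $v_k$ is offered in a round $k\le n-1$ it has not been offered before, so by the invariant it is the active endpoint of its component; Builder connects $v_k$ to an isolated vertex or to the non-active endpoint of another component, merging two components into one, and declares the (still unoffered) other endpoint active. As long as at least two components remain such a move exists, so after $n-1$ rounds the forest is a single spanning path.

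\textbf{Part (a), the labeled path.} Write $T=1\text{-}2\text{-}\cdots\text{-}n$ and $e_i=\{i,i+1\}$. Since $e(T)=n-1$, $\Pr(\tau_{\text{lab}}(T,n)=n-1)=p_{(T,n)_{\text{lab}}}(n-1)=\max_{\mathcal S}\Pr(\mathcal S(n,n-1)=E(T))$. First I would record the structure: a winning play claims in each round one of $e_1,\dots,e_{n-1}$, and since the rounds and the $e_i$'s both number $n-1$ the map (round $\mapsto$ edge claimed) is a bijection; orienting each claimed edge away from the offered vertex, the out-degree equals $1$ except at $w:=\pi_1(n)$, where it is $0$, so (the forced orientation of a path) vertex $v$ must claim the edge towards $w$, namely $e_v$ if $v<w$ and $e_{v-1}$ if $v>w$. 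Thus Builder wins iff he reproduces this matching $\sigma_w$ online. For the lower bound, the strategy ``claim the right edge'' ($v<n\mapsto e_v$, $n\mapsto e_{n-1}$) reproduces $\sigma_w$ exactly when $w\in\{n-1,n\}$, so the probability is $\ge 2/n=\Omega(1/n)$. For the upper bound, note that Builder's round-$k$ move commits him to ``$w>v_k$'' or ``$w\le v_k$''; let $S_k$ be the set of values of $w$ consistent with all commitments through round $k$ (an interval with the offered vertices deleted), so $S_0=[n]$, $S_{n-1}\in\{\emptyset,\{w\}\}$, and Builder wins iff $w\in S_{n-1}$. Conditionally on the history $w$ is uniform on $S_{k-1}$, and when $v_k\in S_{k-1}$ it splits $S_{k-1}\setminus\{v_k\}$ into two parts, the larger of which Builder takes (this is optimal, as a larger surviving set can only help later), keeping $w$ with conditional probability $|S_k|/(|S_{k-1}|-1)$. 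Telescoping over all rounds yields $\Pr(\text{win})=\mathbb{E}\big[\tfrac1n\prod_{k:\,v_k\in S_{k-1}}\!\big(1+\tfrac1{|S_{k-1}|-1}\big)\big]$, and it remains to show the expected product is $O(1)$; this is the crux and the main obstacle. A worst-case bound on the product is far too weak --- it can be $n^{\Omega(1)}$, realized by (exponentially rare) permutations that keep offering vertices near the ends of $S_k$ --- so one must exploit that $v_k$ is essentially uniform inside $S_{k-1}$, making $|S_k|$ shrink by a constant factor with overwhelming probability; I would make this quantitative via a dyadic decomposition by the size of $S_k$ together with a tail estimate on the number of ``splitting'' rounds at each scale, showing the slow-decrease events contribute only $O(1/n)$.

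\textbf{Part (b).} Suppose $T$ is a tree on $[n]$ that is not a path and fix $c$ with $\deg_T(c)=d\ge 3$; I would prove $p_{(T,n)}(n-1)<1$ by exhibiting, for an arbitrary strategy $\mathcal S$, a permutation $\pi_1$ on which $\mathcal S$ fails to build a copy of $T$ in $n-1$ rounds. As in part (a), if the claimed edges form a copy of $T$ then, orienting each away from its offered vertex, we obtain the unique orientation of that copy with out-degree $1$ everywhere except at $w=\pi_1(n)$, i.e.\ the orientation towards the root $w$; hence in round $k$ Builder is choosing the parent of $v_k$ in a rooted tree whose root he sees only in round $n$. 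The image $c'$ of $c$ has degree $d$, so $d-1\ge 2$ vertices must be connected to $c'$ in the rounds they are offered. The adversary reveals vertices so as to pin Builder down early to the identity of $c'$ and the routing of its branches, and then takes $\pi_1(n)$ to be a vertex that cannot be slotted into the committed branch structure --- this is forced because $\pi_1(n)$ is revealed last and never receives an edge in rounds $1,\dots,n-1$ unless Builder deliberately directs one to it. The star, where ``the committed structure'' is just the chosen centre, is the model case, treated quantitatively in part (c).

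\textbf{Part (c).} For the labeled star with centre $c$: Builder must claim $\{c,\ell\}$ whenever a leaf $\ell$ is offered, so his only freedom is which not-yet-offered vertex to connect $c$ to in the round $c$ is offered; he wins iff $\pi_1(n)=c$ (probability $1/n$) or $\pi_1(n)$ is a leaf and he guesses it at the centre's round, which --- the centre's round being uniform in $\{1,\dots,n-1\}$ conditionally --- succeeds with probability $\frac1{n-1}\sum_{j=1}^{n-1}\frac1j$, giving $\frac1n+\frac{n-1}{n}\cdot\frac{H_{n-1}}{n-1}=\frac{1+H_{n-1}}{n}$; optimality is clear, as any other move precludes a star. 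For the unlabeled star Builder also chooses the centre: the optimal play connects $v_1$ to a uniformly random not-yet-offered $u_1$, adopts $u_1$ as centre, connects every later offered vertex to $u_1$, and guesses $\pi_1(n)$ at the round $u_1$ is offered; he wins if $u_1=\pi_1(n)$ (probability $\frac1{n-1}$) or else if the guess succeeds, and conditionally on $u_1\ne\pi_1(n)$ the round of $u_1$ is uniform in $\{2,\dots,n-1\}$, giving $\frac1{n-1}+\frac{n-2}{n-1}\cdot\frac{H_{n-2}}{n-2}=\frac{1+H_{n-2}}{n-1}$. Both equal $(1+o(1))\log n/n$ since $H_m=(1+o(1))\log m$, and subtracting, using $H_{n-1}=H_{n-2}+\frac1{n-1}$, yields $\frac{n(1+H_{n-2})-(n-1)(1+H_{n-1})}{n(n-1)}=\frac{H_{n-2}}{n(n-1)}=(1+o(1))\log n/n^2$.
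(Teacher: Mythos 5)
Your part (a) unlabeled argument and your part (c) computations are essentially the paper's own (for the path, the paper maintains a single path rather than a linear forest, but that is cosmetic; for the star your two strategies coincide with the paper's, and your computation of $\frac{1+H_{n-1}}{n}$ and $\frac{1+H_{n-2}}{n-1}$ is correct). However, there are two genuine gaps.

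\textbf{Part (a), labeled path upper bound.} You have set up an interesting alternative framework — orienting towards the unknown root $w=\pi_1(n)$ and tracking the set $S_k$ of roots consistent with Builder's commitments — but, as you yourself acknowledge, you do not actually prove that the expected product is $O(1)$, and you note that a worst-case bound is far too weak. This is the entire content of the $O(1/n)$ upper bound; without it, part (a) is only half proved. The paper takes a completely different route: writing $T=u_1\text{-}\cdots\text{-}u_n$ and $a_n=n\Pr(\tau_{\text{lab}}(T,n)=n-1)$, conditioning on the first offered vertex yields the recursion $a_n=\frac{1}{n-1}\bigl(2a_{n-1}+\sum_{i=2}^{n-1}\max\{a_{i-1},a_{n-i}\}\bigr)$, from which monotonicity of $(a_n)$ and a telescoping identity for $a_{2k+1}-a_{2k}$ and $a_{2k+2}-a_{2k+1}$ give, by induction, $a_n - a_{n-1}\le \frac{30}{n+3}-\frac{30}{n+4}$ and hence $1\le a_n<7$. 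If you want to pursue your own approach you would still need to justify that ``take the larger half'' is optimal (this requires a genuine coupling/domination argument, since both sides will be split repeatedly) and then carry out the concentration estimate on $|S_k|$ that you only sketch.

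\textbf{Part (b).} Your adversary argument is not a proof. You say the adversary ``pins Builder down early to the identity of $c'$ and the routing of its branches, and then takes $\pi_1(n)$ to be a vertex that cannot be slotted into the committed branch structure,'' but you never construct the permutation or verify that such a vertex always exists, for an arbitrary strategy and an arbitrary non-path tree — you defer to part (c), which only handles the star. The paper's argument is much simpler and avoids any reasoning about the high-degree vertex: for any strategy $\mathcal{S}$, with probability at least $1/(n-1)!$ the adversary offers, in each round $i+1$, exactly the vertex $x_i$ that Builder connected $v_i$ to; by induction Builder's graph is then forced to remain a path $x_0,\dots,x_i$ (or else Builder creates a cycle or a double edge and trivially cannot build any spanning tree in $n-1$ edges). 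Since a path contains no copy of a tree $T$ with a vertex of degree $\ge 3$, it follows that $\Pr(\tau(\mathcal{S})>n-1)>0$ for every $\mathcal{S}$, hence $\tau(T,n)=n-1$ cannot hold. I recommend replacing your sketch with this explicit ``always offer the last-added endpoint'' adversary.
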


\begin{remark} \label{rem::StarvsPath}
It is interesting to note that, as can be seen from Proposition~\ref{prop::trees}, $\Pr(\tau(T, n) = n-1)$ and $\Pr(\tau_{\text{lab}}(T, n) = n-1)$ are ``very close'' when $T$ is a star but are ``very far'' when $T$ is a path. Moreover, $\Pr(\tau(K_{1,n-1}, n) = n-1) < \Pr(\tau(P_n, n) = n-1)$ but $\Pr(\tau_{\text{lab}}(K_{1,n-1}, n) = n-1) > \Pr(\tau_{\text{lab}}(P_n, n) = n-1)$.    
\end{remark}

\begin{proof} [Proof of Proposition~\ref{prop::trees}]
Starting with the first part of (a), we will describe Builder's strategy for constructing a Hamilton path in $n-1$ rounds. At any point during the process, let $F \subseteq [n]$ denote the set of vertices that were not offered until this point; that is, immediately before the $i$th round, the set $F$ consists of the $n-(i-1)$ vertices that were not offered in the first $i-1$ rounds.
For every $1 \leq i \leq n-1$, let $u_i$ denote the vertex Builder is offered in the $i$th round. In the first round, Builder claims an arbitrary edge which is incident with $u_1$. For every $i \geq 2$, Builder plays the $i$th round as follows. If $u_i$ is isolated in his graph, then Builder connects it to the endpoint of his current path which is not in $F$. Otherwise, he connects $u_i$ to an arbitrary isolated vertex.

In order to prove that $\tau(T, n) \leq n-1$, it remains to prove that Builder can indeed follow the proposed strategy and that, by doing so, he builds a Hamilton path in $n-1$ rounds. In order to do so, we will prove by induction on $i$ that, for every $1 \leq i \leq n-1$, immediately after the $i$th round, Builder's graph is a path $x_0, x_1, \ldots, x_i$ such that $\{x_0, x_1, \ldots, x_i\} \cap F = \{x_i\}$. This is clearly true for $i=1$. Assume that this is true for some $1 \leq i \leq n-2$ and let $x_0, x_1, \ldots, x_i$ be Builder's path immediately after the $i$th round. Consider the $(i+1)$th round. If $u_i$ is isolated in Builder's current graph, he connects it to $x_0$ (as $x_0 \notin F$ and $x_i \in F$ by the induction hypothesis). Otherwise, $u_i = x_i$ since by the induction hypothesis $x_i$ is the unique vertex of $F$ which is not isolated in Builder's graph. Builder then connects $x_i$ to some isolated vertex $x$. In both cases, by relabeling the names of the vertices, we see that Builder's graph immediately after the $(i+1)$th round is a path $x_0, x_1, \ldots, x_{i+1}$ such that $\{x_0, x_1, \ldots, x_{i+1}\} \cap F = \{x_{i+1}\}$. In particular, immediately after the $(n-1)$th round, Builder's graph is a Hamilton path $x_0, \ldots, x_{n-1}$. 

Next, we prove the second part of (a). Let $T$ be the path with vertex set $\{u_1, \ldots, u_n\}$ and edge set $\{u_i u_{i+1} : 1 \leq i \leq n-1\}$. For simplicity, denote $p_n := \Pr(\tau_{\text{lab}}(T,n) = n-1)$ and $a_n := n\,p_n$ for every $n \geq 1$. If the first vertex to be offered is $u_1$, then Builder must connect it to $u_2$. He then has to build the path $T \setminus u_1$; clearly the probability of doing so in $n-2$ rounds is $p_{n-1}$. Similarly, 
$$
\Pr(\tau_{\text{lab}}(T,n) = n-1 \mid u_n \textrm{ is the first vertex to be offered}) = p_{n-1}.
$$ 
If on the other hand, the first vertex to be offered is $u_i$ for some $2 \leq i \leq n-1$, then Builder has two options to choose from; he can connect $u_i$ to $u_{i-1}$ or to $u_{i+1}$. In the former case, he can then complete the path in $n-2$ additional rounds with probability $\frac{i-1}{n-1} \cdot p_{i-1}$. Indeed, if the vertex to be offered in round $n$ is in the set $\{u_1, \ldots, u_{ i-1}\}$, an event which occurs with probability $\frac{i-1}{n-1}$, then Builder can build the path $T[\{u_i, \ldots, u_n\}]$ with probability 1 (by connecting $u_j$ when it is offered to $u_{j-1}$ for every $i+1 \leq j \leq n$) and the path $T[\{u_1, \ldots, u_{i-1}\}]$ with probability $p_{i-1}$. If, on the other hand, the vertex to be offered in round $n$ is in the set $\{u_{i+1}, \ldots, u_n\}$, then building $T$ will surely require $n$ rounds. Similarly, in the latter case, Builder can complete the path in $n-2$ additional rounds with probability $\frac{n-i}{n-1} \cdot p_{n-i}$. For every $n > 1$, it then holds by the law of total probability that   
$$
p_n = \frac{2}{n} p_{n-1} + \sum_{i=2}^{n-1} \frac{1}{n} \max\left\{\frac{i-1}{n-1} p_{i-1}, \frac{n-i}{n-1} p_{n-i}\right\},
$$
or, equivalently,
\begin{equation}\label{eq:recursion}
a_n = \frac{1}{n-1}\left(2a_{n-1} + \sum_{i=2}^{n-1} \max\left\{a_{i-1}, a_{n-i}\right\}\right).
\end{equation}

Note that $p_m \geq \frac{m-1}{m} p_{m-1}$ for every $m > 1$. Indeed, if $u_m$ is offered before round $m$, then Builder can connect it to $u_{m-1}$ and complete the remainder of the path in $m-2$ rounds with probability $p_{m-1}$. It follows that $\{a_n\}_{n=1}^{\infty}$ is a non-decreasing sequence. Therefore, for every $n>1$, \eqref{eq:recursion} yields that
$$
a_n =
\begin{cases}
\frac{2}{n-1} \sum_{j=n/2}^{n-1} a_j & \quad n \text{ is even,} \\
\frac{1}{n-1} a_{(n-1)/2} + \frac{2}{n-1} \sum_{j=(n+1)/2}^{n-1} a_j & \quad n \text{ is odd.}
\end{cases}
$$

In particular, for every $k\geq 1$, it holds that
$$
2ka_{2k+1}-(2k-1)a_{2k}=\left(a_k +2 \sum_{j=k+1}^{2k} a_j\right)-2 \sum_{j=k}^{2k-1} a_j=2a_{2k}-a_k,
$$
implying that
$$
a_{2k+1} - a_{2k} = \frac{1}{2k} (a_{2k} - a_k).
$$
Moreover
\begin{align*}
(2k+1)a_{2k+2}-2\cdot 2ka_{2k+1}+(2k-1)a_{2k} &= 2 \sum_{j=k+1}^{2k+1} a_j-2\left(a_k +2 \sum_{j=k+1}^{2k} a_j\right)+2 \sum_{j=k}^{2k-1} a_j \\
&= 2a_{2k+1}-2a_{2k},
\end{align*}
implying that
$$
a_{2k+2} - a_{2k+1} = a_{2k+1} - a_{2k}.
$$

We conclude that 
$$
a_{2k+2} - a_{2k+1} = a_{2k+1} - a_{2k} = \frac{1}{2k} (a_{2k} - a_k) = \frac{1}{2k} \sum_{j=k+1}^{2k} (a_j - a_{j-1})
$$
holds for every $k \geq 1$.

Since $a_2 - a_1 = 2 - 1 = 6 - 5 = \frac{30}{5} - \frac{30}{6}$ and for every $k \geq 1$ it holds that 
\begin{align*}
\frac{1}{2k} \sum_{j=k+1}^{2k} \left(\frac{30}{j+3}-\frac{30}{j+4}\right)&=\frac{1}{2k}\left(\frac{30}{k+4}-\frac{30}{2k+4}\right) = \frac{30}{(2k+4)(2k+8)}\\
&\leq \frac{30}{2k+5}-\frac{30}{2k+6}\leq \frac{30}{2k+4}-\frac{30}{2k+5},
\end{align*}
it follows by induction that $a_n-a_{n-1}\leq\frac{30}{n+3}-\frac{30}{n+4}$ holds for every $n \geq 2$.
Therefore
$$
1 = a_1 \leq a_n \leq a_1 + \sum_{i=2}^n \left(\frac{30}{i+3} - \frac{30}{i+4} \right) =  1 + \frac{30}{5} - \frac{30}{n+4} < 7
$$ 
holds for every $n \geq 1$, and the claim follows.

Next, we prove (b). If at some point during the first $n-1$ rounds, Builder claims an edge which closes a cycle, then he cannot build $T$ in $n-1$ rounds; assume then that after $n-1$ rounds Builder's graph is a tree. We will prove by induction on $1 \leq i \leq n-1$ that, with positive probability, the following two properties hold immediately after the $i$th round:
\begin{description}
\item [(1)] Builder's graph is a path $x_0, \ldots, x_i$;

\item [(2)] 
The set of vertices that were offered in the first $i$ rounds is $\{x_0,\ldots,x_{i-1}\}$.
\end{description}
This is trivially true for $i=1$. Assume then that, for some $1 \leq i \leq n-2$, immediately after the $i$th round, Builder's graph satisfies properties (1) and (2) as above. If in round $i+1$ Builder is offered $x_i$, then immediately after this round, his graph still satisfies properties (1) and (2). This concludes the induction step as Builder is offered $x_i$ in round $i+1$ with probability $1/(n-i) > 0$. It then follows that, with probability at least $1/(n-1)! > 0$, Builder's graph after $n-1$ rounds will either contain a cycle or will be a path.   

Finally, we prove (c). Let $u_1 u_2$ be the edge Builder claims in the first round, where $u_1$ is the vertex he was offered. 

If Builder is offered the vertex $u_2$ in the second round (an event which occurs with probability $\frac{1}{n-1}$), then Builder will be able to build $T$ in $n-1$ rounds (by connecting all the vertices he is offered after the second round to $u_2$) if and only if the vertex to which $u_2$ was connected in the second round will not be offered during the next $n-3$ rounds; this event will occur with probability $\frac{1}{n-2}$.

If the vertex Builder is offered in the second round is not $u_2$, say $u_3$, then in order to have a positive probability of completing $T$ in $n-1$ rounds, Builder must connect $u_3$ to either $u_1$ or $u_2$. 

If, in the second round, Builder connects $u_3$ to $u_1$, then he will be able to build $T$ in $n-1$ rounds (by connecting all the vertices he is offered after the second round to $u_1$) if and only if $u_2$ will not be offered during the next $n-3$ rounds; this event will occur with probability $\frac{1}{n-2}$.

However, it turns out that Builder has a larger probability of building $T$ in $n-1$ rounds if he connects $u_3$ to $u_2$ in the second round. Then, in order to build $T$ in $n-1$ rounds, from this point onwards Builder must connect every vertex he is offered to $u_2$, unless he is offered $u_2$ which he should then connect to an arbitrary isolated vertex. This is possible if and only if either $u_2$ is not offered at all during the $n-3$ rounds following the second round, or $u_2$ is offered in some round $3 \leq i \leq n-1$ and then Builder connects it to some vertex that was not offered yet and this vertex happens not to be offered during the next $n-i-1$ rounds. The probability of this event is 
$$
\frac{1}{n-2} + \left(\sum_{i=3}^{n-1} \frac{1}{n-2} \cdot \frac{1}{n-i}\right)=\frac{1}{n-2} \left(1+\sum_{i=3}^{n-1} \frac{1}{n-i}\right)
$$
which is clearly larger than $\frac{1}{n-2}$.

We conclude that 
\begin{align*}
\Pr(\tau(T, n) = n-1) &=\frac{1}{n-1}\cdot \frac{1}{n-2}+\frac{n-2}{n-1}\cdot\frac{1}{n-2}\left(1 + \sum_{i=3}^{n-1} \frac{1}{n-i}\right)\\
&=\frac{1}{n-1}\left(1 +\sum_{i=2}^{n-1} \frac{1}{n-i}\right)=\frac{1}{n-1}\left(1 +H_{n-2}\right).
\end{align*}        
 
A similar (simpler) analysis shows that 
\begin{equation*}
\Pr(\tau_{\text{lab}}(T, n) = n-1) =\frac{1}{n}\left(1 +H_{n-1}\right).
\qedhere\end{equation*}         
\end{proof}

\section{Edge-connectivity} \label{sec::EdgeCon}
In this section we consider the $k$-edge-connectivity game. For every positive integer $k$, let ${\mathcal C}_k = {\mathcal C}_k(n)$ denote the family of all $k$-edge-connected $n$-vertex graphs. Since there are $k$-vertex-connected $k$-regular graphs for every $k \geq 2$, it follows by Corollary~\ref{cor::evenRegular} that $\tau(\mathcal{C}_{2k}, n) = k n$ for every positive integer $k$ and every sufficiently large $n$. Moreover, $\tau(\mathcal{C}_1, n) = n-1$. Indeed, the lower bound is trivial and the upper bound holds since $\tau(\mathcal{C}_1, n) \leq \tau(P_n, n) = n-1$, where the equality holds by Proposition~\ref{prop::trees}(a). The following theorem determines $\tau(\mathcal{C}_r, n)$ asymptotically for all other (not too small or too large) values of $r$.  

\begin{theorem} \label{th::edgeConk}
Let $n \geq 12$ and $2 \leq k \leq n/2 - 1$ be positive integers and let $f : \mathbb{N} \to \mathbb{N}$ be a function satisfying $f(n) = \omega(\sqrt{n})$. Then w.h.p.
$$
\left(k + 1 - 1/e \right) n - f(n) \leq \tau(\mathcal{C}_{2k+1}, n) \leq \left(k + 1 - 1/e \right) n + f(n) + 2k,
$$
where the upper bound holds under the additional assumption that $k = o(n)$.
\end{theorem}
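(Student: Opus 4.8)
The proof follows the template of Theorem~\ref{th::minDegk}(ii); the only genuinely new ingredient is that the regular graph Builder constructs in the first stage must, in addition, have all of its minimum edge cuts trivial. The lower bound needs no new work: every $(2k+1)$-edge-connected graph has minimum degree at least $2k+1$, so $\mathcal{C}_{2k+1} \subseteq \mathcal{D}_{2k+1}$ and hence $\tau(\mathcal{C}_{2k+1}, n) \geq \tau(\mathcal{D}_{2k+1}, n)$. Since $2k+1 \leq n-1$ is odd, the lower bound in Theorem~\ref{th::minDegk}(ii), which holds with no assumption on the growth of $d$, gives that w.h.p. $\tau(\mathcal{D}_{2k+1}, n) \geq (k+1-1/e)n - f(n)$, and the claimed lower bound follows at once.

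For the upper bound, Builder uses a two-stage strategy, exactly as in the proof of Theorem~\ref{th::minDegk}(ii). Fix a $2k$-regular graph $G$ on $[n]$ which is \emph{super-edge-connected}: $G$ has edge-connectivity $2k$, and every edge cut of $G$ of size $2k$ isolates a single vertex; for all $n \geq 12$ and $2 \leq k \leq n/2-1$ such a graph exists (see the next paragraph). In the first stage Builder constructs $G$, which takes exactly $kn$ rounds by Corollary~\ref{cor::evenRegular}. The rounds after round $kn$ use the fresh, mutually independent permutations $\pi_{k+1}, \pi_{k+2}, \ldots$ and hence constitute a fresh, independent copy of the process; so in the second stage Builder runs the strategy of Lemma~\ref{lem::edgeDisjoint} with this $G$, whose maximum degree equals $2k = o(n)$, and w.h.p. within a further $(1-1/e)n + f(n) + 2k$ rounds he produces a graph $H$ of positive minimum degree that is edge disjoint from $G$. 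It then remains to check that $G \cup H$ — a spanning subgraph of Builder's multigraph, hence enough to win the game — is $(2k+1)$-edge-connected: if $(S,\overline S)$ is an edge cut with $S=\{v\}$ then it has size $d_{G\cup H}(v)=2k+d_H(v)\ge 2k+1$ since $H$ has positive minimum degree, while if $|S|\ge 2$ then $(S,\overline S)$ is a non-trivial edge cut of $G$, and hence already has at least $2k+1$ edges of $G$ by super-edge-connectivity. Thus w.h.p. Builder wins within $kn+(1-1/e)n+f(n)+2k=(k+1-1/e)n+f(n)+2k$ rounds.

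The one step that requires real care is the existence of a $2k$-regular super-edge-connected graph on $[n]$ in the stated range. I would take $G$ to be the circulant $C_n(1,2,\ldots,k)$ (the $k$-th power of the $n$-cycle), which is $2k$-regular since $2k<n$; the task is then to show that for $k\ge 2$ none of its minimum edge cuts is non-trivial. Using the fact (derivable by a standard compression argument, or via the known edge-isoperimetric inequality for powers of cycles) that the optimal vertex sets are intervals, this reduces to the elementary computation that an interval of $\ell$ vertices with $2\le\ell\le n-2$ sends out at least $4k-2>2k$ edges when $k\ge 2$, the minimum being attained by a pair of consecutive vertices. (Alternatively, and enough for mere existence, one may invoke that a random $2k$-regular graph is super-edge-connected with high probability, which applies here since $2k\ge 4$.) All remaining estimates are identical to those in the proof of Theorem~\ref{th::minDegk}(ii); I expect this verification that the base graph has no non-trivial minimum edge cut to be the crux of the argument, the rest being essentially a transcription of the minimum-degree case.
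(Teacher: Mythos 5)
Your overall strategy and the paper's coincide almost exactly: the lower bound is the same one-line reduction to Theorem~\ref{th::minDegk}(ii) via $\mathcal{C}_{2k+1} \subseteq \mathcal{D}_{2k+1}$, and the upper bound is the same two-stage plan in which Builder first constructs a fixed $2k$-regular graph $G$ whose only edge cuts of size at most $2k$ are the trivial ones, and then invokes Lemma~\ref{lem::edgeDisjoint} to append an edge-disjoint positive-minimum-degree graph $H$; the final ``$G \cup H$ is $(2k+1)$-edge-connected'' argument is also identical (isolated vertices have degree $\geq 2k+1$ since $\delta(H) \geq 1$, and non-trivial cuts already have $\geq 2k+1$ edges of $G$). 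The one place you and the paper diverge is the choice of base graph: you propose the circulant $C_n(1,\ldots,k)$ and want to show it is super-edge-connected, whereas the paper uses the graph $G_{2k-1}$ from~\cite{FH} and proves the corresponding property from scratch in Claim~\ref{cl::fewSmallCuts} (every set $A$ with $2 \leq |A| \leq n/2$ has $e_{G_t}(A, V(G_t)\setminus A) \geq t+2$), packaged as Corollary~\ref{cor::edgeConPlusOne}.

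There is a genuine gap in what you wrote, precisely at the step you yourself flag as the crux. You reduce to the statement that intervals minimize the edge-boundary in $C_n(1,\ldots,k)$, citing ``a standard compression argument, or via the known edge-isoperimetric inequality for powers of cycles,'' and then do a calculation only for intervals. Edge-isoperimetry for circulants is not a freely quotable fact: unlike vertex-isoperimetry on $C_n(1,\ldots,k)$ (where intervals being extremal is standard), the edge version requires a genuine argument, and I am not aware of a canonical reference covering the full range $2 \leq k \leq n/2 - 1$. Super-edge-connectivity of $C_n(1,\ldots,k)$ for $k \geq 2$ is true and does appear in the circulant-connectivity literature, but you would need to either cite such a result precisely or supply a self-contained proof, exactly as the paper does with Claim~\ref{cl::fewSmallCuts} for its $G_t$. (Your alternative suggestion, that random $2k$-regular graphs are w.h.p.\ super-edge-connected, is valid for existence but also needs a citation and pushes the heavy lifting elsewhere.) So the proposal is a correct blueprint, and its only difference from the paper is cosmetic (choice of base graph), but as written the central verification is asserted rather than proved; you should either adopt the paper's $G_t$ together with Claim~\ref{cl::fewSmallCuts}, or give an honest proof that all non-trivial edge cuts of $C_n(1,\ldots,k)$ have size at least $2k+1$.
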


\begin{remark} \label{rem::missingValues}
Note that for $r=3$ we know only that  
$$
\left(2 - 1/e \right) n - f(n) \leq \tau(\mathcal{D}_3, n) \leq \tau(\mathcal{C}_3, n) \leq \tau(\mathcal{C}_4, n) = 2n.
$$
Similarly, if $r = \Theta(n)$ is odd, then we know only that  
$$
\left((r+1)/2 - 1/e \right) n - f(n) \leq \tau(\mathcal{D}_r, n) \leq \tau(\mathcal{C}_r, n) \leq \tau(\mathcal{C}_{r+1}, n) = (r+1) n/2.
$$
\end{remark}

In the proof of Theorem~\ref{th::edgeConk} we will make use of the following construction which was introduced in~\cite{FH} (in fact, a much larger family of such graphs was considered there). For every positive integer $n$ and every integer $2 \leq t \leq n/3$ for which $m := n/t$ is an integer, let $G_t$ be the graph on $n$ vertices which is defined as follows. Its vertex set is $V_1 \cup \ldots \cup V_t$, where $V_i = \left\{u_1^i, \ldots, u_m^i \right\}$ for every $1 \leq i \leq t$. Its edge set is $E_1 \cup \ldots \cup E_t \cup E'$, where $E' = \left\{u_s^i u_s^j : 1 \leq s \leq m \textrm{ and } 1 \leq i < j \leq t \right\}$, and $E_i = \left\{u_1^i u_2^i, \ldots, u_{m-1}^i u_m^i, u_m^i u_1^i \right\}$ for every $1 \leq i \leq t$. It is easy to see that $G_t$ is $(t+1)$-regular, and it was proved in~\cite{FH} that it is also $(t+1)$-vertex-connected. Here we prove that it is ``highly'' edge-connected as well.  
\begin{claim} \label{cl::fewSmallCuts}
Let $n \geq 12$ and let $A \subseteq V(G_t)$ be a set of size $2 \leq |A| \leq n/2$. Then $e_{G_t}(A, V(G_t) \setminus A) \geq t+2$.  
\end{claim}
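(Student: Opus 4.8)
The plan is to decompose the edge cut between $A$ and $B := V(G_t) \setminus A$ according to the two kinds of edges of $G_t$. For $1 \le i \le t$ and $1 \le s \le m$, set $a_i := |A \cap V_i|$ and $\alpha_s := |\{i : u_s^i \in A\}|$. Since $E'$ is a vertex-disjoint union of $m$ cliques $K_t$ (one per row $s$) and $E_1 \cup \dots \cup E_t$ is a vertex-disjoint union of $t$ cycles $C_m$ (one per column $i$), we get
$$
e_{G_t}(A, B) = \sum_{s=1}^{m} \alpha_s(t - \alpha_s) + \sum_{i=1}^{t} c_i ,
$$
where $c_i$ denotes the number of edges of $E_i$ crossing the cut; clearly $c_i = 0$ if $a_i \in \{0, m\}$, while $c_i \ge 2$ if $0 < a_i < m$ (deleting one edge keeps a cycle connected, and $m = n/t \ge 3$). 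Call a row \emph{split} if $0 < \alpha_s < t$ and a column \emph{mixed} if $0 < a_i < m$, and let $p$ and $\mu$ be the numbers of split rows and mixed columns. Using $x(t-x) \ge t-1$ for every integer $x$ with $1 \le x \le t-1$, the identity above yields the crude bound
$$
e_{G_t}(A, B) \ \ge \ p(t-1) + 2\mu . \qquad (\star)
$$

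Next I would record three structural facts, all of which exploit $2 \le |A| \le n/2$. (i) If $\mu = 0$ then $A$ is a nonempty proper union of columns $V_i$; writing $f$ for their number we have $1 \le f \le t-1$ and $\alpha_s = f$ for every $s$, so \emph{every} row is split and $p = m \ge 3$. (ii) If $p = 0$ then $A$ is a nonempty proper union of rows, so every column is mixed and $\mu = t$. (iii) The combination $p = 1$ and $\mu = 1$ cannot occur: if $V_{i_0}$ is the unique mixed column and $f$ of the other $t-1$ columns are full, then $\alpha_s \in \{f, f+1\}$ for every $s$; now $1 \le f \le t-2$ forces every row to be split (so $p = m \ge 3$), $f = 0$ forces $A \subseteq V_{i_0}$ and hence $|A| = a_{i_0} = p = 1 < 2$, and $f = t-1$ forces $|A| = (t-1)m + a_{i_0} > n/2$ — each a contradiction. (In fact $f \le t/2$ always, since the full columns already contribute $fm \le |A| \le n/2$.)

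It then remains to check $p(t-1) + 2\mu \ge t+2$ in all cases. If $p = 0$ then $\mu = t$ by (ii), so $(\star)$ gives $2t \ge t+2$. If $p = 1$ then $\mu \ne 0$ by (i) and $\mu \ne 1$ by (iii), so $\mu \ge 2$ and $(\star)$ gives $t+3$. If $p = 2$ then $\mu \ge 1$ by (i), so $(\star)$ gives $2t$. If $p \ge 3$ and $t \ge 3$ then $(\star)$ gives $3(t-1) \ge t+2$. The only remaining case is $t = 2$ and $p \ge 3$: if $\mu \ge 1$ then $(\star)$ gives $p + 2\mu \ge 5$, and if $\mu = 0$ then $p = m = n/2 \ge 6$ by (i); either way the cut has at least $4 = t+2$ edges.

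The main obstacle is fact (iii): it is precisely the configurations with one split row and one mixed column that make the crude bound $(\star)$ fall one short of $t+2$, and ruling them out is where both halves of the hypothesis $2 \le |A| \le n/2$ are genuinely needed. The remaining bookkeeping — in particular the small cases $t \in \{2,3\}$, where $3(t-1)$ can be smaller than $t+2$ — is routine once $n \ge 12$ (equivalently $m \ge 3$ in general, and $m = n/2 \ge 6$ when $t = 2$) is used, and the clique/cycle decomposition reduces everything else to a short computation.
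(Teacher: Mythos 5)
Your proof is correct, and it takes a genuinely different route from the paper's. The paper sorts the column intersections $|X_1|\le\cdots\le|X_t|$ (where $X_i = A\cap V_i$) and argues in three cases: if the largest column is entirely in $A$, every vertex of $V(G_t)\setminus A$ contributes a cut edge, giving at least $n/2\ge t+2$; if only one column meets $A$, two witness vertices of $X_t$ already yield $\ge 2t$ cut edges; and in the general case (no full column, at least two nonempty ones), a direct count gives $\ge (r+1)(t-r+1)\ge 2t$, where $r$ is the smallest index with $X_r\ne\emptyset$. You instead exploit the clean decomposition $E(G_t) = \bigl(\text{$m$ disjoint row-cliques }K_t\bigr)\cup\bigl(\text{$t$ disjoint column-cycles }C_m\bigr)$, derive the uniform lower bound $e(A,B)\ge p(t-1)+2\mu$ from split rows and mixed columns, and then show this is $\ge t+2$ by ruling out the unique near-critical configuration $p=\mu=1$ — which is exactly where both ends of the hypothesis $2\le|A|\le n/2$ are needed. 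The paper's argument is a bit shorter and hinges on choosing a good witness vertex or column; yours makes the edge-counting fully systematic and isolates precisely which configuration would violate the bound, at the cost of slightly more bookkeeping (including the small case $t=2$). Both correctly locate the role of $n\ge 12$ (equivalently $m\ge 3$, and $m\ge 6$ when $t=2$).
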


\begin{proof}
Fix an arbitrary set $A \subseteq V(G_t)$ of size $2 \leq |A| \leq n/2$. For every $1 \leq i \leq t$, let $X_i = A \cap V_i$ and assume without loss of generality that $|X_1| \leq \ldots \leq |X_t|$. Assume first that $X_t = V_t$, and observe that
$$
e_{G_t}(A, V(G_t) \setminus A) \geq |N_{G_t}(V_t) \setminus A| = |(V_1 \cup \ldots \cup V_t) \setminus A| \geq n/2 \geq t+2,
$$
where the second inequality holds since $|A| \leq n/2$ and the last inequality holds since $t \leq n/3$ and $n \geq 12$. Assume then that $X_t \subsetneq V_t$. Assume further that $X_{t-1} = \emptyset$ and thus $X_i = \emptyset$ for every $1 \leq i \leq t-1$, implying that $|X_t| = |A| \geq 2$. Let $x, y \in X_t$ be two arbitrary vertices for which $e_{G_t}(\{x,y\}, V_t \setminus A) \geq 2$ holds; such vertices exist since $X_t \subsetneq V_t$. Then
$$
e_{G_t}(A, V(G_t) \setminus A) \geq \sum_{i=1}^t e_{G_t}(\{x,y\}, V_i \setminus A) \geq 2 t \geq t+2,
$$
where the second inequality holds since $X_i = \emptyset$ for every $1 \leq i \leq t-1$ and since $e_{G_t}(\{x,y\}, V_t \setminus A) \geq 2$ holds by our choice of $x$ and $y$, and the last inequality holds since $t \geq 2$.

Let $r$ be the smallest integer for which $X_r \neq \emptyset$. Given the cases that were already considered, we can assume that $r \leq t-1$ and that $X_i \subsetneq V_i$ for every $1 \leq i \leq t$. Since $X_1 = \ldots = X_{r-1} = \emptyset$, it readily follows that $|N_{G_t}(x, V_i \setminus A)| = 1$ holds for every $x \in X_r \cup \ldots \cup X_t$ and every $1 \leq i \leq r-1$. Moreover, since $\emptyset \neq X_i \subsetneq V_i$ for every $r \leq i \leq t$, it readily follows that $e_{G_t}(X_i, V_i \setminus A) \geq 2$ holds for every $r \leq i \leq t$. We conclude that 
\begin{align*}
e_{G_t}(A, V(G_t) \setminus A) &= \sum_{i = r}^t \sum_{j=1}^{r-1} e_{G_t}(X_i, V_j) + \sum_{i = r}^t e_{G_t}(X_i, V_i \setminus A)  \geq (r-1) (t-r+1) + 2 (t-r+1) \\
&= (r+1)(t-r+1) \geq 2t \geq t+2,
\end{align*}
where the penultimate inequality holds since $1 \leq r \leq t-1$ and the last inequality holds since $t \geq 2$.
\end{proof}  

The following result is an immediate corollary of Claim~\ref{cl::fewSmallCuts}.
 
\begin{corollary} \label{cor::edgeConPlusOne}
Let $H$ be a graph with positive minimum degree such that $V(H) = V(G_t)$ and $E(H) \cap E(G_t) = \emptyset$. Then $G_t \cup H$ is $(t+2)$-edge-connected.
\end{corollary}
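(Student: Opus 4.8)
The plan is to verify the standard edge-cut criterion: $G_t \cup H$ is $(t+2)$-edge-connected if and only if $e_{G_t \cup H}(A, V(G_t) \setminus A) \geq t+2$ for every non-empty proper subset $A \subsetneq V(G_t)$. Since $V(H) = V(G_t)$ and $E(H) \cap E(G_t) = \emptyset$ by hypothesis, the edges of $G_t \cup H$ crossing any cut $(A, \bar{A})$ split as $e_{G_t \cup H}(A, \bar{A}) = e_{G_t}(A, \bar{A}) + e_H(A, \bar{A})$, so it suffices to bound the $G_t$-contribution from below, with the $H$-edges only helping. The argument then splits according to $|A|$.

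Replacing $A$ by its complement if necessary, I may assume $|A| \leq n/2$. If $|A| = 1$, say $A = \{v\}$, then $e_{G_t \cup H}(A, \bar{A}) = d_{G_t}(v) + d_H(v) \geq (t+1) + 1 = t+2$, using that $G_t$ is $(t+1)$-regular (as recalled from~\cite{FH}) together with the assumption that $H$ has positive minimum degree, so $d_H(v) \geq 1$. If instead $2 \leq |A| \leq n/2$, then Claim~\ref{cl::fewSmallCuts} gives directly $e_{G_t}(A, \bar{A}) \geq t+2$, and hence $e_{G_t \cup H}(A, \bar{A}) \geq t+2$ as well.

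Since every edge cut of $G_t \cup H$ therefore has size at least $t+2$, the graph is $(t+2)$-edge-connected; one only has to note in passing that $G_t \cup H$ has at least two vertices, which is immediate from $n = tm \geq 3t \geq 6$. There is essentially no obstacle in this corollary: all of the substantive work has already been carried out in Claim~\ref{cl::fewSmallCuts}, and the only mild point requiring attention is the singleton case $|A| = 1$, which that claim does not cover and which is precisely where the positive-minimum-degree hypothesis on $H$ enters.
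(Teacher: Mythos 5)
Your proof is correct and takes essentially the same approach as the paper: split on $|A|$ (after reducing to $|A| \leq n/2$), handle $|A|=1$ via the degree bound $\delta(G_t \cup H) \geq (t+1)+1 = t+2$, and invoke Claim~\ref{cl::fewSmallCuts} for $2 \leq |A| \leq n/2$.
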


\begin{proof}
Denote $\Gamma = G_t \cup H$. We need to prove that $e_{\Gamma}(A, V(G_t) \setminus A) \geq t+2$ holds for every $A \subseteq V(G_t)$ of size $1 \leq |A| \leq n/2$; fix such a set $A$. If $|A| = 1$, then 
$$
e_{\Gamma}(A, V(G_t) \setminus A) \geq \delta(\Gamma) \geq t+2.
$$ 
If on the other hand $2 \leq |A| \leq n/2$, then $e_{\Gamma}(A, V(G_t) \setminus A) \geq e_{G_t}(A, V(G_t) \setminus A)\geq t+2$ holds by Claim~\ref{cl::fewSmallCuts}.
\end{proof}

Now that we have Corollary~\ref{cor::edgeConPlusOne} at hand, the proof of Theorem~\ref{th::edgeConk} is fairly sraightforward. 

\begin{proof} [Proof of Theorem~\ref{th::edgeConk}]
Since, clearly, $\tau(\mathcal{C}_d, n) \geq \tau(\mathcal{D}_d, n)$ for every $d$, the lower bound follows immediately from Theorem~\ref{th::minDegk}(ii). In order to prove the upper bound, we need to show that Builder has a strategy which w.h.p. enables him to build a $(2k+1)$-edge-connected graph on $n$ vertices within $\left(k + 1 - 1/e \right) n + f(n) + 2k$ rounds. Builder proceeds as follows. In the first $k n$ rounds, he builds a copy of $G_{2k-1}$; since the latter graph is $2k$-regular, this is possible by Corollary~\ref{cor::evenRegular}. 
He then builds a graph $H$ such that $E(H) \cap E(G_{2k-1}) = \emptyset$ and $\delta(H) \geq 1$. 
By Lemma \ref{lem::edgeDisjoint}, this can be done w.h.p. within $\left(1 - 1/e \right) n + f(n) + 2k$ additional rounds.
This concludes the proof of the theorem as $G_{2k-1} \cup H$ is $(2k+1)$-edge-connected by Corollary~\ref{cor::edgeConPlusOne}.    
\end{proof}

\section{Concluding remarks and open problems} \label{sec::openProbs}

In this paper we have studied a no-replacement variant of the semi-random graph process. We suggest a few related open problems for future research.

\medskip

\noindent \textbf{Labeled vs. Unlabeled.} As noted in the introduction, $\tau({\mathcal F}, n) \leq \tau_{\text{lab}}({\mathcal F}, n)$ holds for every family ${\mathcal F}$ of $n$-vertex graphs. 
We have proved that $\tau({\mathcal F}, n) = \tau_{\text{lab}}({\mathcal F}, n)$ for several such families (e.g., when ${\mathcal F}$ consists of a single regular graph of even degree and when ${\mathcal F}$ is a perfect matching). 
It would be interesting to decide whether there exists a (natural) family ${\mathcal F}$ of $n$-vertex graphs for which the gap between $\tau_{\text{lab}}({\mathcal F}, n)$ and $\tau({\mathcal F}, n)$ is substantial, say $\tau_{\text{lab}}({\mathcal F}, n) - \tau({\mathcal F}, n) = \Omega(n)$. In particular, it would be interesting to decide whether there exists an $n$-vertex regular graph of odd degree $G$ such that $\tau_{\text{lab}}(G, n) - \tau(G, n) = \Omega(n)$; recall that we have proved that $\tau_{\text{lab}}(G, n) - \tau(G, n) \leq (1/e + o(1)) n$ holds for all such graphs. While it seems quite plausible that such graph families exist, the only result we have which demonstrates that $\tau({\mathcal F}, n) < \tau_{\text{lab}}({\mathcal F}, n)$ might hold, is a tiny gap for paths. Indeed, while $\tau(P_n, n) = n-1$, the probability that $\tau_{\text{lab}}(P_n, n) = n-1$ tends to $0$ as $n$ tends to infinity.        

\medskip

\noindent \textbf{Trees.} As noted in Section~\ref{sec::trees}, the most natural and interesting question concerning trees is to determine $\Pr(\tau(T, n) = n-1)$ and $\Pr(\tau_{\text{lab}}(T, n) = n-1)$ for every tree $T$. We have proved some partial related results. In particular, we have shown that $\Pr(\tau(T, n) = n-1) = 1$ if and only if $T \cong P_n$. This implies that $P_n$ is the ``best'' tree in the sense that $\Pr(\tau(T, n) = n-1) < \Pr(\tau(P_n, n) = n-1)$ for every $n$-vertex tree $T \neq P_n$. We believe that the star $K_{1,n-1}$ is the ``worst'' tree. That is, that $\Pr(\tau(T, n) = n-1) > \Pr(\tau(K_{1,n-1}, n) = n-1)$ holds for every $n$-vertex tree $T \neq K_{1,n-1}$. 
As we saw, the situation is reversed for labeled trees, that is, $\Pr(\tau_{\text{lab}}(P_n, n) = n-1) < \Pr(\tau_{\text{lab}}(K_{1,n-1}, n) = n-1)$. It would be interesting to determine whether these are still the extremal cases, that is, whether $\Pr(\tau_{\text{lab}}(P_n, n) = n-1) < \Pr(\tau_{\text{lab}}(T, n) = n-1) < \Pr(\tau_{\text{lab}}(K_{1,n-1}, n) = n-1)$ for every $n$-vertex tree $T \notin \{P_n, K_{1,n-1}\}$.     

\subsection*{Acknowledgements} 
We are grateful to the anonymous referee for making several very helpful suggestions.

\end{document}